\newtheorem{lemma}{Lemma}[section]
\newtheorem{proposition}[lemma]{Proposition}
\newtheorem{corollary}[lemma]{Corollary}
\newtheorem{remark}[lemma]{Remark}
\newtheorem{theorem}[lemma]{Theorem}
\newtheorem{example}[lemma]{Example}
\begin{document}

\title[Tarski monoids]{Tarski monoids: \\Matui's spatial realization theorem}

\author{Mark V. Lawson}

\address{Department of Mathematics
and the
Maxwell Institute for Mathematical Sciences,
Heriot-Watt University,
Riccarton,
Edinburgh~EH14~4AS,
United Kingdom}  
\email{m.v.lawson@hw.ac.uk }
\thanks{The author was partially supported by an EPSRC grant  (EP/I033203/1).
He is grateful to Collin Bleak and the other organizers for inviting him to the
{\em Workshop on the extended family of R. Thompson's groups} held at St. Andrews in May 2014
where the preliminary results of this paper were first presented.}

\begin{abstract} We introduce a class of inverse monoids, called Tarski monoids, that can be regarded as non-commutative generalizations of the unique countable, atomless Boolean algebra.
These inverse monoids are related to a class of \'etale topological groupoids
under a non-commutative generalization of classical Stone duality
and, significantly, they arise naturally in the theory of dynamical systems as developed by Matui.
We are thereby able to reinterpret a theorem of Matui (\`a la Rubin) on a class of \'etale groupoids
as an equivalent theorem about a class of Tarski monoids:
two simple Tarski monoids are isomorphic if and only if their groups of units are isomorphic.
The inverse monoids in question may also be viewed as countably infinite generalizations of finite symmetric inverse monoids.
Their groups of units therefore generalize the finite symmetric groups and include amongst their number the Thompson groups $V_{n}$.
 \end{abstract}

\keywords{Inverse semigroups, \'etale topological groupoids, Stone duality}

\subjclass{20M18, 18B40, 06E15}

\maketitle

\section{Introduction}

It is a theorem of Tarski \cite[Chapter 16]{GH} that any two countably infinite atomless Boolean algebras are isomorphic.
For the purposes of this paper, it is convenient to refer to such a Boolean algebra as the {\em Tarski algebra}.
Under classical Stone duality, the Tarski algebra corresponds to the {\em Cantor space} this time a topological structure with its own uniqueness property.
The Tarski algebra has non-commutative generalizations to what we call {\em Tarski monoids}, a class of inverse monoids,
and under a non-commutative generalization of Stone duality these Tarski monoids are paired with a class of \'etale topological groupoids whose spaces of identities are Cantor spaces;
specifically, the groupoids studied by Matui \cite{Matui13}.
We shall call these {\em Cantor groupoids}.
In his paper, Matui associated a group with each Cantor groupoid called its {\em topological full group}
and proved that when the groupoid is minimal and effective then this group is in fact a complete invariant.
However, the topological full group of a Cantor monoid is nothing other than the group of units of its associated Tarski monoid
whereas minimal and effective translate exactly into the Tarski monoid being simple (in an appropriate algebraic sense).
It follows that for simple Tarski monoids their groups of units are a complete invariant.
The goal of this paper is to prove this result algebraically.
Section~2 recalls the definitions and Section~3 gives the proof.

\section{Tarski monoids}

The goal of this section is essentially descriptive and will draw upon previously published results due to the author and his collaborators.
The theorem proved in this paper is set out at the end of this section and its proof is given in Section~3.

We begin with the \'etale groupoids studied by Matui \cite{Matui13}.
The best introduction to \'etale (topological) groupoids is \cite{Res1} and we refer the reader there for more information.
Matui is interested in his paper in Hausdorff, second countable, locally compact \'etale topological groupoids but his main results
also assume that the identity space of the groupoid is the Cantor space.
For the purposes of this paper, it is convenient to call such a groupoid a {\em Cantor groupoid}.
The significance of \'etale topological groupoids is that they can be viewed as `non-commutative' topological spaces.
This idea was spelt out by Kumjian \cite{Kumjian} and lies behind Renault's influential approach to constructing $C^{\ast}$-algebras
from such groupoids \cite{Renault}.
Thus Cantor groupoids should be viewed as non-commutative generalizations of the Cantor space.

We now turn to the corresponding non-commutative analogues of the Tarski algebra.
These will turn out to be special kinds of inverse semigroups.
For background in inverse semigroup theory, we refer the reader to \cite{Lawson98}.
If $S$ is a monoid, its group of units is denoted by $\mathsf{U}(S)$; the groups of units of Tarski monoids play an important r\^ole in Matui's work, as we shall see.
If $e$ is an idempotent then $eSe$ is called a {\em local monoid}.
A semigroup $S$ is {\em inverse} if for each $s \in S$ there exists a unique element $s^{-1}$ such that the following two equations hold:
$s = ss^{-1}s$ and $s^{-1} = s^{-1}ss^{-1}$.
We define $\mathbf{d}(s) = s^{-1}s$ and $\mathbf{r}(s) = ss^{-1}$.
This leads to the following diagrammatic way of representing the elements of an inverse semigroup
$$\mathbf{d}(s) \stackrel{s}{\longrightarrow} \mathbf{r}(s).$$
If we restrict the product in the semigroup to those pairs $(s,t)$ where $\mathbf{d}(s) = \mathbf{r}(t)$ we get the {\em restricted product}.
There are a number of important first consequences of the definition of an inverse semigroup, the most important of which \cite[Theorem 1.1.3]{Lawson98}  is that the set of idempotents of $S$,
denoted by $\mathsf{E}(S)$, forms a commutative subsemigroup.\footnote{For the benefit of any ring-theorists reading, it does not follow that the idempotents are therefore central.}
The order used in an inverse semigroup will always be the {\em natural partial order} defined by $a \leq b$ if and only if $a = ba^{-1}a$ if and only if $a = aa^{-1}b$.
If $X \subseteq S$ then $X^{\uparrow} = \{ s\in S \colon x \leq s \mbox{ some } x \in X\}$ and $X^{\downarrow} = \{s \in S \colon s \leq x \mbox{ some } x \in X\}$.
If $X = X^{\downarrow}$ we say that $X$ is an {\em order ideal}.
The {\em compatibility relation} in an inverse monoid is denoted by $\sim$ and defined by $s \sim t$ if and only if $s^{-1}t,st^{-1} \in \mathsf{E}(S)$.
The {\em orthogonality relation} is defined by $s \perp t$ if and only if $s^{-1}t = 0 = st^{-1}$.
A finite subset is said to be {\em compatible} (resp. {\em orthogonal}) if each pair of distinct elements from the set is compatible (resp. orthogonal).

An inverse monoid is said to be {\em Boolean} if its semilattice of idempotents is a Boolean algebra with respect to the natural partial order,
if all binary compatible joins of elements exist and if multiplication distributes over such binary compatible joins.
Boolean inverse monoids are an important class of inverse monoids \cite{KLLR,Lawson10b,Lawson12,Lawson16,LL,LS,Wehrung}.
An inverse monoid is a {\em $\wedge$-monoid} if each pair of elements has a meet.\\

\noindent
{\bf Definition. }A {\em Tarski monoid} is a countably infinite, Boolean inverse $\wedge$-monoid.\\

It is a theorem \cite[Corollary~4.5]{Lawson16} that the Boolean algebra of idempotents of a Tarski monoid is the Tarski algebra.
A monoid homomorphism between Boolean inverse monoids $S$ and $T$ is said to be {\em additive} if it
preserves compatible joins and induces a map of Boolean algebras between $\mathsf{E}(S)$ and $\mathsf{E}(T)$.
The following basic result from Boolean algebra will be used a number of times.

\begin{lemma}\label{lem:basic} 
In a Boolean algebra, we have that $e \leq f$ if and only if $e\bar{f} = 0$.
\end{lemma}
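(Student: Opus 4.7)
The plan is to reduce everything to the standard algebraic identities in a Boolean algebra: the characterization of the order $e \leq f$ as $e = ef$ (equivalently $e = e \wedge f$), the complement laws $f \wedge \bar{f} = 0$ and $f \vee \bar{f} = 1$, and distributivity of meet over join.

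For the forward direction, I would start with $e \leq f$, rewrite this as $e = ef$, then compute
\[
e\bar{f} = (ef)\bar{f} = e(f\bar{f}) = e \cdot 0 = 0,
\]
using associativity of meet and the complement law $f\bar{f} = 0$.

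For the converse, assume $e\bar{f} = 0$. The key trick is to expand $e$ against the identity $1 = f \vee \bar{f}$:
\[
e = e \cdot 1 = e(f \vee \bar{f}) = ef \vee e\bar{f} = ef \vee 0 = ef,
\]
so $e \leq f$.

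There is no real obstacle here; the only thing to be mindful of is notational consistency between the multiplicative notation (juxtaposition for meet) used in the semigroup context of the paper and the lattice-theoretic notation, but the paper has already declared that idempotents in the Boolean inverse monoid form a Boolean algebra under the natural partial order with meet given by the semigroup product, so the proof is literally the two displays above.
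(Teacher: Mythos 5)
Your proof is correct and is exactly the standard Boolean-algebra argument; the paper itself states this lemma without proof as a basic fact, and your two displays (using $e\leq f \Leftrightarrow e=ef$, the complement laws, and distributivity) are precisely the canonical justification.
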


Let $S$ be a Boolean inverse monoid.
If $X \subseteq S$ denote by $X^{\vee}$ the set of all joins of finite compatible subsets of $S$.
A subset $X$ is said to be {\em $\vee$-closed} if $X = X^{\vee}$.
A Boolean inverse monoid is said to be {\em $0$-simplifying} if there are no non-trivial $\vee$-closed ideals.
Let $e$ and $f$ be non-zero idempotents in a Boolean inverse monoid.
Define $e \preceq f$ if there is a set $X = \{x_{1}, \ldots, x_{m} \}$ such that $e = \bigvee_{i=1}^{m} \mathbf{d}(x_{i})$ and $\mathbf{r}(x_{i}) \leq f$
for $1 \leq i \leq m$.
The set $X$ is called a {\em pencil} from $e$ to $f$.
Define $e \equiv f$ if $e \preceq f$ and $f \preceq e$.
The following was proved as \cite[Lemma~7.8]{Lenz}.

\begin{lemma}\label{lem:darwin} Let $S$ be a Boolean inverse monoid.
Then $\equiv$ is the universal relation on the set of non-zero idempotents if and only if $S$ is $0$-simplifying.
\end{lemma}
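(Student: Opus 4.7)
The natural approach is to associate, with each non-zero idempotent $e$, the $\vee$-closed ideal $I_e$ it ``generates'' via pencils, and then to play this construction off against the hypothesis of $0$-simplifying in each direction.

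For $(\Rightarrow)$, suppose $\equiv$ is universal on non-zero idempotents and let $I$ be a non-zero $\vee$-closed ideal. Pick any $0 \neq s \in I$, so that $e := \mathbf{d}(s) = s^{-1}s \in I$ is a non-zero idempotent. Given an arbitrary non-zero idempotent $f$, the hypothesis yields a pencil $\{x_{1}, \ldots, x_{m}\}$ with $f = \bigvee_{i} \mathbf{d}(x_{i})$ and $\mathbf{r}(x_{i}) \leq e$. The condition $\mathbf{r}(x_{i}) \leq e$ gives $x_{i} = \mathbf{r}(x_{i}) x_{i} = e x_{i} \in I$, hence $\mathbf{d}(x_{i}) \in I$, and $\vee$-closure then puts $f$ in $I$. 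Any $t \in S$ satisfies $t = t\mathbf{d}(t)$, so $I$ contains every element of $S$.

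For $(\Leftarrow)$, I would fix a non-zero idempotent $e$ and define
$$I_{e} = \{ s \in S : \mathbf{d}(s) \preceq e \}.$$
The goal is to show $I_{e}$ is a non-zero $\vee$-closed ideal; $0$-simplifying then forces $I_{e} = S$, which says precisely that every non-zero idempotent $f = \mathbf{d}(f)$ satisfies $f \preceq e$, and the symmetric argument gives $e \equiv f$. Three verifications are needed. First, $\preceq$ is downward-closed in its first variable: if $g \leq h$ and $\{x_{i}\}$ witnesses $h \preceq e$, then $\{x_{i} g\}$ witnesses $g \preceq e$, since $\mathbf{d}(x_{i} g) = g \mathbf{d}(x_{i}) g$, $\bigvee_{i} g\mathbf{d}(x_{i})g = ghg = g$, and $\mathbf{r}(x_{i} g) \leq \mathbf{r}(x_{i}) \leq e$; combined with $\mathbf{d}(ts) \leq \mathbf{d}(s)$ this handles closure under left multiplication. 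Second, for right multiplication I transport a pencil $\{x_{i}\}$ witnessing $\mathbf{d}(s) \preceq e$ into the pencil $\{x_{i} t\}$ witnessing $\mathbf{d}(st) = t^{-1}\mathbf{d}(s)t \preceq e$. Third, $\vee$-closure: if $s = s_{1} \vee \cdots \vee s_{k}$ is a compatible join with each $s_{j} \in I_{e}$, then concatenating the pencils supplied by $\mathbf{d}(s_{j}) \preceq e$ yields a pencil for $\mathbf{d}(s) = \bigvee_{j} \mathbf{d}(s_{j}) \preceq e$.

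The main obstacle is the right-multiplication step. One must verify that $\bigvee_{i} \mathbf{d}(x_{i} t) = t^{-1} \bigl( \bigvee_{i} \mathbf{d}(x_{i}) \bigr) t$, which relies on the distributivity of multiplication over compatible joins built into the definition of a Boolean inverse monoid, together with the identity $\mathbf{d}(x_{i} t) = t^{-1}\mathbf{d}(x_{i}) t$ and the estimate $\mathbf{r}(x_{i} t) = x_{i} t t^{-1} x_{i}^{-1} \leq x_{i} x_{i}^{-1} = \mathbf{r}(x_{i}) \leq e$. Once this transport of pencils through right multiplication is in hand, everything else is bookkeeping built from the identities $\mathbf{d}(s) = s^{-1}s$, $s = s \mathbf{d}(s)$, and the trivial observation that $e \in I_{e}$ via the one-element pencil $\{e\}$.
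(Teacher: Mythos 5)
Your proof is correct, and since the paper does not prove this lemma itself but simply cites it from Lenz (Lemma~7.8 there), there is no in-paper argument to diverge from; your route --- the forward direction via pulling any non-zero idempotent into a non-zero $\vee$-closed ideal along a pencil, and the converse via showing $I_{e} = \{ s \in S : \mathbf{d}(s) \preceq e \}$ is a $\vee$-closed ideal containing $e$, using distributivity to transport pencils under left and right multiplication --- is exactly the standard one. The only cosmetic repair needed is to put $0$ into $I_{e}$ explicitly (or to extend $\preceq$ so that $0 \preceq e$ holds vacuously, say via the empty pencil), since the paper defines $\preceq$ only for non-zero idempotents and your ideal must contain the elements $st$ or $ts$ that happen to be zero.
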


An inverse monoid is said to be {\em fundamental} if the only elements that centralize the set of idempotents are themselves idempotents.
The following is well-known.

\begin{lemma}\label{lem:pros}   
In a fundamental inverse semigroup all local monoids are fundamental.
\end{lemma}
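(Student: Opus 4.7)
The plan is to unpack the definition of fundamentality in the local monoid $eSe$ and then promote its hypothesis to one about all of $\mathsf{E}(S)$, using the crucial fact that idempotents in an inverse semigroup commute.

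First I would fix an idempotent $e \in \mathsf{E}(S)$ and describe $\mathsf{E}(eSe)$ concretely. Since $\mathsf{E}(S)$ is a commutative subsemigroup, any product of idempotents with $e$ stays idempotent and lies below $e$, and conversely any idempotent $f \leq e$ satisfies $ef = fe = f$, so $f \in eSe$. Hence $\mathsf{E}(eSe) = \{f \in \mathsf{E}(S) : f \leq e\}$. Next I would take an arbitrary element $s \in eSe$ that centralizes $\mathsf{E}(eSe)$; note that $s = es = se = ese$, so in particular $\mathbf{d}(s), \mathbf{r}(s) \leq e$. The goal is to show $s \in \mathsf{E}(S)$.

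The key step is to show that $s$ in fact centralizes the whole of $\mathsf{E}(S)$, not merely the idempotents below $e$. Given any $g \in \mathsf{E}(S)$, consider the idempotent $ege \in \mathsf{E}(S)$; because idempotents in $S$ commute, $ege = e^{2}g = eg$, and this lies below $e$, hence in $\mathsf{E}(eSe)$. Applying the hypothesis yields $s(ege) = (ege)s$. Using $se = s = es$ together with the commutativity of $e$ and $g$, the left-hand side simplifies to $sg$ and the right-hand side to $gs$. Therefore $sg = gs$ for every $g \in \mathsf{E}(S)$.

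By the fundamentality of $S$, it follows that $s \in \mathsf{E}(S)$, and since $s \in eSe$ already, $s \in \mathsf{E}(eSe)$. This shows $eSe$ is fundamental. There is no real obstacle: the only thing one has to notice is that an arbitrary $g \in \mathsf{E}(S)$ can be replaced, for the purposes of testing commutation with $s \in eSe$, by the idempotent $ege \leq e$ without changing the products $sg$ and $gs$; this in turn relies only on the standard fact that $\mathsf{E}(S)$ is commutative.
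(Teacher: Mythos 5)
Your proof is correct: the computation $s(ege)=sg$ and $(ege)s=gs$ (valid because $es=s=se$ and $\mathsf{E}(S)$ is commutative, so $ege=eg=ge\leq e$ lies in $\mathsf{E}(eSe)$) shows that an element of $eSe$ centralizing $\mathsf{E}(eSe)$ centralizes all of $\mathsf{E}(S)$, and then the paper's definition of fundamental forces $s\in\mathsf{E}(S)\cap eSe=\mathsf{E}(eSe)$. The paper states this lemma as well known and gives no proof, and your argument is exactly the standard direct verification one would supply for the centralizer formulation of fundamentality used here.
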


\noindent
{\bf Definition. }A Boolean inverse monoid is {\em simple} if it is fundamental and $0$-simplifying.\\

It is a theorem \cite[Theorem~4.18]{Lawson12} that the {\em finite} simple  Boolean inverse $\wedge$-monoids are precisely the finite symmetric inverse monoids.
It is for this reason that Tarski monoids are assumed to be countably infinite.
It can be proved that the only additive homomorphisms between simple Boolean inverse monoids are the trivial ones.
It follows that our use of the word `simple' in this context is appropriate.

The Stone space of the Boolean algebra of a Boolean inverse monoid $S$ is called the {\em structure space} of $S$ and is denoted by $\mathsf{X}(S)$.
The elements of the structure space are ultrafilters.
The open sets are of the form $U_{e}$, the set of all ultrafilters containing the idempotent $e$.
We remind the reader that in a Boolean algebra ultrafilters and prime filters are the same \cite{PJ}.

We shall on one occasion need to work with ultrafilters on the Boolean inverse monoid $S$ itself.
The set of such ultrafilters is denoted by $\mathsf{G}(S)$.
It is worth noting that ultrafilters are also prime filters in a suitable sense \cite[Lemma~3.20]{LL}.
These are the basis of non-commutative Stone duality and connect Tarski inverse monoids with Cantor groupoids \cite{Lawson16}.
The set of all ultrafilters containing the element $s$ is $V_{s}$ and, significantly, is a compact set.
Observe that under the assumption of (AC), 
\cite[Lemma~3.11]{LL} implies that in a Boolean inverse monoid $V_{s} \subseteq V_{t}$ if and only if $s \leq t$.
The following result is \cite[Lemma~2.4]{Lawson10b}.

\begin{lemma}\label{lem:exel} Let $A$ be a proper filter in the Boolean inverse $\wedge$-monoid $S$.
Then $A$ is an ultrafilter if and only if $s \wedge t \neq 0$ for all $t \in A$ implies that $s \in A$.
\end{lemma}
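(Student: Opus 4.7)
The plan is to characterize ultrafilters as the maximal proper filters and use the $\wedge$-structure to describe concretely the filter generated by $A$ together with one extra element $s$. Since $S$ is a $\wedge$-monoid, a filter is a nonempty upward-closed subset closed under binary meets, and the filter generated by $A \cup \{s\}$ consists of all $u$ such that $u \geq s \wedge t$ for some $t \in A$ (using that $A$ is already meet-closed to absorb finitely many elements of $A$ into a single one).

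For the forward direction, I would suppose $A$ is an ultrafilter and $s \wedge t \neq 0$ for every $t \in A$, then consider the set $B = \{u \in S : s \wedge t \leq u \text{ for some } t \in A\}$. Upward closure is automatic, and if $s \wedge t_{1} \leq u_{1}$ and $s \wedge t_{2} \leq u_{2}$ then $s \wedge (t_{1} \wedge t_{2}) \leq u_{1} \wedge u_{2}$ with $t_{1} \wedge t_{2} \in A$, so $B$ is meet-closed. The hypothesis on $s$ ensures $0 \notin B$, and picking any $t_{0} \in A$ shows $s \in B$ (via $s \wedge t_{0} \leq s$) and $A \subseteq B$ (via $s \wedge t \leq t$). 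Hence $B$ is a proper filter containing $A$, so by maximality $B = A$, forcing $s \in A$.

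For the converse, I would verify maximality directly: assume the stated condition and let $A'$ be any proper filter with $A \subseteq A'$. For any $s \in A'$ and any $t \in A$, both lie in $A'$, so $s \wedge t \in A'$ because $A'$ is meet-closed; properness of $A'$ then gives $s \wedge t \neq 0$. By hypothesis $s \in A$, so $A' \subseteq A$ and $A = A'$. The main (but minor) obstacle is the very first step — correctly identifying the filter generated by $A \cup \{s\}$ and checking it is meet-closed — everything else is a routine maximality argument. The key role of the $\wedge$-hypothesis is precisely that it makes this generated filter so simple to describe.
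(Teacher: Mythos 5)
Your proof is correct and is essentially the argument of the cited source (the paper itself only quotes this as \cite[Lemma~2.4]{Lawson10b} without reproving it): treating ultrafilters as maximal proper filters, forming the filter $\{u : s \wedge t \leq u \text{ for some } t \in A\}$ generated by $A \cup \{s\}$, and using properness plus maximality, together with the routine maximality check for the converse. No gaps; the meet-closure of $A$ and the $\wedge$-structure are used exactly where they should be.
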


If $A$ is an ultrafilter in the Boolean inverse monoid $S$ define $\mathbf{d}(A) = (A^{-1}A)^{\uparrow}$ and $\mathbf{r}(A) = (AA^{-1})^{\uparrow}$.
Define $A \cdot B = (AB)^{\uparrow}$ if and only if $\mathbf{d}(A) = \mathbf{r}(B)$.
With respect to these operations the set $\mathsf{G}(S)$ is a groupoid with its identities being those ultrafilters that contain idempotents.
The papers \cite{Lawson10b, Lawson12, LL} deal with the theory of non-commutative Stone duality in increasing generality and \cite{Lawson16}
refines the theory in certain cases that are relevant to this paper.

The relationship between the structure of a Tarski monoid and the structure of its group of units is the main theme of this paper.
We therefore need ways of constructing units.
An element $g$ of a group is said to be an {\em involution} if $g \neq 1$ and $g^{2} = 1$.
An {\em infinitesimal} in an inverse monoid is an element $a$ such that $a^{2} = 0$.
The {\em extent} of an element $a$, denoted by $\mathbf{e}(a)$, in a Boolean inverse monoid is the idempotent $\mathbf{d}(a) \vee \mathbf{r}(a)$.

\begin{lemma}\label{lem:spooks} Let $S$ be a Boolean inverse monoid.
\begin{enumerate}

\item Let $s \in S$ such that $s^{-1}s = ss^{-1}$.
Put $e = s^{-1}s$.
Then $g = s \vee \bar{e}$ is invertible.

\item  $a^{2} = 0$ if and only if $a^{-1}a \perp aa^{-1}$  if and only if $a \perp a^{-1}$.

\item If $a^{2} = 0$ then
$$g = a^{-1} \vee a \vee \overline{\mathbf{e}(a)}$$
is an involution above $a$.

\end{enumerate}
\end{lemma}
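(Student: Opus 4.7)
The three parts are all verifications; the work is in setting up the correct orthogonality so that the joins exist and the distributive law can be applied.

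For part (1), my first task is to show that $s$ and $\bar e$ are compatible so that the join $g = s \vee \bar e$ makes sense. Since $\mathbf{d}(s) = \mathbf{r}(s) = e$, we have $s = se = es$, so $s\bar e = s e \bar e = 0$ and $\bar e s = \bar e e s = 0$; hence $s \perp \bar e$ (in particular, they are compatible). Then I would compute
\[
g g^{-1} = (s \vee \bar e)(s^{-1} \vee \bar e)
= s s^{-1} \vee s\bar e \vee \bar e s^{-1} \vee \bar e
= e \vee 0 \vee 0 \vee \bar e = 1,
\]
using the fact that inverses and multiplication distribute over compatible joins; the computation of $g^{-1}g$ is symmetric, so $g \in \mathsf{U}(S)$.

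For part (2), I would unwind the definitions. Since idempotents are self-inverse, $a^{-1}a \perp aa^{-1}$ is the single equation $(a^{-1}a)(aa^{-1}) = 0$, i.e.\ $a^{-1} a^2 a^{-1} = 0$. Sandwiching both sides with $a$ on the left and $a^{-1}$ on the right (resp.\ undoing by multiplying by $a^{-1}$ on the left and $a$ on the right, using $a = a a^{-1}a$) shows this is equivalent to $a^2 = 0$. For the second equivalence, $a \perp a^{-1}$ unwinds to $a \cdot a = 0$ and $a^{-1}\cdot a^{-1} = 0$; but $(a^{-1})^2 = (a^2)^{-1}$, so both equations coincide with $a^2 = 0$.

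For part (3), I first need to show the three summands $a$, $a^{-1}$, $\overline{\mathbf{e}(a)}$ are pairwise orthogonal so the join exists and inverses/products distribute. Part (2) gives $a \perp a^{-1}$. Since $\mathbf{d}(a) \leq \mathbf{e}(a)$ we have $a\,\overline{\mathbf{e}(a)} = a\mathbf{d}(a)\overline{\mathbf{e}(a)} = 0$, and symmetrically $\overline{\mathbf{e}(a)}\,a = 0$ (using $\mathbf{r}(a) \leq \mathbf{e}(a)$); the same argument works with $a$ replaced by $a^{-1}$. Taking inverses termwise, $g^{-1} = a \vee a^{-1} \vee \overline{\mathbf{e}(a)} = g$, so $g^2 = g g^{-1}$. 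Expanding by distributivity gives nine terms: the four cross terms involving $\overline{\mathbf{e}(a)}$ vanish as above, the two terms $aa$ and $a^{-1}a^{-1}$ vanish by part (2), and what remains is
\[
a^{-1}a \vee a a^{-1} \vee \overline{\mathbf{e}(a)} = \mathbf{e}(a) \vee \overline{\mathbf{e}(a)} = 1.
\]
Finally, to see $g$ is genuinely an involution (i.e.\ $g \neq 1$), I would argue that $a \leq g$, so if $g = 1$ then $a \leq 1$, making $a$ idempotent; combined with $a^2 = 0$ this forces $a = 0$, which is the degenerate case. The only mildly delicate point throughout is keeping track of which pairs are orthogonal before invoking distributivity, but once that bookkeeping is done each identity falls out directly.
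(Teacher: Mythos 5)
Your proposal is correct and follows essentially the same route as the paper: part (1) is the same direct computation of $gg^{-1}=g^{-1}g=1$, part (2) is the same unwinding of the orthogonality conditions, and part (3) differs only in that you expand $g^2$ directly rather than setting $s=a\vee a^{-1}$ and invoking part (1), which amounts to the same verification. Your explicit check that $g\neq 1$ (assuming $a\neq 0$) fills in the detail the paper leaves as "straightforward."
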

\begin{proof} (1) Simply observe that $g^{-1}g = gg^{-1} = s^{-1}s \vee \overline{s^{-1}s} = 1$.

(2) If $a^{2} = 0$ then $a^{-1}aaa^{-1} = 0$ and so $a^{-1}a \perp aa^{-1}$.
If  $a^{-1}a \perp aa^{-1}$ then $a^{-1}aaa^{-1} = 0$ and so $a^{2} = 0$.
The equivalence of $a^{-1}a \perp aa^{-1}$  with $a \perp a^{-1}$ is immediate.

(3) The elements $a$ and $a^{-1}$ are orthogonal.
Put $s = a \vee a^{-1}$.
Then $s^{-1}s = ss^{-1}$.
Now apply part (1).
It is straightforward to check that it is an involution.
\end{proof}

\noindent
{\bf Definition.} A {\em special involution} is one constructed as in part (3) of Lemma~\ref{lem:spooks}.\\

The following is tangential to the main results of this paper but of independent interest.
An inverse semigroup $S$ is said to be {\em Clifford} if  $\mathbf{d}(s) = \mathbf{r}(s)$ for each element $s \in S$.
It is a classical theorem \cite[Theorem~5.2.12]{Lawson98} that an inverse semigroup is Clifford if and only if its idempotents are central.

\begin{proposition}\label{prop:annoying} Let $S$ be a Boolean inverse semigroup.
The following are equivalent.
\begin{enumerate}
\item $S$ is Clifford.
\item $S$ contains no infinitesimals.
\end{enumerate}
\end{proposition}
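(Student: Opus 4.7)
\medskip

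\noindent
\textbf{Proof proposal.} The easy direction is (1) $\Rightarrow$ (2). If $S$ is Clifford and $a^{2}=0$, then by Lemma~\ref{lem:spooks}(2) we have $\mathbf{d}(a) \perp \mathbf{r}(a)$, so in particular $\mathbf{d}(a)\mathbf{r}(a)=0$; but the Clifford hypothesis gives $\mathbf{d}(a)=\mathbf{r}(a)$, hence this common idempotent squares to zero and is therefore $0$. This forces $a=0$, so $S$ has no non-zero infinitesimal.

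\medskip

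\noindent
The substantive direction is the contrapositive of (2) $\Rightarrow$ (1): assuming $S$ is not Clifford, I want to manufacture a non-zero infinitesimal. Pick $s\in S$ with $e:=\mathbf{d}(s)\neq \mathbf{r}(s)=:f$. Replacing $s$ by $s^{-1}$ if necessary (which swaps $e$ and $f$), I may assume $e\not\leq f$, so by Lemma~\ref{lem:basic} the idempotent $e\bar{f}$ is non-zero. The plan is to carve out of $s$ its restriction to the part of its domain that lies outside the range, namely
\[
a \;:=\; s\cdot e\bar{f}.
\]
A short computation using $s^{-1}s=e$ and the fact that idempotents commute shows $\mathbf{d}(a)=a^{-1}a=e\bar{f}$, which is non-zero, so $a\neq 0$. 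Moreover $\mathbf{r}(a)=se\bar{f}s^{-1}\leq ss^{-1}=f$, whereas $\mathbf{d}(a)\leq \bar{f}$. Hence $\mathbf{d}(a)\mathbf{r}(a)\leq \bar{f}f=0$, so $\mathbf{d}(a)\perp \mathbf{r}(a)$, and Lemma~\ref{lem:spooks}(2) yields $a^{2}=0$. Thus $a$ is a non-zero infinitesimal, contradicting (2).

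\medskip

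\noindent
I do not expect any serious obstacle: the whole argument hinges on spotting the right restriction of $s$, and the characterization of infinitesimals already packaged in Lemma~\ref{lem:spooks}(2) does the verification for free. The only technical point worth being careful about is the initial reduction to the case $e\not\leq f$ (as opposed to $f\not\leq e$), which is legitimate because replacing $s$ by $s^{-1}$ interchanges $\mathbf{d}$ and $\mathbf{r}$ and preserves both the Boolean inverse structure and the status of being/not being an infinitesimal.
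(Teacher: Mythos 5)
Your proof is correct and takes essentially the same route as the paper: cut the non-Clifford element down to the piece of its domain lying outside its range, namely $s\,\mathbf{d}(s)\overline{\mathbf{r}(s)}$, and verify it is a nonzero infinitesimal via the orthogonality characterization in Lemma~\ref{lem:spooks}(2). Your explicit reduction to the case $\mathbf{d}(s)\not\leq\mathbf{r}(s)$ even tidies a point the paper leaves implicit, since its choice of a nonzero idempotent below $\mathbf{d}(a)$ orthogonal to $\mathbf{d}(a)\mathbf{r}(a)$ silently requires that same assumption.
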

\begin{proof} (1)$\Rightarrow$(2). Let $a$ be an infinitesimal.
Then $a^{2} = 0$ and so $a^{-1}aaa^{-1} = 0$.
By assumption, $a^{-1}a = aa^{-1}$.
Thus $a^{-1}a = 0$ from which it follows that $a = 0$, a contradiction. 

(2)$\Rightarrow$(1). Suppose that there were an element $a \in S$ such that $a^{-1}a \neq aa^{-1}$. 
If  $a^{-1}aaa^{-1} = 0$ then $a$ would be an infinitesimal.
Thus $e = \mathbf{d}(a)\mathbf{r}(b) \neq 0$.
Since $e \leq \mathbf{d}(a)$ there is an idempotent $f$ such that $f \leq \mathbf{d}(a)$ and $fe = 0$.
Observe that $af \neq 0$ but then $(af)^{2} = afaf = a\mathbf{d}(a)f\mathbf{r}(a)f = aefa = 0$ and $af$ is an infinitesmial, which is a contradiction.
It follows that no such element $a$ can exist and so $S$ is a Clifford semigroup.
\end{proof}

A pair of infinitesimals $(b,a)$ is called a {\em $2$-infinitesimal} if $\mathbf{d}(b) = \mathbf{r}(a)$ and $ba$ is an infinitesimal.
The {\em extent} of $(b,a)$, denoted by $\mathbf{e}(b,a)$, is $\mathbf{r}(b) \vee \mathbf{d}(b) \vee \mathbf{d}(a)$.
A unit of order $3$ is called a {\em 3-cycle}.

\begin{lemma}\label{lem:julie} 
Let $(b,a)$ be a $2$-infinitesimal in the Boolean inverse monoid $S$ and put $c = (ba)^{-1}$ as in the following diagram
 \begin{equation*}
\xymatrix{
& e_{2} \ar[dl]_{b} & \\
e_{3} \ar[rr]_{c}  & & e_{1} \ar[ul]_{a}  }
\end{equation*}
where the idempotents $e_{1},e_{2},e_{3}$ are mutually orthogonal.
Put $e = e_{1} \vee e_{2} \vee e_{3}$.
\begin{enumerate}
\item $\{a,b,c\}$ is an orthogonal set and $a \vee b \vee c$ is an element of $\mathsf{U}(eSe)$ of order $3$.

\item $g = a \vee b \vee c \vee \bar{e}$ is a $3$-cycle.

\item Put $h = a \vee a^{-1} \vee \overline{\mathbf{e}(a)}$ and $k = c \vee c^{-1} \vee \overline{\mathbf{e}(c)}$.
Then $h$ and $k$ are special involutions and $g = [h,k] = (hk)^{2}$.

\end{enumerate}
\end{lemma}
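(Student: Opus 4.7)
The plan is to derive everything from the orthogonality of $e_1$, $e_2$, $e_3$, which follows immediately from Lemma~\ref{lem:spooks}(2) applied to the three infinitesimals $a$, $b$, and $ba$: we get $e_1 = \mathbf{d}(a) \perp \mathbf{r}(a) = e_2$, $e_2 = \mathbf{d}(b) \perp \mathbf{r}(b) = e_3$, and $e_1 = \mathbf{d}(ba) \perp \mathbf{r}(ba) = e_3$. Once this is in hand, every subsequent claim becomes a bookkeeping exercise in the Boolean inverse monoid, using the basic relations $s = s\mathbf{d}(s) = \mathbf{r}(s)s$ together with distributivity of multiplication over compatible joins.

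For part (1), I first check that $\{a,b,c\}$ is pairwise orthogonal. Here $c = (ba)^{-1}$, so $\mathbf{d}(c) = e_3$ and $\mathbf{r}(c) = e_1$. For each pair, both of the relevant products of domains or ranges lie among the orthogonal pairs $e_ie_j = 0$ ($i \ne j$), and absorbing those zero idempotents into $s^{-1}t$ or $st^{-1}$ yields $0$; this verifies $a\perp b$, $a\perp c$, $b \perp c$. Since $a\vee b\vee c$ therefore exists, I set $t = a\vee b\vee c$ and compute $t^{-1}t$ and $tt^{-1}$ by distributing; orthogonality kills the off-diagonal terms, leaving $\mathbf{d}(t) = e_1\vee e_2\vee e_3 = e = \mathbf{r}(t)$, so $t$ lies in the local monoid $eSe$ with $\mathbf{d}(t) = \mathbf{r}(t) = e$. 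To see $t$ has order~$3$, I expand $t^2 = \bigvee_{x,y\in\{a,b,c\}} xy$ and kill terms: the squares vanish as $a^2 = b^2 = c^2 = 0$; the products $ab$, $bc$, $ca$ vanish because the relevant $\mathbf{d}\cdot\mathbf{r}$ product is $e_1e_3$, $e_2e_1$, or $e_3e_2 = 0$; the surviving products simplify to $ac = aa^{-1}b^{-1} = b^{-1}$, $ba = c^{-1}$, and $cb = a^{-1}b^{-1}b = a^{-1}$. Thus $t^2 = a^{-1}\vee b^{-1} \vee c^{-1} = t^{-1}$, so $t^3 = e$ and $t$ is of order~$3$ in $\mathsf{U}(eSe)$ (it is not $e$ because, e.g., $at = b^{-1} \ne a$ unless the whole configuration collapses).

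For part (2), I simply apply Lemma~\ref{lem:spooks}(1) to $t$, whose domain and range coincide, to obtain the invertible element $g = t\vee \bar e$; expanding $g^3$ using orthogonality of $t$ with $\bar e$ (as $\mathbf{d}(t) = e$) reduces $g^3$ to $t^3\vee \bar e^3 = e\vee\bar e = 1$, and $g \ne 1$ since $t \ne e$.

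For part (3), $h$ and $k$ are special involutions by definition. Here the extents are $\mathbf{e}(a) = e_1\vee e_2$ and $\mathbf{e}(c) = e_1\vee e_3$, so $\overline{\mathbf{e}(a)} = e_3 \vee \bar e$ and $\overline{\mathbf{e}(c)} = e_2 \vee \bar e$. The main task is to expand
\[
hk = (a\vee a^{-1}\vee e_3\vee \bar e)(c\vee c^{-1}\vee e_2\vee \bar e)
\]
as a join of sixteen products and identify the survivors. This is where the bulk of the work lies, but the orthogonality of the $e_i$ kills almost everything: for instance $ac^{-1} = aba = 0$, $a^{-1}c = 0$ (ranges $e_1,e_1$ but domains $e_2,e_3$ give $\mathbf{d}(a^{-1})\mathbf{r}(c) = e_2e_1 = 0$), $e_3c = 0$, $a\bar e = 0$, etc. The three surviving nontrivial products are $ac = aa^{-1}b^{-1} = b^{-1}$, $a^{-1}e_2 = a^{-1}$, $e_3 c^{-1} = c^{-1}$, plus $\bar e\cdot\bar e = \bar e$, giving $hk = a^{-1}\vee b^{-1}\vee c^{-1}\vee \bar e = t^{-1}\vee \bar e = g^{-1}$. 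Since $h$ and $k$ are involutions, $[h,k] = hkhk = (hk)^2 = g^{-2} = g$, using $g^3 = 1$ from part~(2). The only real obstacle is the patience required for the sixteen-term expansion; everything collapses cleanly once the mutual orthogonality of $e_1,e_2,e_3,\bar e$ is systematically exploited.
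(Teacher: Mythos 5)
Your proof is correct and follows essentially the same route as the paper, which simply asserts the orthogonality checks, the identity $(a\vee b\vee c)^{2}=(a\vee b\vee c)^{-1}$, and part (3) as ``easy to check''/``routine verification''; you have merely supplied those routine computations (including the expansion of $hk=g^{-1}$) in full. The only cosmetic improvement I would suggest is replacing the informal ``unless the whole configuration collapses'' by the observation that $a\vee b\vee c=e$ would force $a\leq e$, making the nonzero infinitesimal $a$ an idempotent, which is impossible.
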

\begin{proof} (1) It is easy to check that  $\{a,b,c\}$ is an orthogonal subset and that $g = a \vee b \vee c$
is a unit in $eSe$.
Observe that $(a \vee b \vee c)^{2} = a^{-1} \vee b^{-1} \vee c^{-1} = (a \vee b \vee c)^{-1}$.
Thus  $(a \vee b \vee c)^{3} = e$.

(2) This is immediate by part (1).

(3) Routine verification.
 \end{proof}

\noindent
{\bf Definition. }A {\em special $3$-cycle} is one constructed as in part (2) of Lemma~\ref{lem:julie}. 

\begin{remark}\label{rem:whisky}
{\em By part (3) of Lemma~\ref{lem:julie}, each special $3$-cycle is a product of two special involutions.}
\end{remark}

\begin{example} {\em Let $I_{n}$ be a finite symmetric inverse monoid on $n$ letters.
Examples of infinitesimal elements are those elements of the form $x \mapsto y$ where $x,y \in X$ and $x \neq y$.
The group elements associated with these, as constructed in the above lemma, are precisely the transpositions.
The group generated by the involutions constructed from infinitesimals is precisely the symmetric group on $n$ letters,
the group of units of $I_{n}$.}
\end{example}

The above example motivates the following.\\

\noindent
{\bf Definition. }Let $S$ be a Tarski monoid.
Define $\mathsf{Sym}(S)$ to be the subgroup of $\mathsf{U}(S)$ generated by the special involutions 
and define $\mathsf{Alt}(S)$ to be the subgroup of $\mathsf{U}(S)$ generated by the special $3$-cycles.
Clearly, $\mathsf{Alt}(S), \mathsf{Sym}(S) \trianglelefteq \mathsf{U}(S)$ and  $\mathsf{Alt}(S) \leq \mathsf{Sym}(S)$ by Remark~\ref{rem:whisky}.
We call $\mathsf{Sym}(S)$ the {\em symmetric group} of $S$ and $\mathsf{Alt}(S)$ the {\em alternating group} of $S$.\\

These subgroups are the subject of \cite{Nek} defined there in a different, but equivalent, way.

Matui's spatial realization theorem \cite[Theorem 3.10]{Matui13} is phrased entirely in the language of \'etale groupoids.
By applying non-commutative Stone duality \cite{Lawson16}, the equivalence of (1) and (2) below is immediate
and that of (1) and (3) follows from \cite{Nek} (who appeals to to \cite{Rubin}).

\begin{theorem}[Matui's spatial realization theorem]\label{them:matui} Let $S$ and $T$ be two simple Tarski monoids.
Then the following are equivalent.
\begin{enumerate}
\item The monoids $S$ and $T$ are isomorphic.
\item The groups $\mathsf{U}(S)$ and $\mathsf{U}(T)$ are isomorphic.
\item  The symmetric groups $\mathsf{Sym}(S)$ and $\mathsf{Sym}(T)$ are isomorphic.
\end{enumerate}
\end{theorem}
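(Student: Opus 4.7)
The implications $(1) \Rightarrow (2)$ and $(1) \Rightarrow (3)$ are immediate: any monoid isomorphism $\Phi \colon S \to T$ restricts to a group isomorphism of units, and because infinitesimals, extents, and orthogonal joins are defined purely in monoid-theoretic terms, $\Phi$ carries special involutions to special involutions and special $3$-cycles to special $3$-cycles. My plan is to prove both $(2) \Rightarrow (1)$ and $(3) \Rightarrow (1)$ by a single uniform reconstruction procedure: from the abstract group $G$ (either $\mathsf{U}(S)$ or $\mathsf{Sym}(S)$) recover the Boolean algebra $\mathsf{E}(S)$ together with the conjugation action of $G$ on it, and then reassemble $S$. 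Since the reconstruction only needs enough involutions to separate non-zero idempotents, and these are already supplied by $\mathsf{Sym}(S)$, one argument handles both reverse directions.

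The heart of the reconstruction assigns to each non-zero $e \in \mathsf{E}(S)$ its rigid stabiliser $R_{e} \leq G$, generated by those special involutions $a \vee a^{-1} \vee \overline{\mathbf{e}(a)}$ with $\mathbf{e}(a) \leq e$ --- these are exactly the units fixing the complementary clopen $U_{\bar{e}}$ of $\mathsf{X}(S)$ pointwise. The assertions to verify are that $e \mapsto R_{e}$ is an order-embedding of $\mathsf{E}(S) \setminus \{0\}$ into the subgroup lattice of $G$, that $R_{e \wedge f} = R_{e} \cap R_{f}$, and, crucially, that the family $\{R_{e}\}$ is definable from the abstract group structure of $G$ alone. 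Granted these, a group isomorphism $\phi$ carries rigid stabilisers to rigid stabilisers, inducing a Boolean algebra isomorphism $\psi \colon \mathsf{E}(S) \to \mathsf{E}(T)$ with $\phi(R_{e}) = R_{\psi(e)}$; equivariance $\psi(geg^{-1}) = \phi(g)\psi(e)\phi(g)^{-1}$ is automatic because $gR_{e}g^{-1} = R_{geg^{-1}}$. To extend $(\phi,\psi)$ to a monoid isomorphism I would use the normal form that every element $s \in S$ is a finite orthogonal join $\bigvee_{i} g_{i}e_{i}$ with $g_{i} \in \mathsf{U}(S)$ and $e_{i} \in \mathsf{E}(S)$, extracted by covering $\mathbf{d}(s)$ with idempotents $e_{i}$ small enough that each $se_{i}$ extends to a unit $g_{i}$ via Lemma~\ref{lem:darwin} and Lemma~\ref{lem:spooks}(1); the induced map $\bigvee g_{i}e_{i} \mapsto \bigvee \phi(g_{i})\psi(e_{i})$ is then a well-defined additive monoid isomorphism, with injectivity secured by fundamentality via Lemma~\ref{lem:pros}.

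The main obstacle is the purely group-theoretic characterisation of the rigid stabilisers $R_{e}$: this is the Rubin-style step invoked by Nekrashevych in \cite{Nek}, and it is where the $0$-simplifying and fundamental hypotheses on $S$ do the real work, ruling out pathological subgroups that might masquerade as rigid stabilisers. A secondary obstacle is the normal-form lemma itself, whose proof requires careful management of the domain-range balance of bisections via the $\wedge$-operation; but once rigid stabilisers are recognised abstractly, the remaining verifications reduce to bookkeeping.
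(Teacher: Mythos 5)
Your overall architecture matches the paper's: deduce an equivariant isomorphism of the Boolean algebras of idempotents from the group isomorphism, then rebuild the monoid using the fact that every element of a simple Tarski monoid is a finite compatible join of restrictions of units from $\mathsf{Sym}(S)$ (the paper's Proposition~\ref{prop:dory} together with Proposition~\ref{prop:isomorphism}, via Lemmas~\ref{lem:ugly}, \ref{lem:trump} and \ref{lem:lino}). But there is a genuine gap at exactly the point you flag as ``the main obstacle'': you never give, or even sketch, a purely group-theoretic characterization of the rigid stabilisers $R_{e}$, and without it the crucial claim that a group isomorphism $\phi$ carries rigid stabilisers to rigid stabilisers is unsupported --- $\phi$ only preserves subgroups that are definable from the group structure alone, and $R_{e}$ is defined using the idempotent $e$, i.e.\ using data external to the group. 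Deferring this to ``the Rubin-style step invoked by Nekrashevych'' concedes the entire content of the theorem: the whole point of the paper's Section~3 is to prove this recognition step algebraically rather than quote Rubin. Concretely, the paper does it through Matui's chain of centralizer conditions $C_{t} \supseteq Z_{t}$, $S_{t}$, $W_{t}$ attached to an involution $t$, the portmanteau Lemma~\ref{lem:Brexit}, and Theorem~\ref{prop:matui_five}, which identifies the group-theoretically defined $W_{t}$ with the local subgroup $U(\sigma(t))$; the axioms (F1)--(F3), verified in Proposition~\ref{prop:hope} using $0$-simplifying (Lemmas~\ref{lem:george} and \ref{lem:horsa}) and fundamentality (Lemmas~\ref{lem:tea-time} and \ref{lem:fudge}), are what make that identification work. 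None of this appears in your proposal, so the reverse implications $(2)\Rightarrow(1)$ and $(3)\Rightarrow(1)$ are not actually proved.

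Two secondary points. First, even granting recognition of the subgroups $U(\sigma(t))$, you still need to pass from the order and orthogonality relations on supports of involutions to an isomorphism of the full Boolean algebras and an equivariant homeomorphism of structure spaces; the paper does this via the support skeleton of a filter and an ultrafilter argument (Lemma~\ref{lem:splott}, Lemma~\ref{lem:susan}, Proposition~\ref{them:matui_main}, Corollary~\ref{cor:cold}), and your assertion that an order-embedding with $R_{e\wedge f}=R_{e}\cap R_{f}$ ``induces'' a Boolean algebra isomorphism skips this reconstruction. Second, your normal-form step is stated too loosely: restrictions $se_{i}$ of an arbitrary element need not have equal domain and range, so Lemma~\ref{lem:spooks}(1) does not apply directly; the paper instead proves piecewise factorizability by showing every ultrafilter of $S$ contains an infinitesimal or a product of two infinitesimals and hence lies below a unit of $\mathsf{Sym}(S)$, then uses compactness (Lemmas~\ref{lem:welsh} and \ref{lem:hengist}). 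These latter issues are repairable; the missing group-theoretic recognition of local subgroups is the essential defect.
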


The inverse monoid approach described in our paper is entirely consonant with the approach adopted by Matui. He works throughout with what he terms
`compact open $G$-sets' which  are precisely the `compact-open local bisections' needed in non-commutative Stone duality \cite{Lawson10b,Lawson12,Lawson12,Lawson16,LL}. 
Thus many of Matui's calculations are actually being carried out in the
Tarski monoid associated with his groupoid.

There is a finitary version of the above theorem that is worth stating. The finite simple Boolean inverse monoids are precisely the finite symmetric inverse monoids $I_{n}$
and the groups of units of such monoids are exactly the finite symmetric groups $S_{n}$.
Clearly, $S_{m} \cong S_{n}$ if and only if $m = n$ if and only if $I_{m} \cong I_{n}$.
In particular, this suggests that the groups of units of simple Tarski monoids should be regarded as infinite generalizations of finite symmetric groups.
This is not a novel idea but the inverse monoid context makes the point very clearly.

Finally, there are two interesting families of Tarski monoids that are worth highlighting.
The {\em Cuntz inverse monoids} $C_{n}$, where $n \geq n$,  are described in \cite{Lawson07b, LS}.
Their groups of units are the Thompson groups $V_{n}$.
The {\em AF inverse monoids} are described in \cite{LS} and include a class of (simple) Tarski inverse monoids.
In particular, the groups studied in \cite{KS, LN} arise as groups of units of simple AF inverse monoids which are Tarski monoids.

\section{An algebraic proof of the spatial realization theorem}

The goal of this section is to prove Theorem~\ref{them:matui} directly.
We shall, of course, reinterpret many ideas due to Matui but our approach opens the door to obtaining an algebraic characterization\footnote{In \cite{Lawson16}, a characterization is obtained of these groups
but it assumes that the group is already given as a subgroup of the group of homeomorphisms of the Cantor space.} of the groups of units of simple Tarski monoids.

\subsection{Preliminaries}

Let $S$ be a Boolean inverse monoid.
Then the group of units of $S$ acts on the set of idempotents of $S$
by $e \mapsto geg^{-1}$, where $e \in \mathsf{E}(S)$ and $g \in \mathsf{U}(S)$.
We call this the {\em natural action} on the Boolean algebra of idempotents.
The following is an application of part (1) of Lemma~\ref{lem:spooks}.

\begin{proposition}\label{prop:fc} Let $S$ be a Boolean inverse monoid.
Then $S$ is fundamental if and only if the natural action of $\mathsf{U}(S)$ on $\mathsf{E}(S)$ is faithful.
\end{proposition}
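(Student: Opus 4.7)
The plan is to translate ``faithful action'' into the centralizer condition that defines ``fundamental'', and, in the harder direction, to use Lemma~\ref{lem:spooks}(1) to manufacture a unit out of a hypothetical non-idempotent element that centralizes $\mathsf{E}(S)$.

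For the forward direction, suppose $S$ is fundamental and let $g \in \mathsf{U}(S)$ act trivially, so that $geg^{-1} = e$ for every $e \in \mathsf{E}(S)$. Multiplying on the right by $g$ yields $ge = eg$ for all $e \in \mathsf{E}(S)$, so $g$ centralizes $\mathsf{E}(S)$. Fundamentality forces $g$ to be an idempotent; but a unit that is idempotent must equal $1$. Hence the action is faithful.

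For the converse, assume the natural action is faithful and let $s \in S$ centralize $\mathsf{E}(S)$; the goal is to show $s$ is itself an idempotent. First I would observe that $\mathbf{d}(s) = \mathbf{r}(s)$: centralization gives $s\cdot ss^{-1} = ss^{-1}\cdot s = s$, and the standard fact that $sf = s$ is equivalent to $s^{-1}s \leq f$ yields $s^{-1}s \leq ss^{-1}$; since $s^{-1}$ also centralizes $\mathsf{E}(S)$ (take inverses of $sf = fs$), the reverse inequality follows by the same argument. Put $e = \mathbf{d}(s) = \mathbf{r}(s)$, so by Lemma~\ref{lem:spooks}(1) the element $g = s \vee \bar{e}$ is a unit of $S$. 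The heart of the argument is to show that this $g$ acts trivially on $\mathsf{E}(S)$. Because $s = se$ and $e\bar{e} = 0$, we have $s\bar{e} = 0 = \bar{e}s$, and passing to inverses also $s^{-1}\bar{e} = 0 = \bar{e}s^{-1}$. Distributing the product $(s\vee \bar{e})f(s^{-1}\vee \bar{e})$ (using distributivity of multiplication over compatible joins and $(s\vee \bar{e})^{-1} = s^{-1}\vee \bar{e}$) kills the two cross terms, and the hypotheses $sf = fs$ and $\bar{e}f = f\bar{e}$ then give
\[
gfg^{-1} \;=\; sfs^{-1} \vee \bar{e}f\bar{e} \;=\; fe \vee f\bar{e} \;=\; f(e\vee \bar{e}) \;=\; f.
\]
Faithfulness of the action now forces $g = 1$, that is, $s \vee \bar{e} = 1$. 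Multiplying this equation on the right by $e$ and using distributivity produces $s = e$, which is an idempotent, as required.

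The main technical step is the trivial-action calculation for $g = s\vee \bar{e}$: it depends on the orthogonality $s \perp \bar{e}$, on distributivity of multiplication over compatible joins, and on the hypothesis that $s$ commutes with every idempotent. Everything else in the argument is a direct unwinding of the definitions of ``fundamental'', ``unit'', and ``faithful action''.
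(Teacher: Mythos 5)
Your proof is correct and follows essentially the same route as the paper's: both directions use the centralizer characterization of fundamentality, and the converse manufactures the unit $g = s \vee \overline{s^{-1}s}$ via Lemma~\ref{lem:spooks}(1) after showing $\mathbf{d}(s)=\mathbf{r}(s)$, then invokes faithfulness to conclude $g=1$ and hence $s$ is idempotent. The only difference is presentational: you spell out the computation $gfg^{-1}=f$ (which the paper merely asserts as ``observe that $ge=eg$''), and that calculation is sound.
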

\begin{proof} Suppose first that $S$ is fundamental.
Let $g$ be a unit such that $geg^{-1} = e$ for all idempotents $e$.
Then $ge = eg$ for all idempotents $e$.
By assumption, $g$ is an idempotent and so the group identity.
Thus the natural action is faithful.
Conversely, suppose that the natural action is faithful.
Let $a \in S$ be such that $ae = ea$ for all idempotents $e$.
In particular, $a = a(a^{-1}a) = (a^{-1}a)a$.
Thus $aa^{-1} = (a^{-1}a)aa^{-1}$ giving 
$aa^{-1} \leq a^{-1}a$ which
by symmetry yields $a^{-1}a = aa^{-1}$.
Put $g = a \vee \overline{a^{-1}a}$, a unit by Lemma~\ref{lem:spooks}.
Observe that $ge = eg$ for all idempotents $e$.
But the natural action is faithful.
Thus $g$ is the identity and so $a$ is an idempotent
from which it follows that $S$ is fundamental.
\end{proof}

There is an useful consequence of the above result.

\begin{lemma}\label{lem:tea-time} Let $S$ be a fundamental Boolean inverse monoid.
\begin{enumerate}

\item Let $e$ be an idempotent and $g$ a unit such that $geg^{-1} \neq e$.
Then there is a non-zero idempotent $f$ such that $f \leq e$ and $f \perp gfg^{-1}$. 

\item For each non-trivial unit $g$ there is a non-zero idempotent $e$ such that $e \perp geg^{-1}$.

\end{enumerate}
\end{lemma}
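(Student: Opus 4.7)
The plan is to derive (2) from (1) via Proposition~\ref{prop:fc}, so the real content lies in (1). For part~(1), I would set $e' = geg^{-1}$ and split into two cases according to where the symmetric difference of $e$ and $e'$ lives. Since $e \neq e'$, Lemma~\ref{lem:basic} forces at least one of the idempotents $e\bar{e'}$ and $\bar{e}e'$ to be non-zero.

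In the easy case $e\bar{e'} \neq 0$, take $f = e\bar{e'}$. Clearly $f \leq e$. Then $gfg^{-1} \leq geg^{-1} = e'$ while $f \leq \bar{e'}$, so $f \cdot gfg^{-1} \leq \bar{e'}\cdot e' = 0$, which gives $f \perp gfg^{-1}$ as required.

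In the remaining case we have $e\bar{e'} = 0$, equivalently $e \leq e'$, while $\bar{e}e' \neq 0$ gives $e < e'$. Conjugating by $g^{-1}$ (an automorphism of the Boolean algebra of idempotents) yields $g^{-1}eg < g^{-1}e'g = e$, so the idempotent $f := e \cdot \overline{g^{-1}eg}$ is non-zero and satisfies $f \leq e$. A direct computation gives
\[
gfg^{-1} \;=\; geg^{-1}\cdot\overline{g(g^{-1}eg)g^{-1}} \;=\; e'\bar{e},
\]
and then $f \cdot gfg^{-1} \leq e\cdot e'\bar{e} = e\bar{e} = 0$ since $e \leq e'$. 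Thus $f \perp gfg^{-1}$.

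For (2), I would simply observe that $S$ is assumed fundamental, so by Proposition~\ref{prop:fc} the natural action of $\mathsf{U}(S)$ on $\mathsf{E}(S)$ is faithful; hence a non-trivial unit $g$ must move some idempotent $e_{0}$, i.e.\ $ge_{0}g^{-1} \neq e_{0}$. Applying part~(1) to $e_{0}$ and $g$ refines $e_{0}$ to a non-zero $e \leq e_{0}$ with $e \perp geg^{-1}$. The only subtlety is the second case of~(1): rather than trying to manufacture $f$ directly inside $e$ when $e < e'$, one uses that $g^{-1}$ pulls the problem back into the first case, and the perpendicularity then emerges automatically from $f \leq e$ and the disjointness of $e$ from $e'\bar{e}$.
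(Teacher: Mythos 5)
Your proposal is correct, and your part (2) is exactly the paper's argument: use Proposition~\ref{prop:fc} to find some idempotent moved by $g$ and then apply part (1). For part (1), though, you take a genuinely different and more elementary route. The paper separates $e$ from $geg^{-1}$ by an ultrafilter of $\mathsf{E}(S)$ (via the separation result \cite[Proposition~I.2.5]{PJ} together with Lemma~\ref{lem:exel}), and in the case where the ultrafilter contains $geg^{-1}$ but not $e$ it transports the ultrafilter by $g^{-1}$ before extracting $f$. You replace all of that by the finitary Boolean observation that $e \neq e' = geg^{-1}$ forces $e\bar{e'} \neq 0$ or $\bar{e}e' \neq 0$ (Lemma~\ref{lem:basic}), taking $f = e\bar{e'}$ in the first case and $f = e\,\overline{g^{-1}eg}$ in the second; each verification is a one-line computation once one notes that conjugation by a unit is a Boolean algebra automorphism of $\mathsf{E}(S)$, hence commutes with complementation --- the fact you use implicitly when writing $g\,\overline{g^{-1}eg}\,g^{-1} = \bar{e}$, and which is standard (an order isomorphism of $\mathsf{E}(S)$ fixing $0$ and $1$ preserves complements). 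Structurally the two proofs run in parallel --- the same dichotomy, with the second case handled by conjugating by $g^{-1}$ --- but yours stays entirely inside the Boolean algebra of idempotents and avoids ultrafilters (and hence any appeal to choice or Stone-space separation), whereas the paper's version fits the ultrafilter machinery it deploys throughout the rest of the argument.
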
 
\begin{proof} (1) By \cite[Proposition~I.2.5]{PJ},
and using the fact that in a Boolean algebra ultrafilters are prime filters,
there are two possibilities.
There is an ultrafilter $F \subseteq \mathsf{E}(S)$ such that
either 
($e \in F$ and $geg^{-1} \notin F$)
or
($e \notin F$ and $geg^{-1} \in F$).
In the first case, this means by Lemma~\ref{lem:exel} that there is $f \in F$, which can be chosen so that $f \leq e$, such that $f(geg^{-1}) = 0$.
It follows that $f(gfg^{-1}) = 0$ since $gfg^{-1} \leq geg^{-1}$.
In the second case, observe that $F' = g^{-1}Fg$ is an ultrafilter
and that $e \in F'$ and $g^{-1}eg \notin F'$.
Thus there is an $f \in F'$ such that $f(g^{-1}fg) = 0$.
It follows that $(gfg^{-1})f = 0$.

(2) Let $g$ be a non-trivial unit. 
By Proposition~\ref{prop:fc},
there is an idempotent $f$ such that $f \neq gfg^{-1}$.
We now apply part (1) above.
\end{proof}

Let $S$ be a Boolean inverse monoid.
Then the group of units of $S$ acts on structure space of $S$ 
by $F \mapsto gFg^{-1}$, where $F \in \mathsf{X}(S)$ and $g \in \mathsf{U}(S)$.
We call this the {\em natural action} on the structure space.

\begin{lemma}\label{lem:soros} Let $S$ be a Boolean inverse monoid.
\begin{enumerate}

\item  Let $F$ be an ultrafilter in $\mathsf{E}(S)$ and let $g$ be a unit.
Then $F \neq gFg^{-1}$ if and only if there exists $e \in F$ such that $e \perp geg^{-1}$.

\item Let $F$ be an ultrafilter in $\mathsf{E}(S)$ and let $g,h$ be units such that $F, gFg^{-1}, hFh^{-1}$ are distinct.
Then there exists an $e \in F$ such that $\{ e,geg^{-1},heh^{-1} \}$ is an orthogonal set.

\end{enumerate}
\end{lemma}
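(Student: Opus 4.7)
The plan is to prove both parts using the ultrafilter criterion of Lemma~\ref{lem:exel} together with the fact that conjugation by a unit is a Boolean-algebra automorphism of $\mathsf{E}(S)$, hence sends ultrafilters to ultrafilters and preserves meets and the order. Part (1) is the workhorse; part (2) will be obtained by three applications of part (1) together with an elementary reduction.

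For part (1), the ``if'' direction is almost immediate: if $F = gFg^{-1}$ and $e \in F$ satisfies $e \cdot geg^{-1} = 0$, then $F$ contains both $e$ and $geg^{-1}$, whose meet is $0$, which is impossible in a proper filter. For the ``only if'' direction, $F$ and $gFg^{-1}$ are distinct ultrafilters, so their symmetric difference is non-empty. After possibly replacing $g$ by $g^{-1}$ (which only relabels the statement we want), I may assume there is $f \in F$ with $gfg^{-1} \notin F$. Lemma~\ref{lem:exel} then produces $e_{0} \in F$ with $e_{0} \cdot gfg^{-1} = 0$. Putting $e = e_{0}f \in F$ and using that $e \leq f$ implies $geg^{-1} \leq gfg^{-1}$, I conclude $e \cdot geg^{-1} \leq e_{0} \cdot gfg^{-1} = 0$, as required.

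For part (2), I first apply part (1) to $g$ to produce $e_{1} \in F$ with $e_{1} \perp ge_{1}g^{-1}$, and to $h$ to produce $e_{2} \in F$ with $e_{2} \perp he_{2}h^{-1}$. To obtain orthogonality between the conjugates $geg^{-1}$ and $heh^{-1}$, I observe that $gFg^{-1} \neq hFh^{-1}$ is equivalent, after conjugating by $g^{-1}$, to $F \neq (g^{-1}h)F(g^{-1}h)^{-1}$. Thus part (1) applied to $k = g^{-1}h$ yields $e_{3} \in F$ with $e_{3} \cdot ke_{3}k^{-1} = 0$. Set $e = e_{1}e_{2}e_{3} \in F$. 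Since $e \leq e_{i}$ for each $i$ and conjugation preserves the order, the first two relations give $e \perp geg^{-1}$ and $e \perp heh^{-1}$, while conjugating the third relation on the left by $g$ and on the right by $g^{-1}$ yields $geg^{-1} \cdot heh^{-1} = 0$.

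The main technical subtlety is in the ``only if'' direction of part (1): Lemma~\ref{lem:exel} provides a separating element $e_{0} \in F$ and some witness $f \in F$ for the missing conjugate, but the conclusion demands a \emph{single} $e \in F$ orthogonal to its own conjugate $geg^{-1}$, which forces the trick of replacing $e_{0}$ by the meet $e_{0}f$. Once part (1) is established, the only genuine choice in part (2) is to use $k = g^{-1}h$ for the cross-orthogonality; after that, everything follows by monotonicity and the closure of $F$ under finite meets of idempotents.
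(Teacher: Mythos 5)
Your proof is correct and follows essentially the same route as the paper: part (1) via the ultrafilter criterion of Lemma~\ref{lem:exel} applied to the missing conjugate, then intersecting with the witness to get a single $e$ orthogonal to $geg^{-1}$, and part (2) by three applications of part (1) followed by taking the product of the three idempotents. The only cosmetic difference is that you make explicit (via $k=g^{-1}h$ and conjugation by $g$) the cross-orthogonality step that the paper states directly as a third application of part (1).
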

\begin{proof} (1) Only one direction needs proving.
Suppose that $F \neq gFg^{-1}$.
Then there exists $gig^{-1} \in gFg^{-1}$, where $i \in F$, such that $gig^{-1} \notin F$.
It follows by Lemma~\ref{lem:exel} that there exists $j \in F$ such that $j \perp gig^{-1}$.
Put $e = ij \in F$.
Then $e \perp geg^{-1}$.

(2) We use part (1) repeatedly. 
Since $F \neq gFg^{-1}$ there exists $i \in F$ such that $i \perp gig^{-1}$.
Since $F \neq hFh^{-1}$ there exists $j \in F$ such that $j \perp hjh^{-1}$.
Since $gFg^{-1} \neq hFh^{-1}$ there exists $k \in F$ such that $gkg^{-1} \perp hkh^{-1}$.
Put $e = ijk \in F$.
Then $\{e, geg^{-1}, heh^{-1}\}$ is an orthogonal set. 
\end{proof}

The following lemma is significant because it connects the natural actions with the natural partial order. 

\begin{lemma}\label{lem:blue} Let $S$ be a fundamental Boolean inverse monoid.
Let $g$ be a unit and $e$ an idempotent.
Then the following are equivalent.
\begin{enumerate}
\item $g$ fixes the set $U_{e}$ pointwise under the natural action.
\item $g$ fixes the set $e^{\downarrow}$ pointwise under the natural action.
\item $e \leq g$.
\end{enumerate}
\end{lemma}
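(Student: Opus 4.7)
The plan is to prove the cycle (3)$\Rightarrow$(2)$\Rightarrow$(1)$\Rightarrow$(2)$\Rightarrow$(3), which amounts to the two equivalences (1)$\Leftrightarrow$(2) and (2)$\Leftrightarrow$(3); the first is essentially a Stone-duality argument, while the second exploits the fundamentality of $S$ and of its local monoids via Lemma~\ref{lem:pros}.

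For (3)$\Rightarrow$(2), I would unpack $e \leq g$ as the identities $ge = eg = e$ (together with $g^{-1}e = eg^{-1} = e$, obtained by taking inverses) and, for $f \leq e$, use $f = ef = fe$ to compute $gfg^{-1} = f$ directly. For (2)$\Rightarrow$(1), the key observation is that an ultrafilter $F \in U_{e}$ is determined by the set $F \cap e^{\downarrow}$, because $f \in F$ forces $ef \in F$ with $ef \leq e$; so if $g$ fixes $e^{\downarrow}$ pointwise it fixes a generating set of $F$, and hence $gFg^{-1} = F$. For (1)$\Rightarrow$(2) I would argue by contrapositive: if $gfg^{-1} \neq f$ for some $f \leq e$, Lemma~\ref{lem:tea-time}(1) yields a non-zero $f' \leq f$ with $f' \perp gf'g^{-1}$; any ultrafilter $F$ containing $f'$ then lies in $U_{e}$ while $gFg^{-1}$ contains the orthogonal idempotent $gf'g^{-1}$, forcing $gFg^{-1} \neq F$ and contradicting~(1).

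The substantive step is (2)$\Rightarrow$(3). Applying (2) to $e$ itself gives $ge = eg$, and taking inverses yields $g^{-1}e = eg^{-1}$. These identities place $eg$ inside the local monoid $eSe$ and ensure $(eg)(g^{-1}e) = e = (g^{-1}e)(eg)$, so $eg$ is a unit of $eSe$ with identity $e$. For any idempotent $f$ of $eSe$, that is, any $f \leq e$, the commutation $gf = fg$ from (2) combined with $ef = fe = f$ gives
\[
(eg)f = g(ef) = gf = fg = (fe)g = f(eg),
\]
so $eg$ centralizes the idempotents of $eSe$. By Lemma~\ref{lem:pros} the local monoid $eSe$ is itself fundamental, so $eg$ must be an idempotent; but the only idempotent that is also a unit in a monoid is the identity, forcing $eg = e$, i.e., $e \leq g$. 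I expect this last step---recognizing that hypothesis~(2) promotes $eg$ to a unit of $eSe$ that is then pinned down by fundamentality of the local monoid---to be the main obstacle of the argument.
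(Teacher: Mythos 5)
Your proposal is correct and takes essentially the same approach as the paper: the decisive implication (2)$\Rightarrow$(3) is handled identically, by observing that $eg$ (the paper uses $ge$) is a unit of the local monoid $eSe$ centralizing its idempotents and invoking Lemma~\ref{lem:pros}. The remaining implications are the same Stone-space/ultrafilter arguments, merely reorganized --- the paper proves the cycle (1)$\Rightarrow$(2)$\Rightarrow$(3)$\Rightarrow$(1) and separates distinct conjugate idempotents by ultrafilters directly rather than via Lemma~\ref{lem:tea-time}.
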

\begin{proof} 
(1)$\Rightarrow$(2). Suppose that  $g$ fixes the set $U_{e}$ pointwise under the natural action.
Then it is immediate that $g^{-1}$ fixes the set $U_{e}$ pointwise under the natural action as well.
Let $0 \neq f \leq e$ and suppose that $gfg^{-1} \neq f$.
There are two cases.
Suppose first that there is an ultrafilter $F$ that contains $f$ and does not contain $gfg^{-1}$.
Then $gFg^{-1} \neq F$.
But $f \in F$ implies that $e \in F$ and we get a contradiction.
Suppose now that there is an ultrafilter $G$ that contains $gfg^{-1}$ and does not contain $f$.
We now apply the same argument as above except with $g^{-1}$ instead of $g$ and get another contradiction.
It follows that $g$ fixes the set $e^{\downarrow}$ pointwise under the natural action.

(2)$\Rightarrow$(3). Suppose that $g$ fixes the set $e^{\downarrow}$ pointwise under the natural action.
Then, in particular, $\mathbf{d}(ge) = \mathbf{r}(ge) = e$.
Thus $ge$ is a unit in the local monoid $eSe$.
By assumption, $ge$ commutes with every idempotent in $eSe$.
But $S$ fundamental implies that $eSe$ is fundamental by Lemma~\ref{lem:pros}.
Thus $ge$ is an idempotent and so equals $e$.
Thus $e = ge \leq g$, as required. 

(3)$\Rightarrow$(1).
Suppose that  $e \leq g$.
Let $F \in U_{e}$ and $f \in F$.
Then $ef \leq gfg^{-1}$.
But $ef \in F$.
Thus $gfg^{-1} \in F$.
It follows that $gFg^{-1} \subseteq F$.
But $gFg^{-1}$ is itself an ultrafilter.
Thus $gFg^{-1} = F$, as required.
\end{proof}

So far, we have left open the question of the existence of infinitesimals.
It is here that we shall need to assume $0$-simplifying.

\begin{lemma}\label{lem:george} Let $S$ be a $0$-simplifying Tarski monoid.
Let $F \subseteq \mathsf{E}(S)$ be an ultrafilter and let $e \in F$.
Then there exists an element $a \in S$ such that
\begin{enumerate}

\item $a$ is an infinitesimal.

\item $a^{-1}a \in F$.

\item $a \in eSe$.

\end{enumerate}
\end{lemma}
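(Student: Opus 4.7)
The plan is to combine atomlessness of $\mathsf{E}(S)$ with $0$-simplifying (via Lemma~\ref{lem:darwin}) to produce an infinitesimal in $eSe$ whose domain idempotent lies in $F$. Because $S$ is a Tarski monoid, its idempotent semilattice $\mathsf{E}(S)$ is the Tarski algebra and hence atomless. So I first pick a non-zero proper sub-idempotent $e_{1} < e$ and set $e_{2} = e\overline{e_{1}}$; this decomposes $e = e_{1} \vee e_{2}$ into a pair of mutually orthogonal non-zero idempotents. Ultrafilters in Boolean algebras are prime, so since $e \in F$ at least one of $e_{1}, e_{2}$ belongs to $F$; relabel if necessary so that $e_{1} \in F$.

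Next I would apply $0$-simplifying. By Lemma~\ref{lem:darwin}, $\equiv$ is the universal relation on the non-zero idempotents, so in particular $e_{1} \preceq e_{2}$. This supplies a pencil $\{x_{1}, \ldots, x_{m}\}$ with $e_{1} = \bigvee_{i=1}^{m} \mathbf{d}(x_{i})$ and $\mathbf{r}(x_{i}) \leq e_{2}$ for each $i$. Since $e_{1} \in F$ and $F$ is prime, iterating primeness over the finite join forces some $\mathbf{d}(x_{j})$ to lie in $F$. Set $a := x_{j}$.

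It remains to verify the three conclusions. For (2), $a^{-1}a = \mathbf{d}(x_{j}) \in F$ by choice of $j$. For (3), $\mathbf{d}(a) \leq e_{1} \leq e$ and $\mathbf{r}(a) \leq e_{2} \leq e$, so $a = \mathbf{r}(a)\, a\, \mathbf{d}(a) = eae \in eSe$. For (1), orthogonality $e_{1} \perp e_{2}$ forces $\mathbf{d}(a) \perp \mathbf{r}(a)$, that is, $a^{-1}a \perp aa^{-1}$, whence $a^{2} = 0$ by part (2) of Lemma~\ref{lem:spooks}.

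The only potentially delicate point is the appeal to primeness in the middle paragraph: we genuinely need $F$ to be an ultrafilter (not merely a filter) in order to extract a single $\mathbf{d}(x_{j}) \in F$ from the pencil. Everything else is bookkeeping once one recognises that splitting $e$ into two orthogonal pieces (one of them lying in $F$) gives a ready-made target idempotent $e_{2}$, orthogonal to $F$'s own witness $e_{1}$, into which $0$-simplifying can transport a sub-idempotent of $e_{1}$ via a single element of the pencil.
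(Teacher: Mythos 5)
Your proposal is correct and follows essentially the same route as the paper: split $e$ into two orthogonal non-zero pieces using atomlessness of the Tarski algebra, use primeness of the ultrafilter to locate which piece lies in $F$, then use $0$-simplifying to obtain a pencil from that piece into its orthogonal complement and extract a single pencil element with domain in $F$. The verification of the three conclusions is the same as in the paper's argument.
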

\begin{proof}
The idempotent $e \neq 0$.
We are working in a Tarski algebra, and so $e$ cannot be an atom.
Thus there exists $0 \neq f < e$.
The idempotents form a Boolean algebra, and so $e = f \vee \bar{f}$ and $f \wedge \bar{f} = 0$.
Since $f \vee \bar{f} = e \in F$, and $F$ is an ultrafilter and so a prime filter, we know that $f \in F$ or $\bar{f} \in F$.
Without loss of generality, we may assume that $f \in F$.
Now $S$ is $0$-simplifying and so $\bar{f} \equiv f$.
In particular, $f \preceq \bar{f}$.
We may therefore find elements $x_{1}, \ldots, x_{m}$ 
such that
$f = \bigvee_{i=1}^{m} \mathbf{d}(x_{i})$ 
and $\mathbf{r}(x_{i}) \leq \bar{f}$.
We use the fact that $F$ is a prime filter, to deduce that $\mathbf{d}(x_{i}) \in F$ for some $i$.
Put $a = x_{i}$.
Then $a^{-1}a \leq f$ and $aa^{-1} \leq \bar{f}$.
Hence $a^{-1}a \perp aa^{-1}$.
It follows that $a$ is an infinitesimal.
Clearly,  $a^{-1}a,aa^{-1} \leq e$.
In addition, $a^{-1}a \in F$.
\end{proof}

The above result implies the existence of of $2$-infinitesimals, as we now show.

\begin{lemma}\label{lem:horsa} Let $S$ be a $0$-simplifying Tarski monoid. 
Let $e$ be any non-zero idempotent.
Then there exist infinitesimals $a,b \in eSe$ such that $(b,a)$ is a $2$-infinitesimal.
\end{lemma}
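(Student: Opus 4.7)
My plan is to construct infinitesimals $a,b \in eSe$ satisfying $\mathbf{d}(b) = \mathbf{r}(a)$ and $\mathbf{d}(a) \perp \mathbf{r}(b)$; the latter, together with the computations $\mathbf{d}(ba) = \mathbf{d}(a)$ and $\mathbf{r}(ba) = \mathbf{r}(b)$ (which use $\mathbf{d}(b) = \mathbf{r}(a)$), forces $ba$ to be an infinitesimal by Lemma~\ref{lem:spooks}(2). The clean way to arrange these three orthogonalities at once is to first choose pairwise orthogonal non-zero idempotents $e_1, e_2, e_3 \leq e$, which is possible because $\mathsf{E}(S)$ is the Tarski algebra and therefore atomless: two successive non-trivial splittings of $e$ yield such a triple, with $e_1 \vee e_2 \vee e_3 \leq e$.

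Next, by Lemma~\ref{lem:darwin}, $0$-simplifying makes $\equiv$ universal on non-zero idempotents, so $e_1 \preceq e_2$. Unpacking the definition supplies a pencil $x_1, \dots, x_m$ with $\bigvee_i \mathbf{d}(x_i) = e_1$ and $\mathbf{r}(x_i) \leq e_2$; since $e_1 \neq 0$, at least one $a := x_i$ is non-zero. Then $\mathbf{d}(a) \leq e_1$ and $\mathbf{r}(a) \leq e_2$ are orthogonal, so $a$ is an infinitesimal lying in $eSe$. Applying the same argument to the non-zero idempotent $e_2' := \mathbf{r}(a) \leq e_2$ paired with $e_3$ produces a non-zero element $b$ with $0 \neq \mathbf{d}(b) \leq e_2'$ and $\mathbf{r}(b) \leq e_3$; in particular $b$ is itself an infinitesimal in $eSe$.

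To force $\mathbf{d}(b) = \mathbf{r}(a)$ \emph{exactly}, set $f := \mathbf{d}(b) \leq \mathbf{r}(a)$ and replace $a$ by $a' := fa$. A direct check using $f \leq aa^{-1}$ gives $\mathbf{r}(a') = faa^{-1}f = f$, and $\mathbf{d}(a') = a^{-1}fa$ is a non-zero idempotent still below $e_1$, so $a'$ is an infinitesimal in $eSe$ and $\mathbf{d}(b) = f = \mathbf{r}(a')$. Finally, $\mathbf{d}(ba') \leq \mathbf{d}(a') \leq e_1$ and $\mathbf{r}(ba') = \mathbf{r}(b) \leq e_3$; since $e_1 \perp e_3$, these are orthogonal and $ba'$ is an infinitesimal, so $(b, a')$ is the required $2$-infinitesimal.

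The only mild obstacle is that $0$-simplifying supplies pencils, not single elements with prescribed domains and ranges. This is dealt with in two moves: take any non-zero member of each pencil to obtain infinitesimals ``$e_1 \to e_2$'' and ``$e_2 \to e_3$'', then shrink the first element from the left by an idempotent so that its range coincides exactly with the domain of the second. The three-idempotent partition is doing the real work, for it builds the orthogonality $\mathbf{d}(a') \perp \mathbf{r}(b)$ into the construction, which is precisely what upgrades $(b, a')$ from two composable infinitesimals to a $2$-infinitesimal.
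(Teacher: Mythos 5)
Your proof is correct, and it reaches the conclusion by a slightly different route than the paper. The paper's proof leans on Lemma~\ref{lem:george}, applied twice in nested fashion: it first produces an infinitesimal $x \in eSe$, then an infinitesimal $a$ in the local monoid $\mathbf{d}(x)S\mathbf{d}(x)$, and finally sets $b = x\,\mathbf{r}(a)$, using that infinitesimals form an order ideal; the orthogonality $\mathbf{r}(b) \perp \mathbf{d}(a)$ then comes for free from $\mathbf{d}(x) \perp \mathbf{r}(x)$. You instead bypass Lemma~\ref{lem:george} (and hence any mention of ultrafilters and primality) by fixing an explicit orthogonal triple $e_1, e_2, e_3 \leq e$ via atomlessness, invoking Lemma~\ref{lem:darwin} directly to get pencils from $e_1$ to $e_2$ and from $\mathbf{r}(a)$ to $e_3$, extracting a single non-zero member of each, and then restricting $a$ to $a' = \mathbf{d}(b)\,a$ so that $\mathbf{r}(a') = \mathbf{d}(b)$ exactly. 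The adjustment-by-an-idempotent step mirrors the paper's $b = x\,\mathbf{r}(a)$ (you shrink the first factor where the paper shrinks the second), so the underlying mechanism is the same; what your version buys is self-containedness and a more elementary flavour, since you only need that some pencil member is non-zero rather than the sharper conclusion of Lemma~\ref{lem:george} that the domain of the chosen infinitesimal lies in a prescribed ultrafilter. The paper's version is shorter given that Lemma~\ref{lem:george} is already established and is needed elsewhere (in the proof that axiom (F1) holds). All the verifications you give ($\mathbf{r}(a') = f$, $\mathbf{d}(a') \neq 0$, $\mathbf{r}(ba') = \mathbf{r}(b)$, and orthogonality of $\mathbf{d}(ba')$ and $\mathbf{r}(ba')$ via $e_1 \perp e_3$) are sound.
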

\begin{proof} Every non-zero idempotent is an element of some ultrafilter in $E(S)$.
Thus by Lemma~\ref{lem:george}, we may find an infinitesimal $x \in eSe$.
Similarly, we may find an infinitesimal $a \in \mathbf{d}(x)S\mathbf{d}(x)$.
Put $b = x \mathbf{r}(a)$.
The set of infinitesimals forms an order ideal, and so $b$ is an infinitesimal.
By construction, $ba$ is a restricted product and  $\mathbf{r}(b) \perp \mathbf{d}(a)$. 
\end{proof}

\subsection{Ingredients for an isomorphism}

In this section, we will lay the foundations of the proof of our main theorem
in that  we shall describe what ingredients are needed to construct an isomorphism.

\begin{lemma}\label{lem:ugly} Let $S_{1}$ and $S_{2}$ be fundamental Boolean inverse monoids.
Let $G_{1} \leq \mathsf{U}(S_{1})$ and $G_{2} \leq \mathsf{U}(S_{2})$ be subgroups and suppose that
$\alpha \colon G_{1} \rightarrow G_{2}$ is an isomorphism of groups,
$\gamma \colon \mathsf{E}(S_{1}) \rightarrow \mathsf{E}(S_{2})$ is an isomorphism of Boolean algebras
and
$$\gamma (geg^{-1}) = \alpha (g) \gamma (e) \alpha (g)^{-1}$$
for all $g \in G_{1}$ and $e \in \mathsf{E}(S_{1})$.
Then $e \leq g$ if and only if $\gamma (e) \leq \alpha (g)$
for all $g \in G_{1}$ and $e \in \mathsf{E}(S_{1})$.
\end{lemma}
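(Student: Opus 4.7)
The plan is to use Lemma~\ref{lem:blue} as the bridge: it characterizes the order relation $e \leq g$ purely in terms of the natural action on idempotents, and the natural action is exactly what the hypothesis relates on the two sides. So the proof reduces to transporting a ``fix-pointwise'' condition across the pair $(\alpha, \gamma)$.

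Concretely, here is the intended chain of equivalences. First, since $S_1$ is fundamental and $g \in G_1 \leq \mathsf{U}(S_1)$, Lemma~\ref{lem:blue} (equivalence of (2) and (3)) gives
\[ e \leq g \iff gfg^{-1} = f \text{ for every } f \in \mathsf{E}(S_1) \text{ with } f \leq e. \]
Next, because $\gamma$ is an isomorphism of Boolean algebras it is an order-isomorphism, so it restricts to a bijection $e^{\downarrow} \to \gamma(e)^{\downarrow}$; in particular $f$ ranges over $e^{\downarrow}$ if and only if $\gamma(f)$ ranges over $\gamma(e)^{\downarrow}$. Applying $\gamma$ to the equation $gfg^{-1} = f$ and invoking the intertwining identity
\[ \gamma(gfg^{-1}) = \alpha(g)\gamma(f)\alpha(g)^{-1}, \]
together with injectivity of $\gamma$, shows that the displayed condition is equivalent to
\[ \alpha(g)\gamma(f)\alpha(g)^{-1} = \gamma(f) \text{ for every } \gamma(f) \in \gamma(e)^{\downarrow}. \]
Finally, since $S_2$ is also fundamental and $\alpha(g) \in G_2 \leq \mathsf{U}(S_2)$, one more application of Lemma~\ref{lem:blue} (now in $S_2$) yields $\gamma(e) \leq \alpha(g)$. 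The argument is symmetric in $(\alpha, \gamma)$ and $(\alpha^{-1}, \gamma^{-1})$, so both directions follow.

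There is no real obstacle here: the lemma is essentially a formal consequence of Lemma~\ref{lem:blue}. The only point that requires a moment's care is verifying that $\gamma$ genuinely carries $e^{\downarrow}$ onto $\gamma(e)^{\downarrow}$ (so that the quantifier ``for all $f \leq e$'' matches ``for all $f' \leq \gamma(e)$'' under the substitution $f' = \gamma(f)$); this is automatic from $\gamma$ being a Boolean algebra isomorphism and hence an order-isomorphism in both directions.
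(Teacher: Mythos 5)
Your proof is correct. The only substantive difference from the paper is which clause of Lemma~\ref{lem:blue} you use as the bridge: you invoke the equivalence of (2) and (3), so the object you transport across $(\alpha,\gamma)$ is the set of idempotents $f \leq e$, using only that $\gamma$ is an order-isomorphism of the Boolean algebras together with the intertwining identity and injectivity of $\gamma$. The paper instead uses the equivalence of (1) and (3): it takes an ultrafilter $F \in U_{\gamma(e)}$, pulls it back to the ultrafilter $\gamma^{-1}(F)$ containing $e$, pushes the fixed-point property forward via the intertwining identity, and concludes that $\alpha(g)$ fixes $U_{\gamma(e)}$ pointwise; it also proves only one implication and appeals to symmetry, exactly as you do. Your version is marginally more elementary, since it never mentions the structure space and needs no facts about ultrafilters being preserved under $\gamma$; the paper's version has the mild advantage of running in the same ultrafilter language used later (e.g.\ in Proposition~\ref{them:matui_main}), but the two arguments are otherwise of the same weight and both rest entirely on Lemma~\ref{lem:blue} plus the compatibility condition $\gamma(geg^{-1}) = \alpha(g)\gamma(e)\alpha(g)^{-1}$.
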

\begin{proof} Only one direction needs proving.
Suppose that $e \leq g$.
Thus $g$ fixes the set $U_{e}$ pointwise under conjugation by Lemma~\ref{lem:blue}.
Let $F \in U_{\gamma (e)}$ and let $f \in F$.
Then $\gamma^{-1}(f) \in \gamma^{-1}(F)$ which is an ultrafilter containing $e$.
Thus $g \gamma^{-1}(f)g^{-1} \in F$.
But $\gamma (g \gamma^{-1} (f) g^{-1}) = \alpha (g) f \alpha (g)^{-1}$.
It follows that $F$ is mapped to itself under the natural action by $\alpha (g)$
and so $\alpha (g)$ fixes the set $U_{\gamma (e)}$ pointwise.
Hence $\gamma (e) \leq \alpha (g)$ by  Lemma~\ref{lem:blue}.
\end{proof}

\begin{lemma}\label{lem:trump}
 Let $S_{1}$ and $S_{2}$ be Boolean inverse monoids
and let $G_{1} \leq \mathsf{U}(S_{1})$ and $S_{2} \leq \mathsf{U}(S_{2})$ be subgroups such that $S_{1} = (G_{1}^{\downarrow})^{\vee}$ and $S_{2} = (G_{2}^{\downarrow})^{\vee}$.
Let $\theta \colon G_{1}^{\downarrow} \rightarrow G_{2}^{\downarrow}$ be an isomorphism.
Then $\theta$ extends uniquely to an isomorphism $\Theta \colon S_{1} \rightarrow S_{2}$.
\end{lemma}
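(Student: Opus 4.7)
The plan is to define $\Theta$ explicitly on a generic element of $S_1 = (G_1^{\downarrow})^{\vee}$ by choosing a compatible-join decomposition, applying $\theta$ termwise, and taking the compatible join in $S_2$; concretely, for $s = \bigvee_{i=1}^{n} a_i$ with $\{a_1,\dots,a_n\}\subseteq G_1^{\downarrow}$ compatible, set $\Theta(s) = \bigvee_{i=1}^{n} \theta(a_i)$.

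\textbf{Setup.} First note that $G_i^{\downarrow}$ is closed under products (from $a\le g$, $b\le h$ giving $ab\le gh$) and inverses, contains $1$, and therefore contains all of $\mathsf{E}(S_i)$. Consequently $\theta$ is a semigroup isomorphism preserving the natural partial order and hence the compatibility relation; in particular it restricts to a Boolean algebra isomorphism $\gamma\colon\mathsf{E}(S_1)\to\mathsf{E}(S_2)$. So the prospective $\bigvee_i \theta(a_i)$ really is a legitimate compatible join in $S_2$.

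\textbf{Well-definedness --- the main hurdle.} Suppose $\bigvee_i a_i = \bigvee_j b_j$ are two such decompositions of $s$. Since all $a_i,b_j$ lie below $s$, every pair $a_i,b_j$ is compatible, so in the $\wedge$-monoid $S_1$ the meet $c_{ij}=a_i\wedge b_j = a_i\mathbf{d}(b_j)$ exists and belongs to $G_1^{\downarrow}$ (being below $a_i$). A short calculation using $\mathbf{d}(\bigvee_j b_j)=\bigvee_j\mathbf{d}(b_j)$ and $\mathbf{d}(a_i)\le\mathbf{d}(s)$ gives $a_i = \bigvee_j c_{ij}$ and dually $b_j = \bigvee_i c_{ij}$. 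The problem therefore reduces to the following claim: \emph{if $a = \bigvee_k c_k$ holds in $S_1$ with $a$ and all $c_k$ in $G_1^{\downarrow}$, then $\theta(a) = \bigvee_k \theta(c_k)$ in $S_2$.} Order preservation gives $\bigvee_k\theta(c_k)\le\theta(a)$. For the reverse, take $\mathbf{d}$ of both sides and use that $\gamma$ preserves finite joins of idempotents:
$$\mathbf{d}\!\left(\bigvee_k\theta(c_k)\right) = \bigvee_k\gamma(\mathbf{d}(c_k)) = \gamma\!\left(\bigvee_k\mathbf{d}(c_k)\right) = \gamma(\mathbf{d}(a)) = \mathbf{d}(\theta(a)).$$
In any inverse semigroup an element dominated by another with equal $\mathbf{d}$ coincides with it, proving the claim and hence well-definedness.

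\textbf{Homomorphism, bijectivity, uniqueness.} Multiplication distributes over compatible joins in any Boolean inverse monoid, so $\Theta(st)=\Theta(s)\Theta(t)$ follows by decomposing $s$ and $t$ and applying $\theta$'s multiplicativity to the representatives; inverses are handled identically, and additivity is built into the definition. Surjectivity is immediate from $S_2 = (G_2^{\downarrow})^{\vee}$ together with $\theta$ being surjective onto $G_2^{\downarrow}$, and injectivity follows by rerunning the well-definedness argument for $\theta^{-1}$. Uniqueness is forced by additivity: any extension must satisfy $\Theta(\bigvee_i a_i) = \bigvee_i\theta(a_i)$.

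The only genuinely delicate point is the well-definedness step. The obstacle is that $G_1^{\downarrow}$ is not closed under the compatible joins of the ambient monoid $S_1$, so one cannot simply quote abstract preservation of existing joins across the isomorphism. The resolution is to exploit the fact that the restriction of $\theta$ to idempotents is an honest Boolean algebra isomorphism, which does preserve finite joins, and then use the $\mathbf{d}$-trick above to upgrade this to preservation of precisely those compatible joins that happen to land back inside $G_i^{\downarrow}$.
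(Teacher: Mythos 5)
Your proof is correct and takes essentially the same route as the paper's: the same termwise definition of $\Theta$ on a compatible-join decomposition, the same well-definedness argument by refining two decompositions through the pairwise meets $a_i\wedge b_j=a_i\mathbf{d}(b_j)$, and the same symmetry and additivity arguments for bijectivity and uniqueness; your only variation is justifying the key step (that $\theta$ preserves a compatible join all of whose terms and whose value lie in $G_1^{\downarrow}$) via the $\mathbf{d}$-trick and the induced Boolean algebra isomorphism on idempotents, where the paper appeals directly to $\theta$ being an isomorphism of $G_1^{\downarrow}$. One cosmetic slip: $S_1$ is not assumed to be a $\wedge$-monoid, but since compatible elements have meets in any inverse semigroup your formula and argument stand unchanged.
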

\begin{proof} Let $\phi \colon S_{1} \rightarrow S_{2}$ be any additive homomorphism that extends $\theta$.
Let $s \in S_{1}$.
Then we can write $s = \bigvee_{i=1}^{m} g_{i}e_{i}$ where $g_{i} \in G_{1}$ and $e_{i} \in \mathsf{E}(S_{1})$ by assumption.
Since $\phi$ is additive we have that $\phi (s) =  \bigvee_{i=1}^{m} \phi(g_{i}e_{i}) =   \bigvee_{i=1}^{m} \theta (g_{i}e_{i})$.
This proves that if $\theta$ can be extended then that extension is unique.
It remains to prove that $\theta$ can be extended.

Let $s,t \in (G_{1})^{\downarrow}$.
Then $s = ge$ and $t = hf$ where $g,h \in G_{1}$ and $e,f \in \mathsf{E}(S)$.
Since $\theta$ is a homomorphism $s \sim t$ implies that $\theta (s) \sim \theta (t)$.

Let $s \in S_{1}$ be arbitrary.
Then we can write $s = \bigvee_{i=1}^{m} g_{i}e_{i}$ where $g_{i} \in G_{1}$ and $e_{i} \in \mathsf{E}(S_{1})$ by assumption.
Observe that $g_{i}e_{i} \sim g_{j}e_{j}$ and so $\theta (g_{i}e_{i}) \sim \theta (g_{j}e_{j})$.
It follows that we may define
$$\Theta (s) = \bigvee_{i=1}^{m} \theta (g_{i}e_{i}).$$
However, we need to check that this is independent of the way we described $s$.
Suppose that $s = \bigvee_{j=1}^{n} h_{j}f_{j}$ where $h_{j} \in G_{1}$ and $f_{j} \in \mathsf{E}(S)$.
We need to prove that
$$ \bigvee_{i=1}^{m} \theta (g_{i}e_{i})
=
\bigvee_{j=1}^{n} \theta (h_{j}f_{j}).$$
We have that $g_{i}e_{i} \leq \bigvee_{j=1}^{n} h_{j}f_{j}$.
We prove that $\theta (g_{i}e_{i}) \leq \bigvee_{j=1}^{n} \theta (h_{j}f_{j})$.
Observe first that since $g_{i}e_{i} \sim h_{j}f_{j}$ it follows that
$$g_{i}e_{i} \wedge h_{j}f_{j} = g_{i}e_{i}f_{j} = h_{j}e_{i}f_{j}.$$
It follows that
$$g_{i}e_{i} = \bigvee_{j=1}^{n} g_{i}e_{i}f_{j}$$
by part (3) of \cite[Lemma~2.5]{Lawson16}.
Now this join takes place entirely within $G_{1}^{\downarrow}$ and we have proved that $\theta$
is an isomorphism.
Thus 
$$\theta (g_{i}e_{i}) = \bigvee_{j=1}^{n} \theta (g_{i}e_{i}f_{j}).$$
But $g_{i}e_{i}f_{j} \leq g_{i}e_{i}$.
It is now immediate that
$$\theta (g_{i}e_{i}) \leq \bigvee_{j=1}^{n} \theta (h_{j}f_{j}).$$
This now implies that $\Theta$ is well-defined.
The fact that $\Theta$ is a homomorphism follows from the fact that $\theta$ is a homomorphism when restricted to $G_{1}^{\downarrow}$.
We have therefore constructed a unique homomorphism $\Theta \colon S_{1} \rightarrow S_{2}$  that extends $\theta$.
By symmetry, we may also construct a unique homomorphism $\Phi \colon S_{2} \rightarrow S_{1}$  that extends $\theta^{-1}$.
Observe that $(\Theta \mid G_{1}^{\downarrow})$ and $(\Phi \mid G_{2})$ are inverses of each other from which it readily follows that $\Phi = \Theta^{-1}$.
We have therefore proved that $\Theta$ is an isomorphism. 
\end{proof}

\begin{lemma}\label{lem:lino} Let $S_{1}$ and $S_{2}$ be Boolean inverse monoids
and let $G_{1} \leq \mathsf{U}(S_{1})$ and $S_{2} \leq \mathsf{U}(S_{2})$ be subgroups such that $S_{1} = (G_{1}^{\downarrow})^{\vee}$ and $S_{2} = (G_{2}^{\downarrow})^{\vee}$.
Suppose that there is an isomorphism $\alpha \colon G_{1} \rightarrow G_{2}$ of groups
and an isomorphism $\gamma \colon \mathsf{E}(S_{1}) \rightarrow \mathsf{E}(S_{2})$ of Boolean algebras 
such that the following two properties hold:
\begin{enumerate}
\item For all $g \in G_{1}$ and $e \in \mathsf{E}(S_{1})$ we have that
$$\gamma (geg^{-1}) = \alpha (g) \gamma (e) \alpha (g)^{-1}.$$
\item Let $e \in \mathsf{E}(S)$ and $g \in G_{1}$.
Then  $e \leq g$ if and only if $\gamma (e) \leq \alpha (g)$.
\end{enumerate}
Then there is an isomorphism $\Theta \colon S_{1} \rightarrow S_{2}$ that extends both $\alpha$ and $\gamma$.
\end{lemma}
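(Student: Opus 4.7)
The plan is to reduce to Lemma~\ref{lem:trump}. That lemma demands an isomorphism $\theta \colon G_{1}^{\downarrow} \to G_{2}^{\downarrow}$, so the task is to build such a $\theta$ out of the given $\alpha$ and $\gamma$ using the two compatibility conditions.

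I would begin with a normal-form observation: every element of $G_{1}^{\downarrow}$ can be written as $ge$ with $g \in G_{1}$ and $e \in \mathsf{E}(S_{1})$, since any $s \leq g$ satisfies $s = g(s^{-1}s)$; conversely $ge \leq g$. This suggests the definition
$$\theta(ge) = \alpha(g)\gamma(e),$$
which formally restricts to $\alpha$ on $G_{1}$ (take $e=1$) and to $\gamma$ on $\mathsf{E}(S_{1})$ (take $g=1$).

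The first real step is well-definedness. Suppose $ge = hf$. Taking $\mathbf{d}$ of both sides forces $e = f$, and then $he = ge$ gives $h^{-1}g \geq e$ (using $e \leq 1 = \mathbf{d}(h^{-1}g)$ and the definition of the natural partial order). Here condition~(2) is the crucial ingredient: it upgrades $e \leq h^{-1}g$ to $\gamma(e) \leq \alpha(h)^{-1}\alpha(g)$, and so $\alpha(h)\gamma(e) = \alpha(g)\gamma(e)$. Next, for the homomorphism property I would compute
$$\theta((ge)(hf)) = \theta\bigl(gh\cdot (h^{-1}eh)f\bigr) = \alpha(gh)\gamma(h^{-1}eh)\gamma(f),$$
then use condition~(1) to rewrite $\gamma(h^{-1}eh) = \alpha(h)^{-1}\gamma(e)\alpha(h)$, after which the right-hand side collapses to $\alpha(g)\gamma(e)\alpha(h)\gamma(f) = \theta(ge)\theta(hf)$. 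By symmetry (applying the entire construction to $\alpha^{-1}$ and $\gamma^{-1}$, which satisfy the mirror images of both hypotheses) one obtains a two-sided inverse, so $\theta$ is an isomorphism. Lemma~\ref{lem:trump} then extends $\theta$ uniquely to an isomorphism $\Theta \colon S_{1} \to S_{2}$, and the restriction identities established above show that $\Theta$ extends both $\alpha$ and $\gamma$.

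The main obstacle is the well-definedness of $\theta$ on $G_{1}^{\downarrow}$: two presentations $ge = hf$ of the same element must be sent to the same image, and this is precisely where condition~(2) does its essential work. Without the converse implication $e \leq g \Rightarrow \gamma(e) \leq \alpha(g)$, we would have no way to compare $\alpha(g)\gamma(e)$ with $\alpha(h)\gamma(f)$. Condition~(1), by contrast, is the routine ingredient needed to make the multiplication commute with the twist $h^{-1}eh$ appearing in the product $(ge)(hf)$.
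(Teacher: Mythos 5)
Your proposal is correct and follows essentially the same route as the paper: define $\theta(ge)=\alpha(g)\gamma(e)$ on $G_{1}^{\downarrow}$, use condition~(2) for well-definedness, condition~(1) for the homomorphism property, and then invoke Lemma~\ref{lem:trump} to extend to $\Theta$. The only cosmetic difference is that you obtain bijectivity of $\theta$ by running the construction on $\alpha^{-1},\gamma^{-1}$ to produce a two-sided inverse, whereas the paper argues injectivity directly via condition~(2); both are fine.
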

\begin{proof} We begin by considering the elements of $S_{1}$ in the inverse submonoid $G_{1}^{\downarrow} = G_{1} \mathsf{E}(S_{1})$.
Let $g,h \in G_{1}$ and $e,f \in \mathsf{E}(S_{1})$.
If $ge = hf$ then in fact $e = f$ by calculating domains.
Thus $ge = he$.
It follows that $e = (g^{-1}h)e$ and so $e \leq g^{-1}h$.
By property (2), we have that $\gamma (e) \leq \alpha (g^{-1}h)$.
It follows that $\alpha (g) \gamma (e) = \alpha (h) \gamma (e)$.
Define $\theta \colon G_{1}^{\downarrow}  \rightarrow G_{2}^{\downarrow}$
by $\theta (ge) = \alpha (g) \gamma (e)$.
This is well-defined by the above argument.
We prove that $\theta$ as defined is a homomorphism.
Let $s = ge$ and $t = hf$.
Then $st = gehf = (gh) h^{-1}ehf$.
By definition $\theta (s) = \alpha (g) \gamma (e)$ and $\theta (t) = \alpha (h) \gamma (f)$
and also
$\theta (st) = \alpha (gh) \gamma (h^{-1}ehf)$.
By property (1) and the fact that $\gamma$ is a homomorphism we have that
$$\alpha (gh) \gamma (h^{-1}ehf)
= \alpha (gh) \alpha (h^{-1}) \gamma (e) \gamma (h) \gamma (f)
=
\alpha (g) \gamma (e) \alpha (h) \gamma (f)
=
\theta (s)\theta (t).
$$
We next prove that $\theta$ is injective.
Suppose that $\theta (ge) = \theta (hf)$.
Then $\alpha (g) \gamma (e) = \alpha (h) \gamma (f)$.
As above, we have that $\gamma (e) = \gamma (f)$ and so, since $\gamma$ is an isomorphism,
we must have that $e = f$.
It follows that $\gamma (e) \leq \alpha (g^{-1}h)$.
Thus by property (2), we have that $e \leq g^{-1}h$ and so $ge = he$.
It follows that $\theta$ is an isomorphism from $G_{1}^{\downarrow}$ to $G_{2}^{\downarrow}$.
In fact, it is worth noting that the properties (1) and (2) exactly characterize isomorphisms 
from $G_{1}^{\downarrow}$ to $G_{2}^{\downarrow}$.
We now use Lemma~\ref{lem:trump} to extend $\theta$ to the required isomorphism.
\end{proof}

The main result of this section is the following.
It follows immediately from Lemma~\ref{lem:ugly} and Lemma~\ref{lem:lino}.

\begin{proposition}\label{prop:isomorphism}
Let $S_{1}$ and $S_{2}$ be fundamental Boolean inverse monoids
and let $G_{1} \leq \mathsf{U}(S_{1})$ and $S_{2} \leq \mathsf{U}(S_{2})$ be subgroups such that $S_{1} = (G_{1}^{\downarrow})^{\vee}$ and $S_{2} = (G_{2}^{\downarrow})^{\vee}$.
Suppose that there is an isomorphism $\alpha \colon G_{1} \rightarrow G_{2}$ of groups
and an isomorphism $\gamma \colon \mathsf{E}(S_{1}) \rightarrow \mathsf{E}(S_{2})$ of Boolean algebras 
such that for all $g \in G_{1}$ and $e \in \mathsf{E}(S_{1})$ we have that
$$\gamma (geg^{-1}) = \alpha (g) \gamma (e) \alpha (g)^{-1}.$$
Then there is an isomorphism $\Theta \colon S_{1} \rightarrow S_{2}$ that extends both $\alpha$ and $\gamma$.
\end{proposition}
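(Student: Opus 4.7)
The plan is essentially to observe that this proposition is the combination of the two preceding lemmas, and to verify that their hypotheses line up correctly. The statement of Proposition~\ref{prop:isomorphism} gives us exactly the setup of Lemma~\ref{lem:ugly}: both $S_1$ and $S_2$ are fundamental Boolean inverse monoids, $\alpha$ is a group isomorphism between subgroups of the respective unit groups, $\gamma$ is a Boolean algebra isomorphism on the idempotents, and the conjugation-compatibility identity $\gamma(geg^{-1}) = \alpha(g)\gamma(e)\alpha(g)^{-1}$ holds. So the first step is simply to invoke Lemma~\ref{lem:ugly} and conclude property (2) of Lemma~\ref{lem:lino}, namely that for every $g \in G_1$ and $e \in \mathsf{E}(S_1)$ we have $e \leq g$ if and only if $\gamma(e) \leq \alpha(g)$.

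With the order-preservation property in hand, the second step is to verify that all hypotheses of Lemma~\ref{lem:lino} are now met: we have Boolean inverse monoids $S_1, S_2$, subgroups $G_i \leq \mathsf{U}(S_i)$ generating $S_i$ in the sense $S_i = (G_i^\downarrow)^\vee$, the isomorphisms $\alpha$ and $\gamma$, the conjugation-compatibility identity (property (1)), and now also the order-preservation property (property (2)). Lemma~\ref{lem:lino} then directly delivers an isomorphism $\Theta \colon S_1 \to S_2$ extending both $\alpha$ and $\gamma$, which is the required conclusion.

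There is really no obstacle here; the work has already been done in the two supporting lemmas. The only thing to be careful about is that the statement of the proposition does \emph{not} assume property (2) of Lemma~\ref{lem:lino} as a hypothesis, whereas the construction of $\theta$ on $G_1^\downarrow$ in Lemma~\ref{lem:lino} genuinely needs it. This is precisely where the hypothesis that $S_1$ and $S_2$ are fundamental is used, via Lemma~\ref{lem:ugly}. So the proof is a two-line application: first Lemma~\ref{lem:ugly} to promote the hypotheses of the proposition to the hypotheses of Lemma~\ref{lem:lino}, then Lemma~\ref{lem:lino} to extend to the desired $\Theta$.
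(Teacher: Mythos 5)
Your proposal is correct and is exactly the paper's own argument: the paper deduces Proposition~\ref{prop:isomorphism} immediately by combining Lemma~\ref{lem:ugly} (which supplies property (2) from fundamentality) with Lemma~\ref{lem:lino}. Your careful check that the hypotheses line up is a sound, if slightly more explicit, rendering of the same two-step deduction.
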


In the next section, we shall investigate the requirement that  $S = (G^{\downarrow})^{\vee}$ 
where $G$ is a subgroup of the group of units of $S$.

\subsection{Piecewise factorizability}

An inverse monoid is said to be {\em factorizable} if every element lies beneath an element of the group of units.
For example, symmetric inverse monoids are factorizable if and only if they are finite.
In this paper, we shall need a weaker notion.
Let $G$ be a subgroup of the group of units of the Boolean inverse monoid $S$.
We say that $S$ is {\em piecewise factorizable with respect to $G$} if each element $s \in S$ may 
be written in the form $s = \bigvee_{i=1}^{m} s_{i}$ where for each $s_{i}$ there is a unit $g_{i} \in G$ such that $s_{i} \leq g_{i}$.
This may be rewritten in the following form:
$$s = \bigvee_{i=1}^{m} g_{i}e_{i} $$
where $e_{i} = \mathbf{d}(s_{i})$.
If $G = \mathsf{U}(S)$ then we simply say {\em piecewise factorizable}.
In this section, we prove the following which already hints at the close connection between the structure of the Tarski monoid as a whole and its group of units.

\begin{proposition}\label{prop:dory}  Let $S$ be a simple Tarski monoid.
Then $S$ is piecewise factorizable with respect to $\mathsf{Sym}(S)$.
Thus $S = (\mathsf{Sym}(S)^{\downarrow})^{\vee}$.
\end{proposition}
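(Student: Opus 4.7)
The plan is to show that every $s \in S$ lies in $(\mathsf{Sym}(S)^{\downarrow})^{\vee}$ by first decomposing $s$ via the Boolean structure of $\mathsf{E}(S)$, then handling the residual piece by a compactness argument on the structure space.

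First I would peel off the ``easy'' pieces. With $d = \mathbf{d}(s)$ and $r = \mathbf{r}(s)$, the compatible decomposition $s = s(d \wedge \overline{r}) \vee s(d \wedge r)$ isolates the summand $s(d \wedge \overline{r})$, which has orthogonal domain and range (hence is an infinitesimal by Lemma~\ref{lem:spooks}(2), and lies below a special involution by Lemma~\ref{lem:spooks}(3)), together with a residue $t = s(d \wedge r) \in rSr$. Using the $\wedge$-monoid structure, further split $t = f \vee t_{0}$, where $f = t \wedge \mathbf{d}(t)$ is the largest idempotent below $t$ (so $f \leq 1 \in \mathsf{Sym}(S)$) and $t_{0} = t\overline{f}$ satisfies $t_{0} \wedge \mathbf{d}(t_{0}) = 0$. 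It remains to show $t_{0} \in (\mathsf{Sym}(S)^{\downarrow})^{\vee}$.

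By compactness of $V_{t_{0}}$ in $\mathsf{G}(S)$, it suffices to find, for each ultrafilter $F \in V_{t_{0}}$, an element $g_{F} \in \mathsf{Sym}(S) \cap F$: then finitely many $V_{g_{F_{i}}}$ cover $V_{t_{0}}$, and using that $V_{t_{0}} \cap V_{g} = V_{t_{0} \wedge g}$ for $\wedge$-closed filters, one obtains $t_{0} = \bigvee_{i}(t_{0} \wedge g_{F_{i}})$, a compatible join with each piece in $\mathsf{Sym}(S)^{\downarrow}$. Note that $t_{0} \wedge 1 = 0$ forces $1 \notin F$, so $F$ is a non-identity germ in the groupoid. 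To produce $g_{F}$, I would exhibit an infinitesimal $a \in F$ and invoke Lemma~\ref{lem:spooks}(3), which supplies a special involution $g_{a} \geq a$ in $\mathsf{Sym}(S) \cap F$. When $\mathbf{d}(F) \neq \mathbf{r}(F)$, this is immediate: pick orthogonal witnesses $d_{0} \in \mathbf{d}(F)$ and $r_{0} \in \mathbf{r}(F)$ (possible since these are distinct ultrafilters in the Tarski algebra $\mathsf{E}(S)$), and for any $s_{0} \in F$ the refinement $a = s_{0}(d_{0} \wedge s_{0}^{-1}r_{0}s_{0})$ is an infinitesimal lying in $F$.

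The main obstacle is the isotropy case $\mathbf{d}(F) = \mathbf{r}(F)$, where $F$ is a non-identity self-arrow. There the direct construction above is unavailable, and one must combine primality of ultrafilters in $\mathsf{G}(S)$ with the existence of infinitesimals in every nonzero local monoid (Lemma~\ref{lem:george}) and an iterative ``escape'' splitting, carefully ensuring that the refinement terminates with a genuine element of $F$ rather than only a limiting germ. This is the step where both fundamentality and $0$-simplifying---i.e., full simplicity of $S$---are used in an essential way.
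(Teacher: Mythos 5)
Your compactness reduction (find, for each ultrafilter $F$ on $S$, an element of $\mathsf{Sym}(S)\cap F$, then cover the compact set $V_{t_0}$ by finitely many $V_{g}$) is exactly the paper's Lemma~\ref{lem:welsh}, and your non-isotropy case (separate the distinct idempotent ultrafilters $\mathbf{d}(F)\neq\mathbf{r}(F)$ by orthogonal idempotents and cut $s_0\in F$ down to an infinitesimal $r_0s_0d_0\in F$) is the paper's first case in Lemma~\ref{lem:hengist}. The preliminary peeling off of $s(d\wedge\bar r)$ and of $\phi(t)$ is harmless but unnecessary, since the covering argument works for an arbitrary element. The problem is the isotropy case, which you explicitly leave as a sketch: that is not a small finishing step but the entire point of the proposition, and it is precisely where the $0$-simplifying hypothesis is used (fundamentality, which you invoke, plays no role here). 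Worse, the route you gesture at --- an ``iterative escape splitting'' aimed at terminating with an infinitesimal that is ``a genuine element of $F$'' --- cannot work even in principle: if $\mathbf{d}(F)=\mathbf{r}(F)$ and $a\in F$ were an infinitesimal, then $\mathbf{d}(a)$ and $\mathbf{r}(a)$ would both lie in the idempotent ultrafilter $\mathbf{d}(F)=\mathbf{r}(F)$, so $\mathbf{d}(a)\mathbf{r}(a)\neq 0$, contradicting $a^2=0$ (Lemma~\ref{lem:spooks}(2)). So no isotropy ultrafilter contains any infinitesimal, and Lemma~\ref{lem:george} applied inside local monoids does not by itself put anything of the required form into $F$.

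The missing idea, which is how the paper closes the case, is to stop looking for a single infinitesimal in $F$ and instead factor $F$ in the groupoid $\mathsf{G}(S)$. Writing $\mathbf{d}(F)=\mathbf{r}(F)=E^{\uparrow}$ with $E\subseteq\mathsf{E}(S)$ an ultrafilter, one uses $0$-simplifying: choose $e\in E$ and a non-zero $f$ with $e\perp f$; a pencil witnessing $e\preceq f$ together with primality of $E$ yields an element $x$ with $\mathbf{d}(x)\in E$ and $\mathbf{r}(x)\leq f$, and then $B=(xE^{\uparrow})^{\uparrow}$ is an ultrafilter with $\mathbf{r}(B)=E^{\uparrow}$ but $\mathbf{d}(B)\neq E^{\uparrow}$. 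Hence $F=(F\cdot B)\cdot B^{-1}$, and both factors have distinct source and range, so by your (correct) non-isotropy case each contains an infinitesimal; therefore $F$ contains a \emph{product of two} infinitesimals, which lies beneath a product of two special involutions, i.e.\ beneath an element of $\mathsf{Sym}(S)$. Without this (or an equivalent) argument, your proof establishes piecewise factorizability only over the non-isotropy ultrafilters and does not yield $S=(\mathsf{Sym}(S)^{\downarrow})^{\vee}$.
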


The proof of this result follows by Lemma~\ref{lem:welsh} and Lemma~\ref{lem:hengist} below.

\begin{lemma}\label{lem:welsh} Let $S$ be a Boolean inverse $\wedge$-monoid and let $G$ be a subgroup of $\mathsf{U}(S)$.
Then $S$ is piecewise factorizable with respect to $G$ if and only if each ultrafilter of $S$ contains an element of $G$.
\end{lemma}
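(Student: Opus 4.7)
The plan is to prove the two implications separately, leaning on two facts already in play: first, the bijection $s \mapsto V_s$ between elements of $S$ and compact open ``bisections'' of $\mathsf{G}(S)$ is order-reflecting, and second, ultrafilters on $S$ are prime with respect to finite compatible joins, so that $V_{a \vee b} = V_a \cup V_b$ whenever $a \sim b$ (this is the content of \cite[Lemma~3.20]{LL} alluded to in the preamble).

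For the forward direction, suppose $S$ is piecewise factorizable with respect to $G$ and let $A$ be an ultrafilter on $S$. Pick any $s \in A$ and write $s = \bigvee_{i=1}^{m} g_{i} e_{i}$ with $g_{i} \in G$ and $e_{i} \in \mathsf{E}(S)$. Because $A$ is prime with respect to this finite compatible join, some $g_{i} e_{i}$ lies in $A$, and then $g_{i} \in A$ by upward closure of the filter. This is the easy direction.

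For the converse, assume every ultrafilter of $S$ meets $G$, and let $s \in S$ be arbitrary. For each $A \in V_s$ choose $g_{A} \in A \cap G$. Since $A$ is a filter and $S$ is a $\wedge$-monoid, the meet $s \wedge g_{A}$ lies in $A$; in particular $s \wedge g_{A} \neq 0$ and $A \in V_{s \wedge g_{A}}$. Hence the family $\{V_{s \wedge g} : g \in G,\ s \wedge g \neq 0\}$ is an open cover of the compact set $V_s$, so finitely many suffice: $V_s = \bigcup_{i=1}^{m} V_{s \wedge g_{i}}$. The elements $s \wedge g_{i}$ all lie beneath the common element $s$, so they form a compatible set; let $t = \bigvee_{i=1}^{m} (s \wedge g_{i}) \leq s$. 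Using primeness, $V_{t} = \bigcup_{i=1}^{m} V_{s \wedge g_{i}} = V_{s}$, and injectivity of $s \mapsto V_s$ forces $t = s$. Writing $s \wedge g_{i} = g_{i} \mathbf{d}(s \wedge g_{i})$ then exhibits $s = \bigvee_{i=1}^{m} g_{i} e_{i}$, as required.

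I do not foresee a real obstacle. The most delicate point is simply the interplay between the prime property of ultrafilters and the compact open base: one must be sure that primeness applies to \emph{compatible} joins (not only to the Boolean algebra of idempotents) so that $V_{a \vee b} = V_a \cup V_b$ holds in full generality, and this is precisely what the earlier citations provide. Once that is in hand, the compactness argument runs cleanly and both directions fall out.
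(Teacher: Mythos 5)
Your proof is correct and follows essentially the same route as the paper: the forward direction uses primeness of ultrafilters plus upward closure, and the converse covers the compact set $V_{s}$ by the sets $V_{s\wedge g}$ with $g\in A\cap G$, extracts a finite subcover, and recovers $s$ as the compatible join via the correspondence between the order on $S$ and inclusion of the sets $V_{s}$. The only difference is that you spell out the final step ($V_{t}=V_{s}$ forces $t=s$, and $s\wedge g_{i}=g_{i}\mathbf{d}(s\wedge g_{i})$) which the paper leaves implicit.
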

\begin{proof} Suppose first that $S$ is piecewise factorizable.
Let $A$ be any ultrafilter and choose $s \in A$.
Then by assumption we may write 
$s = \bigvee_{i=1}^{m} s_{i}$ where for each $s_{i}$ there is a unit $g_{i} \in G$ such that $s_{i} \leq g_{i}$.
But every ultrafilter is prime and so $s_{i} \in A$ for some $i$.
It is now immediate that $g_{i} \in A$ and so each ultrafilter contains a unit from $G$.

To prove the converse, assume that every ultrafilter contains a unit from $G$.
Let $s \in S$ be any non-zero element.
We shall write $V_{s}$ as a union of compact-open sets.
Let $A \in V_{s}$.
Then there is some unit $g \in A \cap G$.
Thus $g \wedge s \in A$.
We may therefore write
$V_{s} = \bigcup V_{s_{i}}$ where the $s_{i}$ are those elements belonging to the elements of $V_{s}$ which are  beneath units in $G$.
By compactness, we may write $V_{s} = \bigcup_{i=1}^{m} V_{s_{i}}$.
It follows that $s = \bigvee_{i=1}^{m} s_{i}$ where each $s_{i}$ lies beneath a unit in $G$. 
\end{proof}

The following is the key to the proof of Proposition~\ref{prop:dory}.

\begin{lemma}\label{lem:hengist} Let $S$ be a $0$-simplifying Tarski monoid.
\begin{enumerate}
\item  Every ultrafilter contains an infinitesimal or the product of two infinitesimals.
\item Every ultrafilter contains a unit from $\mathsf{Sym}(S)$.
\end{enumerate}
\end{lemma}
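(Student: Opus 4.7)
The plan is to prove (1) by a case analysis on whether $\mathbf{d}(A)$ equals $\mathbf{r}(A)$, and then to derive (2) by lifting each infinitesimal produced by (1) to a special involution above it. Write $F_d := \mathbf{d}(A)$ and $F_r := \mathbf{r}(A)$ for the associated ultrafilters of idempotents. A basic observation, derived from the compatible decomposition $s = s\epsilon \vee s(\mathbf{d}(s) \wedge \bar\epsilon)$ and primality of ultrafilters in a $\wedge$-monoid, is that for $s \in A$ and an idempotent $\epsilon \leq \mathbf{d}(s)$ one has $s\epsilon \in A$ iff $\epsilon \in F_d$, with the symmetric statement for left multiplication by an element of $F_r$.

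Suppose first that $F_d \neq F_r$, and pick an idempotent $e$ with $e \in F_d$ and $\bar e \in F_r$. Fix any $s \in A$ and set $e_d := e \wedge \mathbf{d}(s) \in F_d$, then $e_r := \bar e \wedge \mathbf{r}(se_d) \in F_r$. By two applications of the basic observation, $t := e_r \cdot s \cdot e_d$ lies in $A$; since $\mathbf{d}(t) \leq e_d \leq e$ and $\mathbf{r}(t) \leq e_r \leq \bar e$ are orthogonal, $t$ is an infinitesimal inside $A$.

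The delicate case is $F_d = F_r =: F$. Fix $s \in A$, so $e_0 := \mathbf{d}(s) \wedge \mathbf{r}(s) \in F$ is non-zero; by atomlessness of the Tarski algebra and properness of $F$, some $W \in F$ satisfies $W \leq e_0$ and $W \neq 1$. Set $d := W \wedge s^{-1}Ws$: since $Ws \in A$ (basic observation, as $W \in F_r$ and $W \leq \mathbf{r}(s)$) we have $s^{-1}Ws = \mathbf{d}(Ws) \in F$, so $d \in F$ with $d \leq \mathbf{d}(s)$ and $sds^{-1} \leq W$. Consequently $d \vee sds^{-1} \leq W < 1$, hence $\overline{d \vee sds^{-1}} \neq 0$. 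Lemma~\ref{lem:darwin} ($0$-simplifying) then supplies a pencil $\{x_1, \ldots, x_m\}$ with $d = \bigvee \mathbf{d}(x_i)$ and $\mathbf{r}(x_i) \leq \overline{d \vee sds^{-1}}$; primality of $F$ selects some $x_i$ with $\mathbf{d}(x_i) \in F$, which we take as $v$. Then $v$ is an infinitesimal with $\mathbf{d}(v) \leq d \leq \mathbf{d}(s)$ and $\mathbf{r}(v) \perp d \vee sds^{-1}$; a direct computation using $\mathbf{d}(v) \leq \mathbf{d}(s)$ gives $\mathbf{d}(sv^{-1}) = \mathbf{r}(v)$ and $\mathbf{r}(sv^{-1}) = s\mathbf{d}(v)s^{-1} \leq sds^{-1}$, so the choice of $v$ forces $sv^{-1}$ to be an infinitesimal also. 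Finally $(sv^{-1})v = s\mathbf{d}(v) \in A$ (basic observation again), exhibiting an element of $A$ as a product of two infinitesimals.

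Part (2) follows from (1). If $t \in A$ from (1) is already an infinitesimal, Lemma~\ref{lem:spooks}(3) produces a special involution $g := t \vee t^{-1} \vee \overline{\mathbf{e}(t)}$ with $g \geq t$, so $g \in A$ by upward closure and $g \in \mathsf{Sym}(S)$ by definition. If instead $t = ba$ with $a, b$ infinitesimals, the same recipe yields special involutions $h \geq b$ and $k \geq a$; compatibility of the natural partial order with multiplication gives $hk \geq ba = t$, whence $hk \in A$, and $hk \in \mathsf{Sym}(S)$ as a product of special involutions. The main obstacle is the isotropy case of (1): one must engineer both $v$ and $sv^{-1}$ to be infinitesimals inside $A$, and this hinges on using atomlessness of the Tarski algebra to shrink $W$ within $F$ enough that $\overline{d \vee sds^{-1}}$ is non-zero.
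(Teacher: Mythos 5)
Your proof is correct, and while it follows the paper's two-case skeleton (splitting on whether $\mathbf{d}(A)=\mathbf{r}(A)$), the execution of the hard case is genuinely different. In the paper, when the two idempotent ultrafilters coincide in $F^{\uparrow}$, the argument stays at the level of the groupoid $\mathsf{G}(S)$: a pencil element with domain in $F$ is used to manufacture an auxiliary ultrafilter $B$ with $\mathbf{d}(B)\neq\mathbf{r}(B)$, one writes $A=(A\cdot B)\cdot B^{-1}$, and the first case (which itself invokes Hausdorffness of $\mathsf{G}(S)$ to separate $A^{-1}\cdot A$ from $A\cdot A^{-1}$) is applied twice to get infinitesimals in $A\cdot B$ and $B^{-1}$ whose product lies in $A$. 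You instead work entirely inside $S$: the same ingredients ($0$-simplifying via Lemma~\ref{lem:darwin} to get a pencil from $d$ to $\overline{d\vee sds^{-1}}$, primality of the ultrafilter to select $v$ with $\mathbf{d}(v)\in F$) produce an explicit factorization $s\mathbf{d}(v)=(sv^{-1})\cdot v$ of an element of $A$ as a product of two infinitesimals, with the computation $\mathbf{d}(sv^{-1})=\mathbf{r}(v)$, $\mathbf{r}(sv^{-1})\leq sds^{-1}$ doing the work; your ``basic observation'' ($s\epsilon\in A$ iff $\epsilon\in\mathbf{d}(A)$, proved from the decomposition $s=s\epsilon\vee s(\mathbf{d}(s)\bar\epsilon)$ and primality) also replaces the paper's unproved assertion that $fae\in A$ in the first case, and your first case avoids the appeal to Hausdorffness by separating the two ultrafilters of $\mathsf{E}(S)$ with a complementary pair $e,\bar e$ directly. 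What the paper's route buys is conceptual alignment with the groupoid picture (composition of ultrafilters); what yours buys is a shorter, self-contained, purely element-level argument that moreover handles idempotent ultrafilters uniformly rather than setting them aside. Your deduction of (2), lifting $v$ and $sv^{-1}$ (or the single infinitesimal) to special involutions via Lemma~\ref{lem:spooks}(3) and using $hk\geq ba$ plus upward closure of $A$, is exactly the intended reading of the paper's one-line conclusion.
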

\begin{proof} (1) We may restrict our attention to non-idempotent ultrafilters $A$.
Suppose first that  $A$ is an ultrafilter such that $A^{-1} \cdot A \neq A \cdot A^{-1}$.
Both  $A^{-1} \cdot A$ and $A \cdot A^{-1}$ are idempotent ultrafilters and are distinct by assumption.
Since the groupoid $\mathsf{G}(S)$ is Hausdorff there are compact-open sets $V_{s}$ and $V_{t}$ such that
$A^{-1} \cdot A \in V_{s}$ and $A \cdot A^{-1} \in V_{t}$ where $s \wedge t = 0$.
We may find idempotents $e$ and $f$ such that $A^{-1} \cdot A \in V_{e}$ and $A \cdot A^{-1} \in V_{f}$  and $e \wedge f = 0$.
Let $a \in A$ and put $b = fae$.
Then $b \in A$ and $b^{2} = 0$.

We now consider the case where $A$ is an ultrafilter such that $A^{-1} \cdot A = A \cdot A^{-1} = F^{\uparrow}$ where $F \subseteq E(S)$ is an ultrafilter.
We shall prove that there is an ultrafilter $G \subseteq \mathsf{E}(S)$ distinct from $F$ and an ultrafilter $B$ such that $F^{\uparrow} = B \cdot B^{-1}$ and $B^{-1} \cdot B = G^{\uparrow}$.
Then $B^{-1} \cdot B \neq B \cdot B^{-1}$ and $A = (A \cdot B) \cdot B^{-1}$.
By  the first case above, both $A \cdot B$ and $B^{-1}$ contain infinitesimals and so $A$ contains a product of infinitesimals.

Let $F \subseteq \mathsf{E}(S)$ be an ultrafilter.
Let $e \in F$.
Using the fact that $\mathsf{E}(S)$ is a Tarski algebra, we may write $e = e_{1} \vee e_{2}$ where $e_{1},e_{2} \neq 0$ and $e_{1} \perp e_{2}$.
Without loss of generality, we may assume that $e_{1} \in F$ and $e_{2} \notin F$.
We now relabel.
Let $e \in F$ and let $f \neq 0$ be such that $e \perp f$.
By assumption, $e \preceq f$.
Thus there are elements $x_{1}, \ldots, x_{m}$ such that
$e = \bigvee_{i=1}^{m} \mathbf{d}(x_{i})$ and $\mathbf{r}(x_{i}) \leq f$.
Since $F$ is an ultrafilter it is also a prime filter and so, relabelling if necessary, $\mathbf{d}(x_{1}) \in F$.
Consider the ultrafilter $C = (x_{1}F^{\uparrow})^{\uparrow}$.
Then $\mathbf{d}(C) = F^{\uparrow}$.
Put $G = E(\mathbf{r}(C))$.
Then $f \in G$.
It follows that $C \cdot C^{-1} \neq F^{\uparrow}$.

(2) This follows by part (1) and Lemma~\ref{lem:spooks}.
\end{proof}

\subsection{The support operator}

The main tool needed to work with inverse $\wedge$-monoids is the following concept introduced by Leech \cite{Leech}.
Let $S$ be an inverse monoid.
A function $\phi \colon S \rightarrow \mathsf{E}(S)$ is called a {\em fixed-point operator} if it satisfies the following two conditions:
\begin{description}

\item[{\rm (FPO1)}]  $s \geq \phi (s)$.

\item[{\rm (FPO2)}]  If $s \geq e$ where $e$ is any idempotent then $\phi (s) \geq e$.

\end{description}
We have included the proofs below for completeness.

\begin{lemma}\label{lem:fpo} Let $S$ be an inverse monoid.
\begin{enumerate}
\item $S$ has all binary meets if and only if it has a fixed-point operator.
\item In an inverse $\wedge$-monoid the fixed-point operator $\phi$ exists  and is unique being given by $\phi (a) = a \wedge 1$.
\item The fixed-point operator $\phi \colon S \rightarrow \mathsf{E}(S)$ is an order preserving idempotent function having $\mathsf{E}(S)$ as its fixed-point set
that satisfies both $\phi (ae) = \phi(a)e$ and $\phi(ea) = e \phi (a)$ for all $e \in \mathsf{E}(S)$ and all $a \in S$.
\item If $a$ is an infinitesimal then $\phi (a) = 0$.
\item If $a \sim b$ then $\phi (a \vee b) = \phi (a) \vee \phi (b)$.
\item $\phi (a \wedge b) = \phi (a) \wedge \phi (b)$. 
\end{enumerate}
\end{lemma}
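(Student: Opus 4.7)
The plan is to organize everything around the explicit formula $\phi(a) = a \wedge 1$ available in an inverse $\wedge$-monoid, and, for the converse direction of (1), to construct meets from a given fixed-point operator via $a \wedge b := a\phi(a^{-1}b)$.

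For the forward direction of (1), I will set $\phi(a) := a \wedge 1$; this satisfies (FPO1) trivially, and if an idempotent $e \leq a$, then $e \leq 1$ as well, so $e \leq a \wedge 1 = \phi(a)$, giving (FPO2). For the converse I define $a \wedge b := a\phi(a^{-1}b)$: this lies under $a$ because $\phi(a^{-1}b)$ is an idempotent, and lies under $b$ because, writing $e = \phi(a^{-1}b) \leq a^{-1}b$, we have $ae = aa^{-1}be \leq be \leq b$ (using that multiplication by an idempotent on the left sends elements below themselves). For the universal property, given $c \leq a$ and $c \leq b$, I will write $c = af$ with $f = c^{-1}c$; the equation $af = bf$ rearranges to $f = a^{-1}bf$, so $f \leq a^{-1}b$; being idempotent, $f \leq \phi(a^{-1}b)$, whence $c = af \leq a\phi(a^{-1}b)$. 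This argument simultaneously proves uniqueness in (2): (FPO1) forces any fixed-point operator $\phi'(a)$ to lie below the idempotent $a \wedge 1$, while (FPO2) forces the reverse inequality.

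Part (3) is then routine. The operator $\phi$ fixes idempotents by combining (FPO1) and (FPO2) applied to $e \leq e$; it is therefore idempotent as a function with $\mathsf{E}(S)$ as its fixed-point set; and it is order-preserving because $a \leq b$ makes $\phi(a)$ an idempotent below $b$, forcing $\phi(a) \leq \phi(b)$ by (FPO2). For $\phi(ae) = \phi(a)e$, I will note that $\phi(a)e$ is an idempotent (product of two commuting idempotents) lying below $ae$, hence $\phi(a)e \leq \phi(ae)$; conversely, $\phi(ae) \leq ae \leq a$ and $\phi(ae) \leq (ae)^{-1}(ae) = a^{-1}ae \leq e$, so the idempotent $\phi(ae)$ lies below both $\phi(a)$ and $e$, and therefore below their product. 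A symmetric argument yields $\phi(ea) = e\phi(a)$. Part (4) is immediate: if $e = \phi(a)$ is an idempotent below $a$, then $e = ae$, so $e = a^2e = 0$ when $a^2 = 0$. Part (6) reduces via $\phi(x) = x \wedge 1$ to the identity $(a \wedge b) \wedge 1 = (a \wedge 1) \wedge (b \wedge 1)$, which is associativity and idempotence of the meet.

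The step I expect to require the most attention is part (5). One inclusion is easy, as $\phi(a) \vee \phi(b)$ is an idempotent lying below $a \vee b$, hence below $\phi(a \vee b)$ by (FPO2). For the reverse, I will set $e = \phi(a \vee b)$ and compute $e = e \cdot e \leq (a \vee b)e = ae \vee be$, so $e = ae \vee be$. The key observation is that $ae$ is itself an idempotent: since $ae \leq ae \vee be = e$, I may write $ae = e'e$ for some idempotent $e' \leq e$, exhibiting $ae$ as a product of two commuting idempotents. Each of $ae$ and $be$ is then an idempotent below $a$, respectively $b$, so by (FPO2) we have $ae \leq \phi(a)$ and $be \leq \phi(b)$, giving $e \leq \phi(a) \vee \phi(b)$ as required. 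The delicate points here are the use of distributivity of multiplication over a compatible binary join, and the verification that the compatibility conditions $ae \sim be$ and $\phi(a) \sim \phi(b)$ (the latter automatic because idempotents are always compatible) actually hold so the displayed joins make sense in the ambient monoid.
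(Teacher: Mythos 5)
Your proposal is correct. For parts (1)--(4) and (6) it follows essentially the paper's route: part (1) uses $\phi(a)=a\wedge 1$ in one direction and, in the other, constructs the meet from the fixed-point operator by the formula $a\wedge b = a\phi(a^{-1}b)$, which is just the mirror image of the paper's $a\wedge b=\phi(ab^{-1})b$; parts (2), (3), (4) and (6) are the same arguments, and your two-inequality treatment of $\phi(ae)=\phi(a)e$ (namely $\phi(a)e\leq\phi(ae)$ by (FPO2), and $\phi(ae)\leq\phi(a)$ together with $\phi(ae)\leq(ae)^{-1}(ae)\leq e$ for the reverse) in fact supplies cleanly the steps that the paper's printed argument states somewhat garbled. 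The one genuine divergence is part (5): the paper disposes of it in one line by citing the distributivity of $\wedge$ over binary compatible joins in a Boolean inverse $\wedge$-monoid, \cite[Proposition~2.5(3)]{Lawson16}, to get $(a\vee b)\wedge 1=(a\wedge 1)\vee(b\wedge 1)$, whereas you give a self-contained proof: from $e:=\phi(a\vee b)\leq a\vee b$ you get $e=(a\vee b)e=ae\vee be$ using only the multiplicative distributivity built into the definition of a Boolean inverse monoid, observe that $ae,be$ are idempotents because they lie below the idempotent $e$, and then apply (FPO2) to conclude $e\leq\phi(a)\vee\phi(b)$, the reverse inequality being immediate. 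In effect you re-prove the needed special case of the cited distributivity law from the (FPO) axioms; this makes the lemma independent of the external reference at the cost of a slightly longer argument, and your compatibility worries there are harmless since $ae,be$ (once seen to be idempotents, or simply as restrictions of the compatible pair $a,b$) and $\phi(a),\phi(b)$ are automatically compatible.
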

\begin{proof} (1) 
Suppose first that $S$ has all binary meets.
For each $a \in S$ define $\phi (a) = a \wedge 1$.
Clearly, $\phi (a)$ is an idempotent.
Let $e$ be an idempotent such that $e \leq a$.
Then $e \leq1$ and so $e \leq \phi (a)$.
Conversely, suppose that a function $\phi$ exists.
Let $a,b \in S$ and consider the element $\phi (ab^{-1})b$.
Clearly, $\phi (ab^{-1})b \leq b$.
But by definition $\phi (ab^{-1}) \leq ab^{-1}$ and so  $\phi (ab^{-1})b \leq ab^{-1}b \leq a$.
Let $c \leq a,b$.
Then $cc^{-1} \leq ab^{-1}$ and so $cc^{-1} \leq \phi (ab^{-1})$.
Now $c \leq b$ and so $c = (cc^{-1})c \leq \phi (ab^{-1})b$.
It follows that $a \wedge b =  \phi (ab^{-1})b$.

(2) This is immediate by (1).

(3) We prove that $\phi (ae) = \phi(a)e$ for all $e \in \mathsf{E}(S)$.
By definition $\phi (ae) \leq ae$.
It also follows from this that $\phi (ae)e = \phi (ae)$.
Thus $\phi (ae) \leq a$ and so $\phi (ae) \leq ae$.
Now $\phi(a) \leq a$ and so $\phi (a)e \leq ae$.
By definition $\phi (a)e \leq \phi (ae)$.
We have therefore proved that $\phi (ae) = \phi (a)e$. 

(4) Let $e$ be an idempotent and $a$ an infinitesimal such that $e \leq a$.
Then $e \leq a^{2} = 0$ and so $e = 0$.

(5) By part (2), we have that $\phi (a \vee b) = (a \vee b) \wedge 1$.
But by a version of the distributivity law that holds for Boolean inverse $\wedge$-monoids,  \cite[Proposition 2.5(3)]{Lawson16},
we have that $(a \vee b) \wedge 1 = (a \wedge 1) \vee (b \wedge 1) = \phi (a) \vee \phi (b)$, as required.

(6) By part (2), we have that $\phi (a \wedge b) = (a \wedge b) \wedge 1 = (a \wedge 1) \wedge (b \wedge 1) = \phi (a) \wedge \phi (b)$.
\end{proof}

\noindent
{\bf Definition. }Let $S$ be a Boolean inverse $\wedge$-monoid.
Define $\sigma \colon S \rightarrow \mathsf{E}(S)$, the {\em support operator}, by 
$$\sigma (s) = \overline{\phi (s)}s^{-1}s.$$
The idempotent $\sigma (s)$ is called the {\em support of $s$}.

\begin{remark}{\em  We shall mainly be interested in the value $\sigma (s)$ where $s$ is a unit in which case $s^{-1}s = 1$
and so $\sigma (s) = \overline{\phi (s)}$.}
\end{remark}

We need the following notation to state our next result.
If $Y$ is a subset of a topological space then $\mathsf{cl}(Y)$ denotes the {\em closure} of that subset.
The following shows that the support operator in the algebraic sense is a reflection of the support in the usual topological sense.

\begin{proposition}\label{prop:tempest} Let $S$ be a fundamental Boolean inverse $\wedge$-monoid.
For each unit $g$, we have that
$$U_{\sigma (g)} = \mathsf{cl}(\{ F \colon F \in \mathsf{X}(S) \mbox{ and } gFg^{-1} \neq F   \}).$$
\end{proposition}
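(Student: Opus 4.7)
Plan of proof. Since $g$ is a unit we have $g^{-1}g = 1$, so the definition of the support simplifies to $\sigma(g) = \overline{\phi(g)}$. Because $\mathsf{E}(S)$ is a Boolean algebra and ultrafilters in $\mathsf{E}(S)$ are prime filters, an ultrafilter $F$ lies in $U_{\sigma(g)}$ exactly when $\phi(g) \notin F$. I will write $M = \{F \in \mathsf{X}(S) : gFg^{-1} \neq F\}$ throughout, so that the statement to prove is $U_{\sigma(g)} = \mathsf{cl}(M)$.

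The easy inclusion $\mathsf{cl}(M) \subseteq U_{\sigma(g)}$ comes down to showing $M \subseteq U_{\sigma(g)}$, since $U_{\sigma(g)}$ is clopen, hence closed. So suppose $F \notin U_{\sigma(g)}$; then $\phi(g) \in F$. By (FPO1), $\phi(g) \leq g$, so Lemma~\ref{lem:blue} tells us that $g$ fixes $U_{\phi(g)}$ pointwise under the natural action. In particular $gFg^{-1} = F$, so $F \notin M$. This proves $M \subseteq U_{\sigma(g)}$ and gives one direction.

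For the reverse inclusion $U_{\sigma(g)} \subseteq \mathsf{cl}(M)$, I fix $F \in U_{\sigma(g)}$ and show that every basic open neighborhood of $F$ meets $M$. A basic open neighborhood inside $U_{\sigma(g)}$ has the form $U_e$ for some non-zero $e \in F$ with $e \leq \overline{\phi(g)}$ (replace $e$ by $e\,\overline{\phi(g)}$ if necessary, which is non-zero because $\overline{\phi(g)} \in F$). So I argue by contradiction: assume $U_e \cap M = \emptyset$, i.e. $g$ fixes $U_e$ pointwise. By Lemma~\ref{lem:blue} this forces $e \leq g$, and then (FPO2) gives $e \leq \phi(g)$. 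Combined with $e \leq \overline{\phi(g)}$ this yields $e \leq \phi(g) \wedge \overline{\phi(g)} = 0$ (using Lemma~\ref{lem:basic}), contradicting $e \neq 0$. Thus $U_e \cap M \neq \emptyset$, proving $F \in \mathsf{cl}(M)$.

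I do not foresee a real obstacle: both inclusions reduce to direct applications of Lemma~\ref{lem:blue}, once one rewrites $\sigma(g)$ correctly using $g^{-1}g = 1$ and notes that the standard basis of clopen sets in $\mathsf{X}(S)$ is indexed by idempotents. The only point requiring a little care is the reduction to the case $e \leq \overline{\phi(g)}$ when testing neighborhoods of $F \in U_{\sigma(g)}$, which is why the proof proceeds by intersecting an arbitrary $e \in F$ with $\overline{\phi(g)}$ before applying the Lemma~\ref{lem:blue} contradiction.
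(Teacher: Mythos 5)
Your proof is correct and follows essentially the same route as the paper: both inclusions rest on Lemma~\ref{lem:blue} together with the observation that $\sigma(g)=\overline{\phi(g)}$ for a unit and that ultrafilters in $\mathsf{E}(S)$ are prime. The only cosmetic difference is that the paper, in the second inclusion, deduces $e\leq\phi(g)$ and hence $\phi(g),\sigma(g)\in F$ directly rather than first shrinking $e$ to $e\,\overline{\phi(g)}$; the two contradictions are the same.
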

\begin{proof} Put 
$$Y = \{ F \in \mathsf{X}(S) \colon \mbox{ and }  gFg^{-1} \neq F   \}.$$ 
If $F \in Y$ then by Lemma~\ref{lem:blue}, we have that $\phi (g) \notin F$ and so, since $F$ is a prime filter, we have that $\sigma (g) \in F$.
This shows that $Y \subseteq U_{\sigma (g)}$ and so $\mathsf{cl}(Y) \subseteq U_{\sigma (g)}$.
Let $F \in U_{\sigma (g)}$.
We show that every open set containing $F$ intersects $Y$
and it is enough to restrict attention to those open sets $U_{e}$ where $e \in F$.
Suppose that $Y \cap U_{e} = \emptyset$.
Then for every $G \in U_{e}$ we have that $gGg^{-1} = G$.
By Lemma~\ref{lem:blue}, it follows that $e \leq \phi (g)$.
But then $\phi (g), \sigma (g) \in F$, which is a contradiction. 
It follows that  $Y \cap U_{e} \neq \emptyset$, as required.
\end{proof}

\begin{lemma}\label{lem:cooper} Let $S$ be a Boolean inverse $\wedge$-monoid.
Then
$$s = \phi (s) \vee s \sigma (s)$$
is an orthogonal join, and $\phi (s \sigma (s)) = 0$.
\end{lemma}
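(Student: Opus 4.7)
The plan is to reduce everything to computations inside the Boolean algebra of idempotents, using the definition $\sigma(s) = \overline{\phi(s)}\,s^{-1}s$ together with the basic properties of $\phi$ from Lemma~\ref{lem:fpo}. The key preliminary observation I want is that $\phi(s) \leq s^{-1}s$: since $\phi(s)$ is idempotent with $\phi(s) \leq s$, the natural partial order gives $\phi(s) = s\phi(s) = \phi(s)s$, and hence $\phi(s) = \phi(s)\cdot s^{-1}s$. Viewed inside the principal order ideal $(s^{-1}s)^{\downarrow}$, which is a Boolean algebra with top $s^{-1}s$, the idempotent $\sigma(s)$ is therefore literally the complement of $\phi(s)$. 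In particular $\phi(s) \vee \sigma(s) = s^{-1}s$ and $\phi(s)\sigma(s) = \phi(s)\overline{\phi(s)}s^{-1}s = 0$.

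Next I would obtain the join formula by multiplying this Boolean decomposition on the left by $s$. Using the defining axiom of Boolean inverse monoids that multiplication distributes over binary compatible joins, together with $s\phi(s) = \phi(s)$ (from $\phi(s) \leq s$), I get
\[
s \;=\; s\cdot s^{-1}s \;=\; s\phi(s) \,\vee\, s\sigma(s) \;=\; \phi(s) \,\vee\, s\sigma(s).
\]
To see this join is orthogonal, I verify the two products $\phi(s)\cdot(s\sigma(s))^{-1}$ and $(s\sigma(s))^{-1}\cdot\phi(s)$ are zero: the first equals $\phi(s)\sigma(s)s^{-1} = 0$, and the second equals $\sigma(s)s^{-1}\phi(s) = \sigma(s)\phi(s) = 0$ after using $s^{-1}\phi(s) = s^{-1}s\,\phi(s) = \phi(s)$, which follows from $\phi(s) = s\phi(s)$ and $\phi(s) \leq s^{-1}s$.

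For the second assertion $\phi(s\sigma(s)) = 0$, I would simply invoke Lemma~\ref{lem:fpo}(3), which states $\phi(ae) = \phi(a)e$ for every idempotent $e$; applied with $a = s$ and $e = \sigma(s)$, this gives
\[
\phi(s\sigma(s)) \;=\; \phi(s)\sigma(s) \;=\; \phi(s)\,\overline{\phi(s)}\,s^{-1}s \;=\; 0,
\]
closing the proof.

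There is really no serious obstacle here; the result is a bookkeeping exercise. The only point requiring any care is establishing that $\phi(s) \leq s^{-1}s$ and therefore that $\sigma(s)$ is the Boolean-algebra complement of $\phi(s)$ inside $(s^{-1}s)^{\downarrow}$. Once that is in place, the orthogonal decomposition of $s^{-1}s$ lifts through left multiplication by $s$ and distributivity to give the claimed orthogonal decomposition of $s$, and the vanishing of $\phi(s\sigma(s))$ is immediate from the support definition.
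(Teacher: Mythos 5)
Your proof is correct and follows essentially the same route as the paper: decompose $s^{-1}s$ as $\phi(s)\vee\sigma(s)$ using $\phi(s)\leq s^{-1}s$, multiply on the left by $s$ with $s\phi(s)=\phi(s)$ and distributivity, and deduce $\phi(s\sigma(s))=0$ from Lemma~\ref{lem:fpo}(3). The only difference is that you spell out the orthogonality computation explicitly, which the paper leaves implicit.
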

\begin{proof} Let $s \in S$.
Observe that $1 = \phi (s) \vee \overline{\phi (s)}$.
Multiplying on the right by $s^{-1}s$ and observing that $\phi (s) \leq s^{-1}s$,
we get that $s^{-1}s = \phi (s) \vee \sigma (s)$.
Multiplying on the left by $s$ and observing that $s \phi (s) = \phi (s)$,
we get that $s = \phi (s) \vee s \sigma (s)$.
The final claim is immediate by part (3) of Lemma~\ref{lem:fpo}.
\end{proof}

\begin{lemma}\label{lem:jerry} Let $S$ be a Boolean inverse $\wedge$-monoid.
Let $g$ and $h$ be units.
\begin{enumerate}

\item $\sigma (g) = 0$ if and only if $g = 1$.

\item $\sigma (g^{-1}) = \sigma (g)$.

\item $\sigma (gh) \leq \sigma (g) \vee \sigma (h)$.

\item $\sigma (ghg^{-1}) = g \sigma (h) g^{-1}$. 

\item $\sigma (g^{2}) \leq \sigma (g)$.

\item If $\sigma (g)\sigma (h) = 0$ then $[g,h] = 1$.

\end{enumerate}
\end{lemma}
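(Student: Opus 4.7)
My plan is to work through the six items in order, using the identity $\sigma(g)=\overline{\phi(g)}$ (valid because $g$ a unit gives $g^{-1}g=1$) and the FPO axioms together with the fact (Lemma~\ref{lem:basic}) that $e\le f\iff e\bar f=0$. Items (1)--(5) should be essentially direct; the substance is in (6).

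For (1), since $\sigma(g)=\overline{\phi(g)}$, vanishing of $\sigma(g)$ is equivalent to $\phi(g)=1$, i.e.\ $1\le g$, which for a unit forces $g=1$. For (2), Lemma~\ref{lem:fpo}(2) gives $\phi(g)=g\wedge 1$; using that an idempotent $e\le g$ satisfies $e=e^{-1}\le g^{-1}$ and then invoking FPO2 twice yields $\phi(g)=\phi(g^{-1})$, so $\sigma(g)=\sigma(g^{-1})$. For (3), by Lemma~\ref{lem:basic} it suffices to show $\phi(g)\phi(h)\le\phi(gh)$; setting $e=\phi(g)\phi(h)$ one has $e\le g,h$ so $ge=eg=e$ and $he=eh=e$, whence $ghe=e$, i.e.\ $e\le gh$, and FPO2 concludes. (5) is the special case $h=g$. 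For (4), conjugation by a unit is a Boolean automorphism of $\mathsf E(S)$, so complementation commutes with conjugation; it is therefore enough to prove $\phi(ghg^{-1})=g\phi(h)g^{-1}$. The inequality $g\phi(h)g^{-1}\le\phi(ghg^{-1})$ comes from FPO2 applied to $g\phi(h)g^{-1}\le ghg^{-1}$, and the reverse inequality comes from running the same argument on $g^{-1}\cdot ghg^{-1}\cdot g=h$.

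The main work is (6), and I expect this to be the principal obstacle because the monoid is not assumed fundamental, so I cannot reduce $[g,h]=1$ to an identity on idempotents. The key preliminary fact I will establish is
\[
g\,\sigma(g)\,g^{-1}=\sigma(g),
\]
which says $g\sigma(g)=\sigma(g)g$. To see this, I apply Lemma~\ref{lem:cooper} to $g^{-1}$ and use (2) to write $g^{-1}=\phi(g)\vee g^{-1}\sigma(g)$; multiplying on the left by $g\sigma(g)$ and using $\sigma(g)\phi(g)=0$ kills one summand, leaving $g\sigma(g)g^{-1}=g\sigma(g)g^{-1}\sigma(g)\le\sigma(g)$, and symmetry (applied to $g^{-1}$) gives the reverse. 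Symmetrically, $h\sigma(h)=\sigma(h)h$.

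Given these, I use $\sigma(g)\sigma(h)=0$ together with Lemma~\ref{lem:basic} to get $\sigma(g)\le\phi(h)\le h$ and $\sigma(h)\le\phi(g)\le g$; consequently $h$ fixes $\sigma(g)$ (so $h\sigma(g)=\sigma(g)=\sigma(g)h$) and dually $g\sigma(h)=\sigma(h)=\sigma(h)g$. Writing the orthogonal decomposition
\[
1=\phi(g)\phi(h)\ \vee\ \sigma(h)\ \vee\ \sigma(g),
\]
(which holds since $\sigma(g)\sigma(h)=0$, $\phi(g)\sigma(h)=\sigma(h)$ and $\sigma(g)\phi(h)=\sigma(g)$), I multiply $gh$ and $hg$ through this orthogonal join and check the three components agree: on $\phi(g)\phi(h)$ both $g$ and $h$ act as the identity so both products give $\phi(g)\phi(h)$; on $\sigma(g)$, $gh\sigma(g)=g\sigma(g)$ while $hg\sigma(g)=h\sigma(g)g=\sigma(g)g=g\sigma(g)$; the case $\sigma(h)$ is symmetric. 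Therefore $gh=hg$, as required.
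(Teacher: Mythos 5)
Your proof is correct and takes essentially the same route as the paper: items (1)--(5) follow the paper's arguments almost verbatim, and item (6) rests, as in the paper, on $\sigma(g)\le\phi(h)$, $\sigma(h)\le\phi(g)$ and a symmetric orthogonal decomposition of the product. The only cosmetic difference is that you split $1=\phi(g)\phi(h)\vee\sigma(g)\vee\sigma(h)$ and invoke $g\sigma(g)g^{-1}=\sigma(g)$ (which is in fact immediate from your part (4) with $h=g$, so the detour through Lemma~\ref{lem:cooper} is unnecessary), whereas the paper writes $g=\phi(g)\vee\sigma(g)g\sigma(g)$ and $h=\phi(h)\vee\sigma(h)h\sigma(h)$ and expands.
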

\begin{proof}  (1) One direction is immediate since $\phi (1) = 1$.
To prove the converse, suppose that $\phi (g) = 1$.
Then $1 \leq g$ and so $g = 1$.

(2) For any idempotent $e$ we have that $e \leq g$ if and only if $e \leq g^{-1}$.
It follows that $\phi (g) = \phi (g^{-1})$.

(3) From $\phi (g) \leq g$ and $\phi (h) \leq h$ we get that $\phi (g)\phi (h) \leq gh$.
Thus $\phi (g)\phi (h) \leq \phi (gh)$.
The result now follows by taking complements.

(4) From $\phi (h) \leq h$ we get that $g \phi (h) g^{-1} \leq ghg^{-1}$.
From $\phi (ghg^{-1}) \leq ghg^{-1}$ we get that $g^{-1} \phi (ghg^{-1})g \leq h$ and so $g^{-1} \phi (ghg^{-1})g \leq \phi (h)$.
Thus $\phi (ghg^{-1}) \leq g \phi (h) g^{-1}$.
It follows that $g \phi (ghg^{-1})g^{-1} = \phi (ghg^{-1})$.
Take complements to get the desired result.

(5) This is immediate by (3).

(6) From $1 =  \phi (g) \vee \overline{\phi (g)}$ and $g = 1g1$,
it quickly follows that
$$g = \overline{\phi (g)} g \overline{\phi (g)} \vee \phi (g).$$
In addition, standard Boolean algebra shows that
$\overline{\phi (g)}  \leq \phi (h)$ and $\overline{\phi (h)} \leq \phi (g)$.
We calculate
$$gh = \phi (g) \phi (h) \vee \overline{\phi (h)} h \overline{\phi (h)} \vee \overline{\phi (g)} g \overline{\phi (g)}.$$
By symmetry, this is equal to $hg$.
\end{proof}

Part (6) of Lemma~\ref{lem:jerry} is of fundamental importance to our calculations later.

The following lemma provides an avenue for deciding whether a unit is trivial or not.

\begin{lemma}\label{lem:fudge} Let $S$ be a fundamental Boolean inverse $\wedge$-monoid.
\begin{enumerate}
\item The unit $g$ is the identity if and only if $g$ fixes the set $\sigma(g)^{\downarrow}$ pointwise under the natural action.
\item Let $g$ be a non-trivial unit. Then for each non-zero idempotent $e \leq \sigma (g)$ there exists an idempotent $f \leq e$ such that $gfg^{-1} \neq f$.
\end{enumerate}
\end{lemma}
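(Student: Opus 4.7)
The plan is to prove (2) first and then immediately deduce (1) from it together with Lemma~\ref{lem:jerry}(1). Part (2) is the real content; part (1) will follow by taking $e = \sigma(g)$ in part (2).

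For part (2), the natural move is a proof by contradiction. Suppose there exists a non-zero idempotent $e \leq \sigma(g)$ such that $gfg^{-1} = f$ for \emph{every} idempotent $f \leq e$. This is exactly the statement that $g$ fixes the set $e^{\downarrow}$ pointwise under the natural action. By the equivalence (2)$\Leftrightarrow$(3) of Lemma~\ref{lem:blue} (which applies because $S$ is fundamental), this forces $e \leq g$. By the defining property (FPO2) of the fixed-point operator, we then have $e \leq \phi(g)$. On the other hand, since $g$ is a unit we have $s^{-1}s = 1$ in the definition of $\sigma$, so $\sigma(g) = \overline{\phi(g)}$, and the hypothesis $e \leq \sigma(g)$ gives $e \leq \overline{\phi(g)}$. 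Combining, $e \leq \phi(g) \wedge \overline{\phi(g)} = 0$ by Lemma~\ref{lem:basic}, contradicting $e \neq 0$.

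For part (1), the forward implication is trivial since the identity fixes every idempotent. For the converse, assume $g$ is a non-trivial unit; I want to exhibit some element of $\sigma(g)^{\downarrow}$ that is not fixed. By Lemma~\ref{lem:jerry}(1), $g \neq 1$ implies $\sigma(g) \neq 0$, so $\sigma(g)$ itself is a non-zero idempotent with $\sigma(g) \leq \sigma(g)$, and applying part (2) to $e = \sigma(g)$ produces an idempotent $f \leq \sigma(g)$ with $gfg^{-1} \neq f$. This violates the pointwise fixing of $\sigma(g)^{\downarrow}$, completing the contrapositive.

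I do not expect any serious obstacle here: the heart of the argument is just combining Lemma~\ref{lem:blue}, the defining inequality of $\phi$, and the orthogonality $\phi(g) \wedge \sigma(g) = 0$ that comes for free from $\sigma(g) = \overline{\phi(g)}$ in the unit case. The only subtlety worth flagging is making sure Lemma~\ref{lem:blue} is applied correctly: it requires $S$ fundamental, which is part of our hypothesis, and it is being invoked for the unit $g$ and the idempotent $e$ to move from the action-theoretic statement ``$g$ fixes $e^{\downarrow}$'' to the algebraic statement ``$e \leq g$.''
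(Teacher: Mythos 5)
Your proof is correct, and part (2) is essentially the paper's argument in compressed form: the paper does not cite Lemma~\ref{lem:blue} there but instead re-derives its implication (2)$\Rightarrow$(3) on the spot (from ``$g$ fixes $e^{\downarrow}$ pointwise'' it gets $ge \in \mathsf{U}(eSe)$, uses Lemma~\ref{lem:pros} and Proposition~\ref{prop:fc} to conclude $ge = e$, hence $e \leq g$), and then, just as you do, passes to $e \leq \phi(g)$ and hits the contradiction $e \leq \sigma(g)\phi(g) = 0$; invoking Lemma~\ref{lem:blue} directly, as you do, is a legitimate shortcut. Where you genuinely diverge is part (1): the paper proves it directly, writing an arbitrary idempotent as the orthogonal join $e = \phi(g)e \vee \sigma(g)e$, noting that $g$ fixes the first piece by Lemma~\ref{lem:blue} (since $\phi(g) \leq g$) and the second by hypothesis, so $g$ acts trivially on all of $\mathsf{E}(S)$ and equals $1$ by the faithfulness of the natural action (Proposition~\ref{prop:fc}); you instead deduce (1) from (2) together with Lemma~\ref{lem:jerry}(1) ($\sigma(g)=0$ iff $g=1$). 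Your route is shorter and perfectly sound (there is no circularity, since your proof of (2) does not use (1)), at the mild cost of making (1) depend on (2) and on Lemma~\ref{lem:jerry}, whereas the paper keeps the two parts independent. One cosmetic point: the final step $e \leq \phi(g)$ and $e \leq \overline{\phi(g)}$ gives $e \leq \phi(g)\,\overline{\phi(g)} = 0$ by plain Boolean algebra; Lemma~\ref{lem:basic} is not really what is being used there.
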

\begin{proof} (1) Only one direction needs proving.
So suppose that  $g$ fixes the set $\sigma(g)^{\downarrow}$ pointwise under the natural action.
Let $e$ be an arbitrary idempotent.
Then $e = \phi (g)e \vee \sigma (g)e$ an orthogonal join.
Put $e' = \phi (g)e$ and $f = \sigma (g)e$.
Then $geg^{-1} = ge'g^{-1} \vee gfg^{-1}$.
By Lemma~\ref{lem:blue}, we have that $ge'g^{-1} = e'$ and by assumption $gfg^{-1} = f$.
Thus $geg^{-1} = e$.
By Proposition~\ref{prop:fc}, it follows that $g = 1$.

(2)  Let $e \leq \sigma (g)$.
Suppose that $g$ fixes $e^{\downarrow}$ pointwise under conjugation.
Then, in particular, $ge = eg$ so that $ge \in \mathsf{U}(eSe)$.
But $ge$ fixes all the idempotents in $e^{\downarrow}$ pointwise under conjugation
and $eSe$ is fundamental by Lemma~\ref{lem:pros}.
Thus by Proposition~\ref{prop:fc} it follows that $ge = e$.
Since $e \leq g$ we have that $e \leq \phi (g)$.
But then $e \leq \sigma (g)\phi (g) = 0$, which is a contradiction.
\end{proof}

\subsection{The axioms}

We now state three axioms  that will play a crucial r\^ole in proving the spatial realization theorem.
They are translations (and modifications) into our language of those given in \cite[Definition 3.1]{Matui13}.
See also \cite{Fremlin}.

\begin{itemize}

\item[{\rm (F1)}] {\em Enough special involutions. }For each non-zero idempotent $e$ there are a finite number of special involutions $t_{1}, \ldots, t_{m}$ such that
$e = \bigvee_{i=1}^{m} \sigma (t_{i})$.

\item[{\rm (F2)}] {\em Shrinking.} For each involution $t$ and non-zero idempotent $e \leq \sigma (t)$ there exists a special involution $g$
such that 
$\sigma (g) \leq \mathbf{e}(te)$
and 
$\sigma (g) \leq \phi (tg)$.

\item[{\rm (F3)}] {\em Enough special $3$-cycles.} For each non-zero idempotent $e$, there exists a special $3$-cycle $g$ such that $\sigma (g) \leq e$.

\end{itemize}

\begin{proposition}\label{prop:hope} 
In a simple Tarski monoid, the axioms (F1), (F2) and (F3) all hold.
\end{proposition}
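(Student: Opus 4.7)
The plan is to verify each of the three axioms separately, since each one lines up cleanly with a different tool developed in the excerpt: (F1) will come from compactness of $\mathsf{X}(S)$ together with Lemma~\ref{lem:george}; (F2) will come from carefully building an infinitesimal out of a restriction of $t$; and (F3) is essentially a direct translation of Lemma~\ref{lem:horsa}.

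For (F1), let $e \neq 0$. For each ultrafilter $F \in U_e$, I would apply Lemma~\ref{lem:george} to produce an infinitesimal $a_F \in eSe$ with $\mathbf{d}(a_F) \in F$. Since $a_F \in eSe$, we have $\mathbf{e}(a_F) \leq e$, and since $\mathbf{d}(a_F) \in F$, we have $\mathbf{e}(a_F) \in F$, so $F \in U_{\mathbf{e}(a_F)} \subseteq U_e$. The sets $U_{\mathbf{e}(a_F)}$ therefore cover the compact set $U_e$, and a finite subcover together with Stone duality gives $e = \bigvee_{i=1}^{m} \mathbf{e}(a_{F_i})$. The special involutions $t_i = a_{F_i} \vee a_{F_i}^{-1} \vee \overline{\mathbf{e}(a_{F_i})}$ then satisfy $\sigma(t_i) = \mathbf{e}(a_{F_i})$ (since the largest idempotent below $t_i$ is visibly $\overline{\mathbf{e}(a_{F_i})}$, infinitesimals themselves admitting only the zero idempotent below), yielding the desired decomposition.

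For (F2), take an involution $t$ and $0 \neq e \leq \sigma(t)$. Since $\sigma(t) \neq 0$, Lemma~\ref{lem:jerry}(1) ensures $t \neq 1$, so Lemma~\ref{lem:fudge}(2) applied to the unit $t$ and the idempotent $e$ yields an idempotent below $e$ moved by $t$, and then Lemma~\ref{lem:tea-time}(1) shrinks it to $0 \neq f' \leq e$ with $f' \perp tf't^{-1} = tf't$. Setting $a = tf'$ gives $\mathbf{d}(a) = f'$ and $\mathbf{r}(a) = tf't$, which are orthogonal, so by Lemma~\ref{lem:spooks}(2) $a$ is an infinitesimal. Let $g = a \vee a^{-1} \vee \overline{\mathbf{e}(a)}$ be the associated special involution, so $\sigma(g) = \mathbf{e}(a) = f' \vee tf't$. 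From $f' \leq e$ we get $tf't \leq tet$, whence $\sigma(g) \leq e \vee tet = \mathbf{e}(te)$, giving the first condition. For the second, a direct expansion using $t^2 = 1$ gives
\[
tg = f' \vee tf't \vee t\overline{\mathbf{e}(a)} \;\geq\; f' \vee tf't = \mathbf{e}(a),
\]
so $\sigma(g) = \mathbf{e}(a) \leq \phi(tg)$ by the defining property of $\phi$.

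For (F3), given a non-zero idempotent $e$, Lemma~\ref{lem:horsa} produces a $2$-infinitesimal $(b,a)$ with $a,b \in eSe$, and Lemma~\ref{lem:julie}(2) promotes it to a special $3$-cycle $g = a \vee b \vee c \vee \bar{e}'$ where $e' = e_1 \vee e_2 \vee e_3$ and each $e_i \leq e$, so $e' \leq e$. Exactly as in the (F1) analysis, the only idempotent lying under any of $a,b,c$ is $0$, so $\phi(g) = \overline{e'}$ and hence $\sigma(g) = e' \leq e$, as required.

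The most delicate step is (F2), where the right move is to guess that $a = tf'$ is the correct infinitesimal; once that guess is made, the two verifications of the support inequalities are routine Boolean-inverse algebra. The compactness argument in (F1) and the direct citation of Lemma~\ref{lem:horsa} for (F3) are each essentially a single observation.
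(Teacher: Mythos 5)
Your proposal is correct and follows essentially the same route as the paper: (F1) via Lemma~\ref{lem:george} applied to each ultrafilter in $U_e$ plus compactness of the Stone space, (F2) by using Lemma~\ref{lem:fudge} and Lemma~\ref{lem:tea-time} to find $f'\leq e$ with $f'\perp tf't$ and building the special involution from the infinitesimal $tf'$, and (F3) by combining Lemma~\ref{lem:horsa} with Lemma~\ref{lem:julie}. The only cosmetic difference is in (F1), where you get $\mathbf{e}(a_F)\in F$ directly from upward closure rather than the paper's primality argument; this is an equivalent observation.
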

\begin{proof} 

(F1) holds. Let $e \neq 0$ be any idempotent and let $F \subseteq \mathsf{E}(S)$ be an ultrafilter containing $e$.
By Lemma~\ref{lem:george}, there is an infinitesimal $a$ such that $a \in eSe$ and $a^{-1}a \in F$.
By Lemma~\ref{lem:spooks}, the element 
$$t = a \vee a^{-1} \vee \overline{\mathbf{e}(a)}$$
is a special involution and $\phi (t) = \overline{\mathbf{e}(a)}$
by parts (4) and (5) of Lemma~\ref{lem:fpo}.
Thus $\sigma (t) = \mathbf{e}(a) \leq e$.
Since $F$ is an ultrafilter in $\mathsf{E}(S)$ either $\phi (t) \in F$ or $\sigma (t) \in F$. 
But we cannot have $\phi (t) \in F$ since then $\mathbf{d}(a), \overline{\mathbf{e}(a)} \in F$ which would give $0 \in F$.
It follows that $\sigma (t) \in F$, as required.
Denote by $I$ the set of all special involutions $t$ such that $\sigma (t) \leq e$.
We have proved that $U_{e} = \bigcup_{t \in I} U_{\sigma (t)}$.
By compactness, there is a finite subset of $I$ that does the job.
Axiom (F1) now follows.

(F2) holds.
Let $t$ be an involution and let $0 \neq e \leq \sigma (t)$.
Since $t$ is not trivial, there exists $e' \leq e$ such that $e' \neq te't$ by Lemma~\ref{lem:fudge}.
By Lemma~\ref{lem:tea-time} there is an idempotent $f \leq e'$ such that $f \perp tft$.
Thus $tf$ is an infinitesimal.
The element 
$$g = tf \vee ft \vee \overline{\mathbf{e}(tf)}$$
is a special involution where $\sigma (g) = \mathbf{e}(tf) \leq \mathbf{e}(te)$.
Observe that $tg = \mathbf{e}(tf) \vee t \, \overline{\mathbf{e}(tf)}$.
Thus $\sigma (g) \leq tg$ and so $\sigma (g) \leq \phi (tg)$.

(F3) holds. Let $e$ be a non-zero idempotent.
By Lemma~\ref{lem:horsa}, we may find infinitesimals $a,b \in eSe$ such that $(b,a)$ is a $2$-infinitesimal.
Put $g = a \vee b \vee (ba)^{-1} \vee \overline{\mathbf{e}(b,a)}$, which is a special $3$-cycle by Lemma~\ref{lem:julie},
and observe that $\sigma (g) = \mathbf{e}(b,a) \leq e$.
\end{proof}

\subsection{Local subgroups}

{\em Let $S$ be a Tarski monoid.
Throughout this section we let $G$ be any subgroup of $\mathsf{U}(S)$ which contains $\mathsf{Sym}(S)$.}

For each idempotent $e \in S$ define
$$U(e) = \{g \in G \colon \sigma (g) \leq e \},$$
called $U(e)$ the {\em local subgroup at $e$}.\\

\begin{lemma}\label{lem:tom} Let $S$ be a simple Tarski monoid.
\begin{enumerate}

\item $U(e)$ is a  subgroup of $G$.

\item $e \leq f$ if and only if $U(e) \subseteq U(f)$.

\end{enumerate}
\end{lemma}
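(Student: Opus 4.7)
The plan is to treat the two parts separately; part (1) should be a short application of the properties of $\sigma$ collected in Lemma~\ref{lem:jerry}, while part (2) has one nontrivial direction that will need the supply of special involutions guaranteed by axiom (F1).

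For part (1), I would check the three subgroup conditions for $U(e)$. The identity: $\phi(1) = 1$ forces $\sigma(1) = \overline{1} = 0 \leq e$, so $1 \in U(e)$. Closure under inverses is immediate from $\sigma(g^{-1}) = \sigma(g)$, which is part (2) of Lemma~\ref{lem:jerry}. Closure under products follows from part (3) of Lemma~\ref{lem:jerry}: if $\sigma(g), \sigma(h) \leq e$ then $\sigma(gh) \leq \sigma(g) \vee \sigma(h) \leq e$. Nothing beyond these inequalities is required.

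For part (2), the forward implication is immediate: if $e \leq f$ then any $g$ with $\sigma(g) \leq e$ automatically satisfies $\sigma(g) \leq f$. For the converse, I would argue by contrapositive. Suppose $e \not\leq f$. By Lemma~\ref{lem:basic}, this means $e \bar{f} \neq 0$. Setting $e' = e\bar{f}$, we have a non-zero idempotent with $e' \leq e$ and $e' \leq \bar{f}$. Since $S$ is a simple Tarski monoid, axiom (F1) holds by Proposition~\ref{prop:hope}, so there exist special involutions $t_{1}, \dots, t_{m}$ with $e' = \bigvee_{i=1}^{m} \sigma(t_{i})$. Because $e' \neq 0$, at least one $\sigma(t_{i}) \neq 0$. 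Pick such a $t = t_{i}$, which lies in $\mathsf{Sym}(S) \subseteq G$. Then $\sigma(t) \leq e' \leq e$, so $t \in U(e)$. However, $\sigma(t) \leq e' \leq \bar{f}$ combined with Lemma~\ref{lem:basic} yields $\sigma(t) \not\leq f$ (since $\sigma(t) \neq 0$), so $t \notin U(f)$. Hence $U(e) \not\subseteq U(f)$, which is the contrapositive we wanted.

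The only place any subtlety intrudes is ensuring that the special involution furnished by (F1) is genuinely non-trivial, i.e.\ that $\sigma(t) \neq 0$; this is automatic because a special involution is by definition an involution (so $t \neq 1$), and Lemma~\ref{lem:jerry}(1) then gives $\sigma(t) \neq 0$. The hypothesis $\mathsf{Sym}(S) \subseteq G$ is used precisely to ensure that the special involution produced by (F1) actually lies in $G$ and thus is eligible to be an element of $U(e)$.
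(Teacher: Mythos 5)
Your proof is correct and follows essentially the paper's argument: part (1) is exactly the application of parts (1)--(3) of Lemma~\ref{lem:jerry}, and part (2) reduces, via Lemma~\ref{lem:basic}, to producing a non-trivial element of $G$ whose support lies below $e\bar{f}$. The only difference is the source of that element: the paper invokes axiom (F3) to obtain a special $3$-cycle $t$ with $\sigma(t)\leq e\bar{f}$, whereas you invoke (F1) to obtain a special involution; both axioms hold in a simple Tarski monoid by Proposition~\ref{prop:hope}, both elements lie in $G$ since $\mathsf{Sym}(S)\leq G$, and in both cases non-triviality plus Lemma~\ref{lem:jerry}(1) gives $\sigma(t)\neq 0$, so nothing of substance changes (your contrapositive phrasing versus the paper's contradiction is likewise cosmetic). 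Your extra remark that one could instead note that a non-zero join must have a non-zero joinand is a harmless alternative justification of the same point.
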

\begin{proof} (1) This follows by parts (1), (2) and (3) of Lemma~\ref{lem:jerry}.

(2)  Only one direction needs proving.
Let $U(e) \subseteq U(f)$.
We use Lemma~\ref{lem:basic} and prove $e \leq f$ by showing that $e\bar{f} = 0$.
Suppose that $e \bar{f} \neq 0$.
By (F3), there exists a special $3$-cycle $t$ such that $\sigma (t) \leq e \bar{f}$.
Clearly, $t \in U(e)$.
If $t \in U(f)$ then $\sigma (t) \leq f$.
Thus $\sigma (t) = 0$ which implies that $t = 1$, a contradiction since $t$ is non-trivial.
It follows that $t \notin U(f)$, which is a contradiction.
\end{proof}

The following definitions are from \cite{Matui13}.
Let $t$ be a fixed involution in $G$.
Denote by $C_{t}$ the centralizer of $t$ in $G$.
Define\footnote{We have changed the notation from that in \cite{Matui13} to avoid a clash with that used for the group of units.} 
$$Z_{t} = \{ s \in C_{t} \colon s^{2} = 1, \, (\forall a \in C_{t}) [s,asa^{-1}] = 1 \}.$$
Define
$$S_{t} = \{ a^{2} \colon a \in G, (\forall s \in Z_{t}) [a,s] = 1\}.$$
Define
$$W_{t} = \{a \in G \colon  (\forall b \in S_{t})    [a,b] = 1 \}.$$
Clearly, $t \in C_{t}$, and $t \in Z_{t}$.

The following portmanteau lemma does most of the heavy lifting needed in this paper and translates Matui's proofs into algebraic language.

\begin{lemma}\label{lem:Brexit} Let $S$ be a simple Tarski monoid and let $t$ be an involution in $G$.
\begin{enumerate}

\item If $s$ is an involution in $G$ and $f$ an idempotent such that $f \leq \phi (t)$, where $sfs \neq f$, then there exists a special $3$-cycle $a \in C_{t}$ such that $[s,asa^{-1}] \neq 1$.

\item  If $s \in Z_{t}$ then $\sigma (s) \leq \sigma (t)$.

\item Let $0 \neq e \leq \sigma (t)$.
Then there exists $s \in Z_{t}$ such that 
$\sigma (s) \leq \mathbf{e}(te)$ and $\sigma (s) \leq \phi (ts)$.

\item Let $e$ be any non-zero idempotent such that $e \sigma (t) = 0$.
Then there exists a special $3$-cycle $a$ such that $\sigma (a) \leq e$ and $a^{2} \in S_{t}$.

\item Let $a \in G$ commute with every element in $Z_{t}$.
If $\sigma (t) \in F$ is an ultrafilter such that $tFt \neq F$ then $a^{2}Fa^{-2} = F$.

\item If $b \in S_{t}$ then $\sigma (t) \leq \phi (b)$.

\end{enumerate}
\end{lemma}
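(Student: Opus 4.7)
The plan is to attack the six parts in a cascade, with part (1) doing the geometric heavy lifting, parts (2) and (4) flowing from it, part (3) being the main technical step, and parts (5)--(6) assembling everything into the desired rigidity statements.

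For part (1), I would first apply Lemma~\ref{lem:tea-time}(1) to obtain $0 \neq f' \leq f$ with $f' \perp sf's$, and then shrink to $f' \wedge \sigma(s)$ (non-zero because $s$ moves $f'$) to additionally arrange $f' \leq \sigma(s)$. Since $f' \leq \phi(t)$, Lemma~\ref{lem:horsa} inside $f'Sf'$ delivers a $2$-infinitesimal whose induced special $3$-cycle $a$, via Lemma~\ref{lem:julie}, has $\sigma(a) \leq f' \leq \phi(t)$. Lemma~\ref{lem:jerry}(6) then places $a$ in $C_t$, since $\sigma(a)\sigma(t)=0$. To exhibit non-commutativity, I would compute the action of $[s,asa^{-1}]$ on one of the three orthogonal idempotents $e_1,e_2,e_3 \leq f'$ cycled by $a$: because each $e_i \leq f'$ but $se_is$ lies in $sf's \perp f'$, conjugation by $s$ pushes the $e_i$ outside the region where $a$ acts non-trivially, and tracking the four conjugations shows that $[s,asa^{-1}]$ sends $e_1$ to $e_2 \neq e_1$.

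Part (2) is the contrapositive of part (1): if $\sigma(s) \not\leq \sigma(t)$, then $e := \sigma(s)\phi(t) \neq 0$ lies under $\sigma(s)$, so Lemma~\ref{lem:fudge}(2) supplies $f \leq e \leq \phi(t)$ with $sfs \neq f$, and part (1) produces $a \in C_t$ witnessing $[s, asa^{-1}] \neq 1$, contradicting $s \in Z_t$. Part (4) is equally direct: Axiom (F3) yields a special $3$-cycle $a$ with $\sigma(a) \leq e$, and for every $s \in Z_t$ part (2) gives $\sigma(s) \leq \sigma(t)$, so $\sigma(a^2)\sigma(s) \leq \sigma(a)\sigma(t) \leq e\sigma(t) = 0$, whence $a^2$ commutes with $s$ by Lemma~\ref{lem:jerry}(6) and thus $a^2 \in S_t$.

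Part (3) is the technical heart and the main obstacle. The natural candidate is the special involution $g = tf \vee ft \vee \overline{\mathbf{e}(tf)}$ from the proof of Axiom~(F2), where $f \leq e$ is chosen via Lemma~\ref{lem:fudge}(2) and Lemma~\ref{lem:tea-time}(1) so that $f \perp tft$. A direct calculation shows $g \in C_t$, $g^2 = 1$, $\sigma(g) = f \vee tft \leq \mathbf{e}(te)$, and $\sigma(g) \leq \phi(tg)$. The subtle issue is securing the remaining defining property of $Z_t$, namely $[g, aga^{-1}] = 1$ for every $a \in C_t$; by Lemma~\ref{lem:jerry}(6) it suffices to arrange $\sigma(g) \wedge a\sigma(g)a^{-1} = 0$ for the relevant conjugates, which requires shrinking $f$ to a sufficiently small compact open idempotent using the Hausdorff structure of $\mathsf{G}(S)$ and the fact that $S$ is $0$-simplifying, so that $C_t$-translates of $\sigma(g)$ are either equal to or orthogonal to $\sigma(g)$. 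This is where Matui's argument is most delicate and translating it into the inverse-monoid setting is the main work.

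For part (5), I would argue by contradiction: assume $a^2Fa^{-2} \neq F$ and use Lemma~\ref{lem:soros}(1) twice, together with $\sigma(t) \in F$, to produce $e \in F$ with $e \leq \sigma(t)$, $e \perp tet$, and $e \perp a^2ea^{-2}$. Apply part (3) to obtain $s \in Z_t$ with $\sigma(s) \leq e \vee tet$. Since $a$ commutes with $s$ by hypothesis, $a^2\sigma(s)a^{-2} = \sigma(s)$, so $a^2$ preserves the orthogonal decomposition $\sigma(s) = (\sigma(s)\wedge e) \vee (\sigma(s) \wedge tet)$; combined with $e \perp a^2ea^{-2}$, $a^2$ must send $\sigma(s) \wedge e$ into $\sigma(s) \wedge tet$, and a careful comparison of the membership of the resulting subidempotents in $F$ versus $a^2Fa^{-2}$ yields the contradiction. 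Finally, part (6) follows from (5) and Proposition~\ref{prop:tempest}: writing $b = c^2$ with $c$ commuting with $Z_t$, part (5) shows that $b$ fixes every $F \in U_{\sigma(t)}$ satisfying $tFt \neq F$, and these form a dense subset of $U_{\sigma(t)}$ by Proposition~\ref{prop:tempest}; the fixed-point set of the self-homeomorphism $F \mapsto bFb^{-1}$ of the Hausdorff space $\mathsf{X}(S)$ is closed, so $b$ fixes all of $U_{\sigma(t)}$ and Lemma~\ref{lem:blue} gives $\sigma(t) \leq \phi(b)$.
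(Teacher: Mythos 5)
Your parts (1), (2), (4) and (6) are sound: (1) is a pleasant variant of the paper's argument (the paper instead takes any special $3$-cycle $a$ with $\sigma(a)\leq e\perp ses$, picks $k\leq\sigma(a^2)$ with $a^2ka^{-2}\neq k$ via Lemma~\ref{lem:fudge}, and computes the two conjugates of $k$, whereas you exploit the explicit cycled idempotents $e_1,e_2,e_3$; both work); (2) is the contrapositive of the paper's deduction; (6) is the paper's density argument phrased via closedness of the fixed-point set. In (4) note a small slip: $S_t$ consists of squares $a^2$ where $a$ itself commutes with every $s\in Z_t$, so you need $[a,s]=1$ rather than $[a^2,s]=1$; your inequality $\sigma(a)\sigma(s)\leq e\,\sigma(t)=0$ gives exactly that, so the fix is immediate.

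The genuine gaps are parts (3) and (5), which are the load-bearing steps. For (3) you explicitly defer the verification that $g\in Z_t$, and the reduction you propose is not the right one: you cannot shrink $f$ so that \emph{all} $C_t$-translates $a\sigma(g)a^{-1}$, $a\in C_t$, become equal or orthogonal to $\sigma(g)$ (no single choice of $f$ controls the whole group), and even equality of supports would not give $[g,aga^{-1}]=1$. The paper's argument has a different shape: the crucial consequence of $\sigma(s)\leq\phi(ts)$ is that $s$ acts like $t$ on its own support ($f\leq\sigma(s)$ implies $sfs=tft$); then, for $b=asa^{-1}$ with $a\in C_t$, one checks $(sb)i(sb)=(bs)i(bs)$ for idempotents $i$ in each of the four blocks $\sigma(s)\sigma(bsb)$, $\sigma(s)\sigma(b)$, $\phi(s)\phi(bsb)$, $\phi(s)\sigma(b)$, and concludes $sb=bs$ from fundamentality (Proposition~\ref{prop:fc}); the supports of $s$ and $asa^{-1}$ are allowed to overlap, and on the overlap both act as $t$. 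For (5) your sketch stalls precisely at ``a careful comparison\ldots yields the contradiction'': with only $e\perp a^2ea^{-2}$ and $e\perp tet$ one can indeed show that conjugation by $a^2$ interchanges $\sigma(s)\wedge e$ and $\sigma(s)\wedge tet$, but nothing places either piece in $F$, the configuration is not self-contradictory, and no algebraic contradiction with commutativity is available since $a^2$ genuinely commutes with $s$. The missing idea is the paper's dichotomy for $a$ itself: if $F$, $aFa^{-1}$, $tFt$ were pairwise distinct, Lemma~\ref{lem:soros} gives $e\in F$ with $\{e,tet,aea^{-1}\}$ orthogonal; part (3) then produces $g\in Z_t$ whose support is forced below $e$ (using $[a,t]=[a,g]=1$ and the three-way orthogonality), and evaluating the would-be commutation of $a$ and $g$ on $0\neq f\leq\sigma(g)$ yields $afa^{-1}$ versus $atfta^{-1}$, which are distinct, contradicting $g\in Z_t$. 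Hence $aFa^{-1}\in\{F,tFt\}$ and in either case $a^2Fa^{-2}=F$. Without this dichotomy (and without a complete (3), on which (5) depends), the proposal does not establish the lemma.
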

\begin{proof}  (1) Let $f \leq \phi (t)$ be such that $sfs \neq f$.
Then by Lemma~\ref{lem:tea-time},  there exists $e \leq f$ such that $e \perp ses$.
But $f \perp \sigma (t)$ by Lemma~\ref{lem:basic} and so $e \perp \sigma (t)$. 
By (F3), there exists a special $3$-cycle $a$ such that $\sigma (a) \leq e$.
Thus $\sigma (a) \sigma (t) = 0$.
But then $at = ta$ by Lemma~\ref{lem:jerry} and so $a \in C_{t}$.
Observe that from $e(ses) = 0$ and $\sigma (a) \leq e$, we have that $\sigma (a)s \sigma (a)s = 0$. 
Thus  $\sigma (a)\sigma (sas) = 0$ by Lemma~\ref{lem:jerry}.
It follows that $\sigma (sas) \leq \phi (a)$ by lemma~\ref{lem:basic}
We now prove that $s$ and $asa^{-1}$ do not commute.
Thus we prove that $sasa^{-1} \neq asa^{-1}s$.
By Lemma~\ref{lem:jerry} $\sigma (a^{2}) \leq \sigma (a)$.
Since $a^{2}$ is not the identity, by Lemma~\ref{lem:fudge} there is a non-zero idempotent $k \leq \sigma (a^{2})$ such that  $a^{2}ka^{-2} \neq k$.
Clearly, $aka^{-1} \neq k$.
The idempotents $k$ and $aka^{-1}$ and $a^{-1}ka$ are distinct because
if $aka^{-1} = a^{-1}ka$ then we would have $a^{2}ka^{-2} = k$.
We calculate first  $sasa^{-1}kasa^{-1}s$.
From $k \leq \sigma (a)$ we get that $sa^{-1}kas \leq \sigma (sas)$.
But then $sa^{-1}kas \leq \phi (a)$.
It follows that $asa^{-1}kasa^{-1} = sa^{-1}kas$.
Thus  
$$s(asa^{-1}kasa^{-1})s = s( sa^{-1}kas)s = a^{-1}ka.$$
We now calculate $asa^{-1}s k sasa^{-1}$.
From $\sigma (a^{-1}) = \sigma (a)$
we get that $k \leq \sigma (a^{-1})$ and so $sks \leq \sigma (sa^{-1}s)$.
It follows that $sks \leq \phi (a^{-1})$.
Thus $a^{-1}(sks)a = sks$.
It now readily follows that 
$$asa^{-1}s k sasa^{-1} = aka^{-1}.$$

(2) By part (1) above, if $s \in Z_{t}$ then $sfs = f$ for all $f \leq \phi (t)$.
It follows by Lemma~\ref{lem:blue}, that $\phi (t) \leq \phi (s)$ and so $\sigma (s) \leq \sigma (t)$, as required.

(3) By (F2), there exists a special involution $s$ satisfying $\sigma (s) \leq \mathbf{e}(te)$ and $\sigma (s) \leq \phi (ts)$.
It remains to prove that  $s \in Z_{t}$.
From $\sigma (s) \leq \phi (ts)$ we get that $\sigma (s) \sigma (ts) = 0$.
Thus $sts = tss = t$ by Lemma~\ref{lem:jerry}.
It follows that $st = ts$ and so $s \in C_{t}$.
Let  $a \in C_{t}$.
We shall prove that $s$ commutes with $b = asa^{-1}$, an involution that commutes with $t$.
Observe that
$$1 = \sigma (s)\sigma (bsb) \vee \sigma (s)\sigma (b) \vee \phi (s)\phi (bsb) \vee \phi (s) \sigma (b)$$
which follows from the fact that 
$$\sigma (bsb) \leq \sigma (b) \vee \sigma (s)
\text{ and }
\sigma (s) \leq \sigma (bsb) \vee \sigma (b)$$
by part (3) of Lemma~\ref{lem:jerry}.
Observe that $f \leq \sigma (s)$ implies $sfs = tft$ (using $\sigma (s) \leq \phi (ts)$)
and that $f \leq \phi (g)$ implies that $gfg^{-1} = f$ by part (3) of Lemma~\ref{lem:blue}.
We prove that for any idempotent $i$ we have that $(sb)i(sb) = (bs)i(bs)$
which by Proposition~\ref{prop:fc} shows that $sb = bs$, as required.
 It is enough to do this in each of the following four cases.
\begin{description}
\item[{\rm (i)}] Let $i \leq \sigma (s)\sigma (bsb)$.
Then $i, bib \leq \sigma (s)$.
Thus
$$bsisb = b(sis)b = b(tit)b = t(bib)t = s(bib)s = sb i bs.$$
\item[{\rm (ii)}] Let $i \leq \sigma (s)\sigma (b)$.
Then $i, a^{-1}ia \leq \sigma (s)$.
Thus
$$b(sis)b = b(tit)b = t(bib)t = t(asa^{-1}iasa^{-1})t = tas(a^{-1}ia)sa^{-1}t  =  tat(a^{-1}ia)ta^{-1}t = i$$
whereas
$$sbibs = sas(a^{-1}ia)sa^{-1}s =  sat(a^{-1}ia)ta^{-1}s = stits = tsist = t^{2}it^{2} = i.$$
\item[{\rm (iii)}] $i \leq \phi (s) \phi (bsb)$.
Then $i, bib \leq \phi (s)$.
Thus 
$$bsisb = bib 
\text{ whereas }
sbibs = bib.$$
\item[{\rm (iv)}] $i \leq \phi (s) \sigma (b)$.
Then $a^{-1}ia \leq \sigma (s)$.
Thus
$$bib = as(a^{-1}ia)sa^{-1} = at(a^{-1}ia)ta^{-1} = tit$$
and so
$$bsisb = bib = tit
\text{ whereas }
sbibs = stits = tsist = tit.$$
\end{description}

(4) By (F3), there exists a special $3$-cycle $a$ such that $\sigma (a) \leq e$.
From Lemma~\ref{lem:jerry} and $\sigma (a) \sigma (t) = 0$ we have that $[a,t] = 1$.
Let $s \in Z_{t}$ be arbitrary.
By part (2) above $\sigma (s) \leq \sigma (t)$ and so $\sigma (a) \sigma (s) = 0$.
It follows by Lemma~\ref{lem:jerry} that  the elements $a$ and $s$ commute.

(5) Let $a$ be a unit that commutes with every element of $Z_{t}$ and let $\sigma (t) \in F$ be an ultrafilter such that $tFt \neq F$.
We claim that $aFa^{-1} = F$ or $aFa^{-1} = tFt$.
We show first that the claim proves the result.
Observe that if $aFa^{-1} = F$ then $a^{2}Fa^{-2} = F$ and if $aFa^{-1} = tFt$ then
$$a^{2}Fa^{-2} = a (aFa^{-1})a^{-1} = a(tFt)a^{-1} = t(aFa^{-1})t = t^{2}Ft^{2} = F$$
where we use the fact that $a$ commutes with $t$.

We now prove the claim.
Suppose to the contrary that the ultrafilters 
$$F,aFa^{-1}, tFt$$ 
are distinct.
Then by Lemma~\ref{lem:soros}, there exists $e \leq \sigma (t)$, a non-zero idempotent, such that $e,tet,aea^{-1}$ are orthogonal. 
By part (3), there exists $g \in Z_{t}$ such that $\sigma (g) \leq \mathbf{e}(te) = e \vee tet$ and $\sigma (g) \leq \phi (tg)$.
Since $\sigma (g) \leq e \vee tet$ we can write $\sigma (g) = f_{1} \vee f_{2}$ where $f_{1} \perp f_{2}$ and $f_{1} \leq e$ and $f_{2} \leq tet$.
We prove that $f_{2} = 0$.
From $f_{2} \leq tet$ we get that $af_{2}a^{-1} \leq ateta^{-1} = t(aea^{-1})t$ since $a$ and $t$ commute.
From $f_{2} \leq \sigma (g)$ we get that $af_{2}a^{-1} \leq \sigma (aga^{-1}) = \sigma (g)$ since $a$ and $g$ commute.
It follows that $af_{2}a^{-1} \leq t(aea^{-1})t (e \vee tet)$.
But $taea^{-1}t \perp e \vee tet$.
Thus $af_{2}a^{-1} = 0$ and so $f_{2} = 0$.
It follows that $\sigma (g) \leq e$.
Let $0 \neq f \leq \sigma (g)$.
Then $afa^{-1} \leq aea^{-1} \perp e \vee tet$.
It follows that $afa^{-1} \leq \phi (g)$.
Thus $gafa^{-1}g = afa^{-1}$.
On the other hand $agfga^{-1} = atfta^{-1}$.
But $afa^{-1} \neq atfta^{-1}$ because if they were equal we would have that $f = tft$ which is a contradiction.
Thus $a$ and $g$ do not commute which is a contradiction.
It follows that the ultrafilters 
$$F,aFa^{-1}, tFt$$ 
are not distinct and the claim follows.

(6) Let $b \in S_{t}$ and let $\sigma (t) \in F$.
We prove that $bFb^{-1} = F$ and then apply part~(1) of Lemma~\ref{lem:blue}.
By definition $b = a^{2}$, where $a$ commutes with every element of $Z_{t}$.
If $tFt \neq F$ then $bFb^{-1} = F$ by part (5). 
Suppose, therefore, that $tFt = F$ but that $bFb^{-1} \neq F$.
By Lemma~\ref{lem:soros}, there exists $e \leq \sigma (t)$ and $e \in F$ such that $e \perp beb^{-1}$.
By Proposition~\ref{prop:tempest}, 
there is $G \in U_{e}$ such that $tGt \neq G$.
By part (5) above $bGb = G$.
But then $e$ cannot be orthogonal to $beb^{-1}$.
\end{proof}

The key result of this section is the following.
This was first proved by Matui \cite{Matui13} in the context of \'etale groupoids.

\begin{theorem}\label{prop:matui_five}
Let $S$ be a simple Tarski monoid.
If $t$ is an involution in $G$ then
$$W_{t} = U(\sigma (t)).$$
\end{theorem}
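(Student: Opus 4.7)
My plan is to prove $W_t = U(\sigma(t))$ by splitting into the two inclusions, the easier one first to orient the calculation.

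For $U(\sigma(t)) \subseteq W_t$, I would take $a \in U(\sigma(t))$, so $\sigma(a) \leq \sigma(t)$, and any $b \in S_t$. Part~(6) of Lemma~\ref{lem:Brexit} gives $\sigma(t) \leq \phi(b)$, whence $\sigma(b) \leq \overline{\sigma(t)}$; multiplying yields $\sigma(a)\sigma(b) = 0$, and part~(6) of Lemma~\ref{lem:jerry} delivers $[a,b]=1$. Since $b$ was arbitrary, $a \in W_t$.

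For the harder inclusion $W_t \subseteq U(\sigma(t))$, I would argue by contraposition. Let $a \in G$ with $\sigma(a) \not\leq \sigma(t)$ --- the case $a = 1$ being trivial, I may assume $a$ non-trivial. By Lemma~\ref{lem:basic}, $e := \sigma(a)\phi(t)$ is a non-zero idempotent orthogonal to $\sigma(t)$. Since $e \leq \sigma(a)$, Lemma~\ref{lem:fudge}(2) supplies $f \leq e$ with $afa^{-1} \neq f$, and Lemma~\ref{lem:tea-time}(1) refines this to a non-zero $f' \leq f$ with $f' \perp af'a^{-1}$. Because $f' \leq e \leq \phi(t)$ we have $f'\sigma(t) = 0$, so Lemma~\ref{lem:Brexit}(4) produces a special $3$-cycle $c$ with $\sigma(c) \leq f'$ and $c^{2} \in S_t$. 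I would then witness $a \notin W_t$ by showing $[a,c^{2}] \neq 1$.

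The contradiction is a short support calculation. If $[a,c^{2}] = 1$ then $ac^{2}a^{-1} = c^{2}$, so by Lemma~\ref{lem:jerry}(4), $a\sigma(c^{2})a^{-1} = \sigma(c^{2})$. Since $c$ has order three, $c^{2} = c^{-1}$, and Lemma~\ref{lem:jerry}(2) gives $\sigma(c^{2}) = \sigma(c)$. Now $\sigma(c) \leq f'$ forces $a\sigma(c)a^{-1} \leq af'a^{-1}$, which is orthogonal to $f'$ and hence to $\sigma(c)$; combined with $a\sigma(c)a^{-1} = \sigma(c)$, this gives $\sigma(c) = 0$, contradicting $c \neq 1$ via Lemma~\ref{lem:jerry}(1).

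The main obstacle has essentially been absorbed into Lemma~\ref{lem:Brexit}: the nontrivial content --- producing $3$-cycles whose squares actually lie in $S_t$, and pinning down the support behaviour of $Z_t$ and $S_t$ --- is already discharged there. What remains is the careful two-step refinement of the idempotent ($\sigma(a)\phi(t) \rightsquigarrow f \rightsquigarrow f'$) that both sits inside $\phi(t)$ and is moved orthogonally by $a$, so that Brexit~(4) can be invoked exactly where $a$ is active but $t$ is quiescent; then a support calculation closes the proof.
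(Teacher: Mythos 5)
Your proof is correct and follows essentially the same route as the paper: Lemma~\ref{lem:Brexit}(6) with Lemma~\ref{lem:jerry}(6) for $U(\sigma(t)) \subseteq W_{t}$, and for the reverse inclusion the same chain of locating an idempotent below $\phi(t)$ that $a$ moves orthogonally, invoking Lemma~\ref{lem:Brexit}(4) to get a special $3$-cycle there, and closing with a support computation. The only differences are cosmetic: you reach the moved idempotent via Lemma~\ref{lem:fudge}(2) applied to $\sigma(a)\phi(t)$ rather than via Lemma~\ref{lem:blue}, and your observation $\sigma(c^{2}) = \sigma(c^{-1}) = \sigma(c)$ makes explicit the final contradiction that the paper leaves implicit.
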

\begin{proof} We prove first that $U(\sigma (t)) \subseteq W_{t}$.
Let $g \in U(\sigma (t))$.
Then  $\sigma (g) \leq \sigma (t)$.
Let $b \in S_{t}$.
Then 
$\sigma (t) \leq \phi (b)$
by part (6) of Lemma~\ref{lem:Brexit}.
Thus $\sigma (g) \leq \phi (b)$.
It follows that $\sigma (g) \sigma (b) = 0$ by Lemma~\ref{lem:basic}.
Thus $g$ and $b$ commute by Lemma~\ref{lem:jerry}.
This implies that $g \in W_{t}$.

We now prove that $W_{t} \subseteq U(\sigma (t))$.
Let $a \in W_{t}$.
To prove that $\sigma (a) \leq \sigma (t)$ we shall prove that $a$ fixes $\phi(t)^{\downarrow}$ pointwise under conjugation and then apply Lemma~\ref{lem:blue}.
Suppose to the contrary that there is $f \leq \phi (t)$ such that $afa^{-1} \neq f$.
By Lemma~\ref{lem:tea-time}, 
there is a non-zero idempotent $e$ such that $e \leq f$ and $e(aea^{-1}) = 0$.
Observe that $e \sigma (t) = 0$ by Lemma~\ref{lem:basic}.
By part (4) of Lemma~\ref{lem:Brexit}, there exists a special $3$-cycle $b$ such that $\sigma (b) \leq e$ and $b^{2} \in S_{t}$.
Now $\sigma (b^{2}) \leq \sigma (b)$ by part (6) of Lemma~\ref{lem:jerry}.
Observe that  
$$\sigma (b^{2}) \sigma (ab^{2} a^{-1}) = \sigma (b^{2}) a \sigma (b^{2}) a^{-1} \leq \sigma (b)a\sigma (b)a^{-1}  \leq e(aea^{-1})  = 0.$$
It follows that $b^{2} \neq ab^{2}a^{-1}$ and so $ab^{2} \neq b^{2}a$ which contradicts the fact that $a \in W_{t}$ and $b^{2} \in S_{t}$.
\end{proof}

\subsection{Constructing the isomorphism: proof of Theorem~~\ref{them:matui}}

\begin{lemma}\label{lem:splott} Let $S$ and $T$ be simple Tarski monoids.
Let $G \leq \mathsf{U}(S)$ be a subgroup containing $\mathsf{Sym}(S)$ and let $H \leq \mathsf{U}(T)$  be a subgroup containing $\mathsf{Sym}(T)$
and let $\alpha \colon G \rightarrow H$ be an isomorphism. 
Let $s,t \in G$ be involutions.
Then
\begin{enumerate}

\item $\sigma (t) \leq \sigma (s)$ if and only if $\sigma (\alpha (t)) \leq \sigma (\alpha (s))$.

\item $\sigma (t) \perp \sigma (s)$ if and only if $\sigma (\alpha (t)) \perp \sigma (\alpha (s))$.

\end{enumerate}
\end{lemma}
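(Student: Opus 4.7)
The plan is to exploit the purely group-theoretic characterization $W_{t} = U(\sigma(t))$ established in Theorem~\ref{prop:matui_five}, which translates an assertion about supports of involutions into an assertion about certain subgroups defined entirely from $t$ by centralizers and commutators. In particular, since the definitions of $C_{t}$, $Z_{t}$, $S_{t}$ and $W_{t}$ use only the group operations, any isomorphism $\alpha \colon G \to H$ satisfies $\alpha(C_{t}) = C_{\alpha(t)}$, $\alpha(Z_{t}) = Z_{\alpha(t)}$, $\alpha(S_{t}) = S_{\alpha(t)}$, and hence $\alpha(W_{t}) = W_{\alpha(t)}$. The same holds with $s$ in place of $t$.

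For part (1), I would chain the following equivalences. By Lemma~\ref{lem:tom}, $\sigma(t) \leq \sigma(s)$ iff $U(\sigma(t)) \subseteq U(\sigma(s))$. By Theorem~\ref{prop:matui_five}, this becomes $W_{t} \subseteq W_{s}$. Since $\alpha$ is a bijection of groups, this is equivalent to $\alpha(W_{t}) \subseteq \alpha(W_{s})$, i.e.\ $W_{\alpha(t)} \subseteq W_{\alpha(s)}$. Running the equivalences backwards in $T$ (which is also a simple Tarski monoid and so satisfies the same theorems, with $H$ containing $\mathsf{Sym}(T)$) gives $\sigma(\alpha(t)) \leq \sigma(\alpha(s))$.

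For part (2), I would first observe the obvious identity $U(e \wedge f) = U(e) \cap U(f)$, which is immediate from the definition of $U(e)$ as $\{g \in G \colon \sigma(g) \leq e\}$. Hence $U(\sigma(t) \wedge \sigma(s)) = W_{t} \cap W_{s}$. Next, using Lemma~\ref{lem:tom} together with part~(1) of Lemma~\ref{lem:jerry} (so that $U(0) = \{1\}$), we have $e = 0$ iff $U(e) = \{1\}$. Moreover, if $e \neq 0$ then axiom (F3) (valid by Proposition~\ref{prop:hope}) produces a non-trivial special $3$-cycle $g \in \mathsf{Alt}(S) \subseteq \mathsf{Sym}(S) \subseteq G$ with $\sigma(g) \leq e$, confirming that $U(e) \neq \{1\}$ whenever $e \neq 0$. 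Therefore $\sigma(t) \perp \sigma(s)$ iff $\sigma(t) \wedge \sigma(s) = 0$ iff $W_{t} \cap W_{s} = \{1\}$. Applying $\alpha$, which sends this intersection to $W_{\alpha(t)} \cap W_{\alpha(s)}$ and fixes the trivial subgroup, converts the condition into $\sigma(\alpha(t)) \perp \sigma(\alpha(s))$.

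Since the heavy lifting has already been done in Theorem~\ref{prop:matui_five} and Lemma~\ref{lem:tom}, there is no real obstacle here: the argument is a transfer principle, and the only point requiring care is verifying that $W_{t}$ is assembled by group-theoretic operations alone so that $\alpha(W_{t}) = W_{\alpha(t)}$, together with the small observation that $U(e) = \{1\}$ characterizes $e = 0$.
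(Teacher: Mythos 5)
Your proof is correct, and for part (1) it is essentially identical to the paper's: chain $\sigma(t)\leq\sigma(s)$ through $U(\sigma(t))\subseteq U(\sigma(s))$ (Lemma~\ref{lem:tom}), then $W_{t}\subseteq W_{s}$ (Theorem~\ref{prop:matui_five}), transfer by $\alpha(W_{t})=W_{\alpha(t)}$, and run the chain backwards in $T$; the paper compresses the group-theoretic invariance of $W_{t}$ into the phrase ``by algebra,'' whereas you rightly flag it as the one point needing verification. For part (2) your route differs in a small but genuine way. The paper argues the contrapositive: if $\sigma(t)\sigma(s)\neq 0$, axiom (F1) supplies a special involution $r$ with $\sigma(r)\leq\sigma(t)\sigma(s)$, and two applications of part (1) push $\sigma(\alpha(r))$ below $\sigma(\alpha(t))\sigma(\alpha(s))$, which is therefore non-zero (this tacitly uses $\sigma(\alpha(r))\neq 0$, i.e.\ $\alpha(r)\neq 1$ and part (1) of Lemma~\ref{lem:jerry}). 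You instead encode orthogonality directly as triviality of a subgroup: $U(e)\cap U(f)=U(ef)$, so $W_{t}\cap W_{s}=U(\sigma(t)\sigma(s))$, and $U(e)=\{1\}$ if and only if $e=0$ (one direction by Lemma~\ref{lem:jerry}(1), the other by (F3) producing a non-trivial special $3$-cycle in $\mathsf{Alt}(S)\leq G$); since $\alpha$ carries $W_{t}\cap W_{s}$ bijectively onto $W_{\alpha(t)}\cap W_{\alpha(s)}$ and the same characterization holds in $T$, the equivalence follows without invoking part (1). Both arguments rest on the same heavy machinery (Theorem~\ref{prop:matui_five}, Lemma~\ref{lem:tom}, and the axioms of Proposition~\ref{prop:hope}); yours makes the non-degeneracy witness explicit and gives an iff in one pass, the paper's is marginally shorter by reusing part (1). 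Either is acceptable.
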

\begin{proof} (1) Let $\sigma (t) \leq \sigma (s)$.
Then  $U(\sigma (t)) \leq U(\sigma (s))$ by Lemma~\ref{lem:tom}.
Thus $W_{t} \leq W_{s}$ by Proposition~\ref{prop:matui_five}.
By algebra,  $\alpha (W_{t}) \leq \theta (W_{s})$ and so $W_{\alpha (t)} \leq W_{\alpha (s)}$.
Thus $U(\sigma (\alpha (t))) \leq U(\sigma (\alpha (s)))$ by Proposition~\ref{prop:matui_five}.
Hence  $\sigma (\alpha (t)) \leq \sigma (\alpha (s))$ by Lemma~\ref{lem:tom}.
The reverse implication follows since $\alpha^{-1}$ is an isomorphism, and the result now follows.

(2) Suppose that $\sigma (t) \sigma (s) \neq 0$.
By (F1), there exists a special involution $r$ such that $\sigma (r) \leq \sigma (t)\sigma (s)$.
By Lemma~\ref{lem:tom}, we have that
$U(\sigma (r)) \leq U(\sigma (t)) \cap U(\sigma (s))$.
By part (1), we have that $U(\sigma (\alpha (r))) \leq U(\sigma (\alpha (t))) \cap U(\sigma (\alpha (s)))$,
and so by Lemma~\ref{lem:tom}, we have that $\sigma (\alpha (r)) \leq \sigma (\alpha (t)) \sigma (\alpha (s))$.
In particular, $\sigma (\alpha (t)) \sigma (\alpha (s)) \neq 0$.
\end{proof}

Let $S$ be a simple Tarski monoid and $G$ a subgroup of the group of units of $S$ containing $\mathsf{Sym}(S)$.
Let $F \subseteq \mathsf{E}(S)$ be a proper filter. 
Denote by $T(F)$ the set of all involutions $t \in G$ such that $\sigma (t) \in F$.
Define
$$F^{\sigma} = \{ \sigma (t) \colon t \in T(F) \},$$
the {\em support skeleton} of $F$.

\begin{lemma}\label{lem:susan} Let $S$ be a simple Tarski monoid. 
Let $F \subseteq \mathsf{E}(S)$ be a proper filter.
\begin{enumerate}
\item $F^{\sigma}$ is a filter base for $F$ and $F = (F^{\sigma})^{\uparrow}$.
\item $F$ is an ultrafilter if and only if $\sigma (t) \notin F^{\sigma}$ implies that there exists $\sigma (s) \in F^{\sigma}$
such that $\sigma (t) \perp \sigma (s)$.
\item Let $A$ be a non-empty set of idempotents satisfying the following conditions:  each element of $A$ is of the form $\sigma (t)$ for some involution $t$,
$A$ is down-directed and $\sigma (t) \notin A$ implies that there exists $\sigma (s) \in A$ such that $\sigma (t) \perp \sigma (s)$.
Then $A^{\uparrow}$ is an ultrafilter.
\end{enumerate}
\end{lemma}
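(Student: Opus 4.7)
The plan is to establish (1), (2), (3) in sequence, with axiom (F1) providing the key input for each.

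For (1), the inclusion $(F^{\sigma})^{\uparrow} \subseteq F$ is immediate from $F^{\sigma} \subseteq F$ together with upward closure of $F$. The substantive content is the reverse: for each non-zero $e \in F$ there must be an involution $t$ with $\sigma(t) \in F$ and $\sigma(t) \leq e$. The natural opening move is to apply (F1) and write $e = \bigvee_{i=1}^{m} \sigma(t_i)$ with every $\sigma(t_i) \leq e$, and then to show that at least one $\sigma(t_i)$ lies in $F$. Once this cofinality is in hand, down-directedness of $F^{\sigma}$ follows routinely, since if $\sigma(s),\sigma(t) \in F^{\sigma}$ then $\sigma(s)\sigma(t) \in F$ is non-zero by properness, and a re-application of cofinality yields $\sigma(u) \in F^{\sigma}$ below it.

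For (2), the forward direction exploits primeness of $F$ in the Boolean algebra $\mathsf{E}(S)$. If $\sigma(t) \notin F^{\sigma}$ then $\sigma(t) \notin F$, so $\overline{\sigma(t)} \in F$; by (1) one finds $\sigma(s) \in F^{\sigma}$ with $\sigma(s) \leq \overline{\sigma(t)}$, and Lemma~\ref{lem:basic} turns this into $\sigma(s) \perp \sigma(t)$. For the converse I would verify primeness of $F$ directly. Let $e \notin F$; the case $e = 0$ is trivial, so assume $e \neq 0$ and apply (F1) to obtain $e = \bigvee_i \sigma(t_i)$ with $\sigma(t_i) \leq e$. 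Upward closure of $F$ and $e \notin F$ imply every $\sigma(t_i) \notin F^{\sigma}$, so the hypothesis supplies $\sigma(s_i) \in F^{\sigma}$ with $\sigma(s_i) \perp \sigma(t_i)$. Setting $f = \bigwedge_i \sigma(s_i) \in F$, one gets $f\sigma(t_i) = 0$ for each $i$, hence $fe = \bigvee_i f\sigma(t_i) = 0$, so $f \leq \bar{e}$ by Lemma~\ref{lem:basic}, whence $\bar{e} \in F$.

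For (3), first verify that $A^{\uparrow}$ is a proper filter: non-emptiness is immediate from $A \neq \emptyset$; upward closure is by definition; closure under binary meets uses down-directedness of $A$; and properness is secured by Lemma~\ref{lem:jerry}(1), since $0 \in A^{\uparrow}$ would entail $\sigma(t) = 0$ and therefore $t = 1$ for some involution $t$, a contradiction. For maximality, extend $A^{\uparrow}$ to an ultrafilter $G$ via Zorn's lemma and suppose the containment is strict. Choose $e \in G \setminus A^{\uparrow}$; applying part (1) to the ultrafilter $G$ produces an involution $t$ with $\sigma(t) \in G$ and $\sigma(t) \leq e$. If $\sigma(t) \in A$ then $e \geq \sigma(t) \in A^{\uparrow}$ forces $e \in A^{\uparrow}$, a contradiction; so $\sigma(t) \notin A$, and the hypothesis yields $\sigma(s) \in A \subseteq G$ with $\sigma(s) \perp \sigma(t)$. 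But then $\sigma(s)\sigma(t) = 0$ lies in $G$, contradicting properness of $G$.

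The subtlest point is the cofinality claim in (1) for a general proper filter: the (F1) decomposition $e = \bigvee \sigma(t_i)$ must always contain a summand lying in $F$. This is automatic when $F$ is prime (as in the ultrafilter case invoked in (3)), but for a general proper filter it requires a careful balancing of the (F1) joins against the meet closure of $F$. Once (1) is in place, parts (2) and (3) are largely formal applications of the standard extend-and-contradict template for recognising ultrafilters.
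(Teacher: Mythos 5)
Your parts (2) and (3) are essentially correct, and in places more robust than the paper's own argument: in the converse half of (2) you take the meet $f=\bigwedge_{i}\sigma(s_{i})\in F$ directly and conclude $\bar{e}\in F$, which bypasses the down-directedness of $F^{\sigma}$ (the paper instead routes this step through part (1)), and in (3) your Zorn extension replaces the paper's direct verification that $\bar{e}\in A^{\uparrow}$ for every $e\notin A^{\uparrow}$; both routes work. The genuine gap is exactly the point you flag in part (1): the claim that, for a general proper filter $F$ and $e\in F$, some summand of an (F1) decomposition $e=\bigvee_{i}\sigma(t_{i})$ lies in $F$. You defer this to ``a careful balancing'', but no balancing will do it: without primeness the step fails, and part (1) in this generality cannot be rescued. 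Apply it to a principal filter $F=f^{\uparrow}$: it would force $\sigma(t)=f$ exactly for some involution $t\in G$, i.e.\ every non-zero idempotent would have to be precisely the support of an involution. That is false in general: in the simple Tarski monoid of compact-open bisections of the $3$-adic odometer groupoid (whose group of units is the topological full group), freeness of the action means any involution $\tau$ has clopen support splitting as $V\sqcup\tau(V)$ with $V$ clopen, so the support has value in $2\mathbb{Z}[1/3]$ under the unique invariant probability measure; since $1/3\notin 2\mathbb{Z}[1/3]$, an idempotent of measure $1/3$ is the support of no involution at all, and (1) fails for its principal filter.

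What is true --- and all that is needed, both in your own parts (2) and (3) and in every later application of the lemma in the paper --- is the ultrafilter case of (1), where primeness makes your cofinality argument go through verbatim. For comparison, the paper's own proof of (1) makes the same unguarded move, quoting (F1) as though it produced a special involution with $\sigma(t)\leq e$ \emph{and} $\sigma(t)\in F$; what is actually available is the stronger statement established inside the proof of Proposition~\ref{prop:hope}, namely that for every \emph{ultrafilter} $F$ containing $e$ there is a special involution $t$ with $\sigma(t)\leq e$ and $\sigma(t)\in F$, and that statement is genuinely ultrafilter-specific. So the honest repair, for your write-up as for the paper, is to establish (1) only for ultrafilters (or restate the cofinality claim in that form) and note that your version of (2) never needs it for non-ultra filters, while (3) needs it only for the ambient ultrafilter produced by Zorn. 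As it stands, your proposal leaves part (1) unproved for precisely the filters where it is false.
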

\begin{proof} (1) Let $e_{1}, e_{2} \in F^{\sigma}$.
Then the product $e_{1} e_{2}$ is non-zero since the idempotents belong to the proper filter $F$.
Thus by condition (F1), there exists a special involution $t$ such that $\sigma (t)  \in F$ and $\sigma (t) \leq e_{1} e_{2}$.
It follows that $F^{\sigma}$ is a  filter base.
Clearly, $F^{\sigma} \subseteq F$.
To prove the reverse inclusion, let $e \in F$.
Then by condition (F1) there exists a special involution $t$ 
such that $\sigma (t) \in F$ and $\sigma (t) \leq e$.
By definition $\sigma (t) \in F^{\sigma}$.
Thus  $F = (F^{\sigma})^{\uparrow}$.

(2) Suppose that $F$ is an ultrafilter and  $\sigma (t) \notin F^{\sigma}$.
Then there exists $e \in F$ such that $\sigma (t) \perp e$.
By (F1), we can write $e = \bigvee_{i=1}^{m} \sigma (s_{i})$ where the $s_{i}$ are special involutions.
Since $F$ is an ultrafilter, we have that $\sigma (s_{i}) \in F$ for some $i$.
But $\sigma (t) \perp \sigma (s_{i})$ and $\sigma (s_{i}) \in F^{\sigma}$.
We now prove the converse.
We prove that $F$ is an ultrafilter.
Suppose that $e \notin F$.
By (F1), we can write $e = \bigvee_{i=1}^{m} \sigma (s_{i})$.
Clearly, $\sigma (s_{1}), \ldots, \sigma (s_{m}) \notin F$ and so none belongs to $F^{\sigma}$.
By assumption, for each $i$ we can find $\sigma (t_{i}) \in F^{\sigma}$ such that $\sigma (s_{i}) \perp \sigma (t_{i})$.
By part (1), there is $\sigma (t) \in F^{\sigma}$ such that $\sigma (t) \leq \sigma (t_{1}) \ldots \sigma (t_{m})$.
Thus $\sigma (t) \perp \sigma (s_{i})$ for all $i$ and so $\sigma (t) \perp e$.
Clearly, $\sigma (t) \in F$.

(3) Let $e, f \in A^{\uparrow}$.
Then $e' \leq e$ and $f' \leq f$ for some $e',f' \in A$.
But $A$ is down-directed and so there is $i \in A$ such that $i \leq e',f'$.
It follows that $i \leq e,f$ and so $A^{\uparrow}$ is down-directed and, of course,  closed upwards.
It follows that $A^{\uparrow}$ is a proper filter.
We prove that it is an ultrafilter.
Suppose that $e \notin A^{\uparrow}$.
By (F1), we may write $e = \bigvee_{i=1}^{m} \sigma (s_{i})$ where the $s_{i}$ are special involutions.
It follows that $\sigma (s_{i})  \notin A^{\uparrow}$ for all $i$.
By assumption, for each $i$ there exists $\sigma (t_{i}) \in A$ such that $\sigma (s_{i}) \perp \sigma (t_{i})$.
Since $A$ is down-directed, there exists $\sigma (t) \in A$ such that $\sigma (t) \leq \sigma (t_{1}) \ldots \sigma (t_{m})$.
It follows that $\sigma (t) \perp e$. 
\end{proof}

\begin{proposition}\label{them:matui_main} Let $S$ and $T$ be simple Tarski monoids. 
Let $G$ be a subgroup of the group of units of $S$ containing $\mathsf{Sym}(S)$,
let $H$ be a subgroup of the group of units of $T$ containing $\mathsf{Sym}(T)$,
and let $\alpha \colon G  \rightarrow H$ be an isomorphism of groups.
Then there exists a homeomorphism $\beta \colon \mathsf{X}(S) \rightarrow \mathsf{X}(T)$ of structure spaces such that 
 $\beta (gFg^{-1}) = \alpha (g) \beta (F) \alpha (g)^{-1}$ for every $F \in \mathsf{X}(S)$ and $g \in G$. 
\end{proposition}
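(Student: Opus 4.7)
The plan is to build $\beta$ directly from the support skeletons that, by Lemma~\ref{lem:susan}, encode the ultrafilters of $\mathsf{E}(S)$ and $\mathsf{E}(T)$, and to transport these skeletons across $\alpha$. Since $\alpha$ is a group isomorphism, it restricts to a bijection on involutions, so for each ultrafilter $F \in \mathsf{X}(S)$ the set
$$A_{F} = \{\sigma(\alpha(t)) : t \in T(F)\}$$
consists of supports of involutions in $H$. I would define $\beta(F) = A_{F}^{\uparrow}$ and verify it is an ultrafilter via Lemma~\ref{lem:susan}(3): down-directedness of $A_{F}$ follows from down-directedness of $F^{\sigma}$ (which uses axiom (F1) applied inside $F$) together with Lemma~\ref{lem:splott}(1); and if $s' = \alpha(u)$ is an involution in $H$ with $\sigma(s') \notin A_{F}$, then $\sigma(u) \notin F^{\sigma}$, so Lemma~\ref{lem:susan}(2) produces an involution $v \in T(F)$ with $\sigma(v) \perp \sigma(u)$, and Lemma~\ref{lem:splott}(2) transports this to $\sigma(\alpha(v)) \perp \sigma(\alpha(u))$ with $\sigma(\alpha(v)) \in A_{F}$.

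Next I would show $\beta$ is a bijection whose inverse is the analogously defined map for $\alpha^{-1}$. The key calculation is to identify the support skeleton $(\beta(F))^{\sigma}$ with $A_{F}$: if $s'$ is an involution of $H$ with $\sigma(s') \in \beta(F)$, write $s' = \alpha(u)$ for an involution $u \in G$; from $\sigma(\alpha(u)) \geq \sigma(\alpha(t))$ for some $t \in T(F)$, Lemma~\ref{lem:splott}(1) applied to $\alpha^{-1}$ gives $\sigma(u) \geq \sigma(t)$, hence $\sigma(u) \in F$ and $u \in T(F)$. Running the construction backwards with $\alpha^{-1}$ then yields $\beta^{-1} \circ \beta = \mathrm{id}_{\mathsf{X}(S)}$ by Lemma~\ref{lem:susan}(1).

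For continuity, observe that
$$\beta^{-1}(U_{e'}) = \bigcup \{ U_{\sigma(t)} : t \in G,\ t^{2} = 1,\ \sigma(\alpha(t)) \leq e'\},$$
which is open in $\mathsf{X}(S)$; since both structure spaces are compact Hausdorff, $\beta$ is automatically a homeomorphism. Equivariance is then a short calculation using Lemma~\ref{lem:jerry}(4): the identity $\sigma(gtg^{-1}) = g\sigma(t)g^{-1}$ yields $T(gFg^{-1}) = gT(F)g^{-1}$, and so
$$\beta(gFg^{-1}) = \{\alpha(g)\sigma(\alpha(u))\alpha(g)^{-1} : u \in T(F)\}^{\uparrow} = \alpha(g)\beta(F)\alpha(g)^{-1}.$$

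The main obstacle is the bijectivity verification: pinning down $(\beta(F))^{\sigma} = A_{F}$ requires that every involution of $H$ with support inside $\beta(F)$ is recognized as $\alpha(u)$ for some $u \in T(F)$, and both directions of Lemma~\ref{lem:splott}(1) are essential for this back-and-forth. Once that is in hand, ultrafilter reconstruction via Lemma~\ref{lem:susan} and equivariance via Lemma~\ref{lem:jerry}(4) fall out directly.
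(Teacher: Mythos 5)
Your proposal is correct and follows essentially the same route as the paper: define $\beta(F)=\{\sigma(\alpha(t)):t\in T(F)\}^{\uparrow}$, verify it is an ultrafilter via Lemma~\ref{lem:susan} together with Lemma~\ref{lem:splott}, obtain bijectivity from the symmetric construction for $\alpha^{-1}$, deduce that $\beta$ is a homeomorphism from its action on the basic open sets, and get equivariance from $\sigma(gtg^{-1})=g\sigma(t)g^{-1}$. Your write-up merely fills in details the paper leaves implicit (the skeleton identification $(\beta(F))^{\sigma}=A_{F}$, and continuity plus compact Hausdorff in place of $\beta(U_{\sigma(t)})=U_{\sigma(\alpha(t))}$), which are harmless variants.
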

\begin{proof}
We begin by constructing the function $\beta$.
Let $F \in \mathsf{X}(S)$.
Define
$$\beta (F) = \{\sigma (\alpha (t)) \colon t \in T(F) \}^{\uparrow}.$$
It follows by Lemma~\ref{lem:susan} and Lemma~\ref{lem:splott} that $\beta (F)$ is an ultrafilter.
It also follows that $\beta$ is a bijection.
By construction,  for each involution $t \in G$, we have that $\beta (U_{\sigma (t)}) = U_{\sigma (\alpha (t))}$.
By (F1), it now follows that $\beta$ is a homeomorphism.
We finish off by checking that the stated property holds.
Let $g \in G$ and $F \in \mathsf{X}(S)$.
Let $\sigma (t) \in F$ be an element of $F^{\sigma}$.
Then $\sigma (gtg^{-1}) \in gFg^{-1}$ and we observe that $gtg^{-1}$ is also an involution.
Thus $\sigma (\alpha (gtg^{-1})) \in \beta (gFg^{-1})$.
Therefore $\alpha (g) \sigma (\alpha (t)) \alpha (g)^{-1} \in \beta (gFg^{-1})$.
It is now straightforward to see that $\alpha (g) \beta (F) \alpha (g)^{-1} \subseteq \beta (gFg^{-1})$.
But equality now follows since both are ultrafilters.
\end{proof}

\begin{corollary}\label{cor:cold}  Let $S$ and $T$ be simple Tarski monoids. 
Let $G$ be a subgroup of the group of units of $S$ containing $\mathsf{Sym}(S)$,
let $H$ be a subgroup of the group of units of $T$ containing $\mathsf{Sym}(T)$,
and let $\alpha \colon G  \rightarrow H$ be an isomorphism of groups.
Then there is an isomorphism of Boolean algebras $\gamma \colon \mathsf{E}(S) \rightarrow \mathsf{E}(T)$
such that the following hold:
\begin{enumerate}

\item $\gamma (\sigma (t)) = \sigma (\alpha (t))$ for each involution $t$ in $G$ 

\item $\gamma (geg^{-1}) = \alpha (g) \gamma (e) \alpha (g)^{-1}$
for all $g \in G$ and $e \in \mathsf{E}(S)$.

\end{enumerate}
\end{corollary}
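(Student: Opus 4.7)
The plan is to invoke Stone duality for the Boolean algebras of idempotents: the structure space $\mathsf{X}(S)$ is precisely the Stone space of $\mathsf{E}(S)$, so the homeomorphism $\beta \colon \mathsf{X}(S) \to \mathsf{X}(T)$ produced by Proposition~\ref{them:matui_main} dualizes to a Boolean algebra isomorphism $\gamma \colon \mathsf{E}(S) \to \mathsf{E}(T)$. Concretely, the compact-open subsets of $\mathsf{X}(S)$ are exactly the sets $U_e$ with $e \in \mathsf{E}(S)$ and satisfy $U_e \subseteq U_f$ iff $e \leq f$ and $U_e \cap U_f = U_{ef}$, and similarly for $T$. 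Because $\beta$ is a homeomorphism of Stone spaces it carries compact-open sets bijectively to compact-open sets, so for each $e \in \mathsf{E}(S)$ there is a unique $\gamma(e) \in \mathsf{E}(T)$ with $\beta(U_e) = U_{\gamma(e)}$, and the map $\gamma$ is an isomorphism of Boolean algebras.

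For property (1), I will read it off the construction of $\beta$ directly. The proof of Proposition~\ref{them:matui_main} establishes $\beta(U_{\sigma(t)}) = U_{\sigma(\alpha(t))}$ for each involution $t \in G$; by the defining equation $\beta(U_e) = U_{\gamma(e)}$ and the injectivity of $e \mapsto U_e$, this is exactly $\gamma(\sigma(t)) = \sigma(\alpha(t))$.

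For property (2), the key observation is that conjugation in the inverse monoid matches conjugation in the structure space under the $U_{(-)}$ dictionary. Specifically, for any $g \in G$ and $e \in \mathsf{E}(S)$ one checks $U_{geg^{-1}} = g \cdot U_e \cdot g^{-1}$, where on the right we apply the natural action pointwise: an ultrafilter $F$ contains $geg^{-1}$ iff $g^{-1}Fg$ contains $e$. Applying $\beta$ and using the intertwining property from Proposition~\ref{them:matui_main}, we get
\[
U_{\gamma(geg^{-1})} = \beta(g \cdot U_e \cdot g^{-1}) = \alpha(g) \cdot \beta(U_e) \cdot \alpha(g)^{-1} = \alpha(g) \cdot U_{\gamma(e)} \cdot \alpha(g)^{-1} = U_{\alpha(g)\gamma(e)\alpha(g)^{-1}},
\]
and injectivity of $e \mapsto U_e$ gives the required identity.

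I do not expect serious obstacles here: once Proposition~\ref{them:matui_main} is in hand, everything reduces to the bookkeeping of Stone duality. The only point that needs a moment of care is the identity $U_{geg^{-1}} = g U_e g^{-1}$, which follows because $F$ is an ultrafilter on $\mathsf{E}(S)$ closed under the partial action of $G$ by conjugation, and $geg^{-1}$ lies in $F$ exactly when $e$ lies in the conjugate ultrafilter $g^{-1}Fg$.
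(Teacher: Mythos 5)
Your proposal is correct and follows the paper's own route: dualize the homeomorphism $\beta$ of Proposition~\ref{them:matui_main} under classical Stone duality to get $\gamma$, and read off (1) from $\beta(U_{\sigma(t)}) = U_{\sigma(\alpha(t))}$. The only (harmless) divergence is in (2): the paper first invokes (F1) to reduce to the case $e = \sigma(t)$ for an involution $t$ and then uses part (1) together with the conjugation-equivariance of $\sigma$, whereas you prove (2) for arbitrary $e$ directly from the identity $U_{geg^{-1}} = \{gFg^{-1} \colon F \in U_e\}$ and the intertwining property $\beta(gFg^{-1}) = \alpha(g)\beta(F)\alpha(g)^{-1}$ — a slightly more direct argument that avoids the (F1) reduction at this step.
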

\begin{proof} By Proposition~\ref{them:matui_main}, there exists a homeomorphism 
$\beta \colon \mathsf{X}(S) \rightarrow \mathsf{X}(T)$ of structure spaces such that  $\beta (gFg^{-1}) = \alpha (g) \beta (F) \alpha (g)^{-1}$ for every $F \in \mathsf{X}(S)$ and $g \in G$. 
In particular, $\beta$ maps clopen sets of $\mathsf{X}(S)$ to clopen sets of $\mathsf{X}(T)$.
We define $\gamma$ to be this function.
This clearly defines an isomorphism of Boolean algebras.
From the fact that $\beta (U_{\sigma (t)}) = U_{\sigma (\alpha (t))}$,
we have that $\gamma (\sigma (t)) = \sigma (\alpha (t))$, which proves (1).
By (F1), it is enough to prove (2) for the case where $e = \sigma (t)$ where $t \in G$ is an involution.
But this follows from 
$$\beta (U_{\sigma (gtg^{-1})}) = U_{\gamma (\sigma (gtg^{-1}))}$$
and properties.
\end{proof}

\begin{center}
{\bf Proof of Theorem~\ref{them:matui}}
\end{center}

Let $S$ and $T$ be simple Tarski monoids.
Let $G = \mathsf{Sym}(S)$ and $H = \mathsf{Sym}(T)$ or 
let
$G = \mathsf{U}(S)$ and $H = \mathsf{U}(T)$.
Let $\alpha \colon G \rightarrow H$ be an isomorphism of groups.
By Corollary~\ref{cor:cold}, there is an isomorphism of Boolean algebras $\gamma \colon \mathsf{E}(S) \rightarrow \mathsf{E}(T)$ such that
$\gamma (geg^{-1}) = \alpha (g) \gamma (e) \gamma (g)^{-1}$ for all $g \in G$ and $e \in \mathsf{E}(S)$.
By Proposition~\ref{prop:isomorphism} and Proposition~\ref{prop:dory}, there is therefore an isomorphism $\Theta \colon S \rightarrow T$
extending both $\alpha$ and $\gamma$. \\

We conclude with an observation.
There are striking parallels between the results of this paper and those to be found in \cite{HR} on AW*-algebras.


\end{document}